\documentclass{amsart}

\usepackage{amsmath,amsthm,amssymb,amsfonts,enumerate,color}
\usepackage{mathrsfs}
\usepackage{amstext,amsxtra}
\usepackage[margin=1.5in]{geometry}
\usepackage{graphicx}
\usepackage{float,appendix}

\usepackage{tikz}
\usetikzlibrary{matrix,arrows,calc,intersections,fit}
\usetikzlibrary{decorations.markings}
\usepackage{tikz-cd}
\usepackage{textcomp}

\usepackage[colorlinks,urlcolor=black,linkcolor=blue,citecolor=blue,hypertexnames=false]{hyperref}
\allowdisplaybreaks
\usepackage{pgf,tikz}
\usepackage{stmaryrd}
\usepackage{bm}

{\theoremstyle{plain}
\newtheorem{theorem}{Theorem}[section]
\newtheorem{proposition}[theorem]{Proposition}
\newtheorem{corollary}[theorem]{Corollary}
\newtheorem{lemma}[theorem]{Lemma}
\newtheorem{problem}[theorem]{Problem}}
{\theoremstyle{definition}

\newtheorem{definition}[theorem]{Definition}
}
{\theoremstyle{remark}
\newtheorem{remark}[theorem]{Remark}}

\newcommand{\mult}{\operatorname{mult}}
\newcommand{\aut}{\operatorname{Aut}}
\newcommand{\conj}{\operatorname{conj}}

\newcommand{\sym}{\operatorname{Sym}}
\newcommand{\symc}{\operatorname{Symc}}

\newcommand{\nsym}{\operatorname{Nsym}}
\newcommand{\val}{\operatorname{val}}
\newcommand{\vt}{\operatorname{Vert}}
\newcommand{\eg}{\operatorname{Edge}}

\newcommand{\zl}{\mathcal{Z}}
\newcommand{\ol}{\mathcal{O}}
\newcommand{\pl}{\mathcal{P}}

\newcommand{\rb}{\mathbb{R}}

\newcommand{~}{\quad}
\newcommand{\cb}{\mathbb{C}}
\newcommand{\nb}{\mathbb{N}}

\newcommand{\pb}{\mathbb{P}}

\newcommand{\undl}{\underline}

\newcommand{\w}{\omega}

\definecolor{cardinalred}{RGB}{140,21,21}
\definecolor{coolgray}{RGB}{77,79,83}
\definecolor{black}{RGB}{0,0,0}
\definecolor{beige}{RGB}{210,194,149}
\definecolor{darkbeige}{RGB}{179,153,93}
\definecolor{darkcardinal}{RGB}{94,48,50}
\definecolor{lightcardinal}{RGB}{141,60,30}
\definecolor{darkpurple}{RGB}{83,40,79}
\definecolor{darkcyan}{RGB}{0,124,146}
\definecolor{skyblue}{RGB}{0,152,219}
\definecolor{seablue}{RGB}{10,100,180}
\definecolor{darkblue}{RGB}{20,80,150}
\definecolor{treegreen}{RGB}{0,155,118}
\definecolor{darkorange}{RGB}{168,101,12}
\definecolor{beigegray}{RGB}{95,87,79}
\definecolor{boxgray}{RGB}{238,235,233}
\definecolor{footergray}{RGB}{199,209,197}

\begin{document}

\title[The uniform asymptotics]{The uniform asymptotics for real double Hurwitz numbers with triple ramification II: lower bounds and asymptotics}

\author{Yanqiao Ding}

\author{Kui Li}

\author{Huan Liu}

\author{Dongfeng Yan}

\address{School of Mathematics and Statistics, Zhengzhou University, Zhengzhou, 450001, China}

\email{yqding@zzu.edu.cn}


\email{likui@zzu.edu.cn}


\email{liuhuan@zzu.edu.cn}


\email{yandongfeng@zzu.edu.cn}

\subjclass[2020]{Primary 14N10; Secondary 14P05, 14T15, 14H30}

\keywords{Real enumerative geometry, real Hurwitz numbers, tropical geometry, uniform asymptotics.}

\date{\today}

\begin{abstract}
This is the second of two papers on the uniform asymptotics for real double Hurwitz numbers with triple ramification.
Using the modified tropical correspondence theorem established in the first paper of this series,
we introduce a combinatorial invariant that serves as a lower bound
for real double Hurwitz numbers with triple ramification. 
We derive a uniform lower bound for the large-degree and large-genus logarithmic asymptotics of
these combinatorial invariants.
This uniform lower bound yields the following results:
\begin{itemize}
\item[(1)] We establish a uniform lower bound for the large-degree and large-genus logarithmic asymptotics of 
real double Hurwitz numbers with triple ramification and their complex analogues.
In particular, we provide a partial answer to an open question proposed by Dubrovin, Yang and Zagier
on the uniform bound for simple Hurwitz numbers.
\item[(2)] We prove logarithmic equivalence between real double Hurwitz numbers with triple ramification
and their complex analogues as the degree tends to infinity and only simple branch points are added.
\item[(3)] As the genus tends to infinity and only simple branch points are added, 
we show that the logarithms of real double Hurwitz numbers with triple ramification and their complex analogues 
are of the same order.
\end{itemize}
\end{abstract}

\maketitle


\section{Introduction}
Hurwitz numbers count the number of ramified coverings of a Riemann surface 
by other Riemann surfaces, with prescribed ramification profiles over a given set of branch points.
These numbers have profound connections to combinatorics, the moduli space of algebraic curves, 
mathematical physics and so on \cite{okounkov-2000,elsv-2001, gjv-2005, op-2006,hurwitz-1891,bbm-2011, cjm-2010}.
Through a deep understanding of these interrelations, many algebraic structures of double Hurwitz numbers have been uncovered. 
For instance, the following results have been established: chamber structures and wall-crossing formulas 
for double Hurwitz numbers \cite{ssv-2008, cjm-2011};
relations linking double Hurwitz numbers to intersection numbers on moduli spaces \cite{dl-2022};
and the polynomiality structure of 
double Hurwitz numbers \cite{bdklm-2022}.

A series of remarkable works \cite{Aggarwal-2021,dgzz-2020,dgzz-2021,dgzz-2022,gy-2024}
has achieved substantial progress in understanding the large-genus asymptotics of 
intersection numbers over the moduli spaces of curves.
By contrast, the asymptotics of double Hurwitz numbers remain far from being fully understood.
In particular, the asymptotics of double Hurwitz numbers as both the degree and the genus tend to infinity, 
which are called {\it the uniform asymptotics}, have been rarely explored.

The asymptotics for simple Hurwitz numbers as the genus or degree tends to infinity 
separately have been established in \cite{hurwitz-1891,dyz-2017,yang-2025,cgmps-2007},
and the large-degree asymptotics of double Hurwitz numbers have been derived in \cite{li-2024}.
On page 616 of the paper \cite{dyz-2017}, Dubrovin, Yang and Zagier posed an open question
that is relevant to our results.
\begin{problem}[Dubrovin, Yang and Zagier]
\label{prob:1}
How can one find a uniform bound for simple Hurwitz numbers in terms of elementary functions?
\end{problem}

Budzinski and Louf \cite{bl-2021} obtained the uniform asymptotics,
up to subexponential factors, for the number of triangulations as both the size and the genus tend to infinity.
Tropical computations of Hurwitz numbers \cite{bbm-2011,cjm-2010,gpmr-2015,mr-2015}
provide a framework for studying the asymptotics of Hurwitz numbers.

The goal of this paper is to study the uniform asymptotics of real double Hurwitz numbers 
with triple ramification and their complex counterparts.
Real double Hurwitz numbers with triple ramification
enumerate the number of real ramified coverings
of $\cb\pb^1$ by real Riemann surfaces, where the coverings have arbitrary ramification types 
over $0$ and $\infty$, and triple or simple ramification over all other branch points.
These numbers depend on the distribution of the real branch points.
This phenomenon also arises in the
enumerative problem of counting real rational curves \cite{wel2005a,wel2005b,iks2014rw,iks2014,shustin2015}. 
Consequently,
it is crucial to find invariant lower bounds for the number of real solutions.
The fundamental question addressed in this study of real Hurwitz numbers is as follows. 
\begin{problem}
Does there exist an invariant that can yield precise estimates of real double Hurwitz numbers with triple ramification?
\end{problem}

Let $l(\lambda)$ denote the {\it length} of a partition $\lambda$, \textit{i.e.}, the number of entries in the partition $\lambda$.
The sum of all entries in the partition $\lambda$ is denoted by $|\lambda|$.
Given two partitions $\lambda$ and $\mu$ satisfying $|\lambda|=|\mu|$,
we choose two non-negative integers $s,t$ such that $2s+t=l(\lambda)+l(\mu)+2g-2$.
We use the notation $H^\rb_g(\lambda,\mu;\Lambda^-_{s,t},\Lambda^+_{s,t})$ to denote
the real double Hurwitz number with triple ramification defined in Definition \ref{def:RDH}.
Here, $\Lambda^-_{s,t}$ and $\Lambda^+_{s,t}$ denote the sequences of ramification profiles
over the negative and positive branch points, respectively.
In particular, $s$ is the number of triple branch points in $\rb\pb^1\setminus\{0,\infty\}$,
and $t$ is the number of simple branch points.
When $s=0$, the number $H^\rb_g(\lambda,\mu;\Lambda^-_{0,t},\Lambda^+_{0,t})$ is the ordinary
real double Hurwitz number.
In the case that $t=0$, the number $H^\rb_g(\lambda,\mu;\Lambda^-_{s,0},\Lambda^+_{s,0})$ is called the 
real double Hurwitz number with only triple ramification.

We denote by $|\Lambda^-_{s,t}|$ the number of partitions in the sequence $\Lambda^-_{s,t}$.
Note that the ordinary real double Hurwitz number $H^\rb_g(\lambda,\mu;\Lambda^-_{0,t},\Lambda^+_{0,t})$
depends only on the number $|\Lambda^-_{s,t}|$ \cite{mr-2015}.
However, the number $H^\rb_g(\lambda,\mu;\Lambda^-_{s,t},\Lambda^+_{s,t})$ 
depends not only on the numbers $|\Lambda^-_{s,t}|, |\Lambda^+_{s,t}|$,
but also on the arrangement of the ramification profiles in the sequences $\Lambda^-_{s,t},\Lambda^+_{s,t}$.
This constitutes the key difference between ordinary real double Hurwitz numbers
and their counterparts with triple ramification.

In \cite{mr-2015}, Markwig and Rau expressed 
real double Hurwitz numbers as a weighted count of coloured tropical covers, 
and distinguished positive and negative real branch points via two distinct colouring rules for even edges (see also \cite{gpmr-2015}).
Rau \cite{rau2019} introduced {\it zigzag covers}, in which even edges can be coloured according to arbitrary rules.
The number of zigzag covers provides an optimal lower bound for
$H^\rb_g(\lambda,\mu;\Lambda^-_{0,t},\Lambda^+_{0,t})$, 
and Rau \cite{rau2019} showed that the complex double Hurwitz numbers and their real counterparts are
logarithmically equivalent when the zigzag number is nonvanishing.
However, this framework is not enough for analyzing
real double Hurwitz numbers with triple ramification, for two reasons.
First, real double Hurwitz numbers with triple ramification 
depend on the arrangement of the branch points.
Second, according to Markwig and Rau's tropical computation \cite{mr-2015}, 
real double Hurwitz numbers with triple ramification
can be expressed as a weighted count of real tropical covers from real tropical curves containing 
$4$-valent vertices or genus one $2$-valent vertices.
The presence of these vertices complicates the characterization of the sign of a real branch point.
More exactly, we have to use two factors, the colouring of even edges and the edge weights, 
to distinguish between positive and negative real branch points.
This makes the analysis of uniform asymptotics for 
$H^\rb_g(\lambda,\mu;\Lambda^-_{s,t},\Lambda^+_{s,t})$ extremely challenging.

In the first paper \cite{dlly-23} of this series, 
we employ simple resolution to establish a modified tropical correspondence theorem.
In particular, we express the real double Hurwitz numbers with triple ramification
as a weighted count of coloured tropical covers, 
where positive and negative real branch points characterized by two colouring rules for even edges \cite{dlly-23}.
In this paper, we apply the modified tropical correspondence theorem \cite[Theorem 5.10]{dlly-23} to investigate
the lower bounds and asymptotics of real double Hurwitz numbers with triple ramification.
We first introduce a generalized version $Z_g(\lambda,\mu; \Lambda_{s,t})$ of Rau's zigzag number \cite{rau2019}, 
where $\Lambda_{s,t}=(\Lambda^-_{s,t},\Lambda^+_{s,t})$.
Note that $Z_g(\lambda,\mu; \Lambda_{s,t})$ depends on the distribution of triple and simple branch points.
We then derive a gluing formula (see Lemma \ref{lem:glue1}) for this combinatorial invariant 
to analyse its logarithmic asymptotics.
When a triple branch point passes through a simple branch point along the real line,
we analyse the wall-crossing phenomenon associated with generalized zigzag numbers and 
introduce {\it proper zigzag numbers} $Z_g(\lambda,\mu;s,t)$ (see Definition \ref{def:ref-zig-num}).
Finally, we prove that $Z_g(\lambda,\mu;s,t)$ is independent of 
the distribution of triple and simple branch points, and
serves as a lower bound for the generalized zigzag numbers $Z_g(\lambda,\mu; \Lambda_{s,t})$ (see Theorem \ref{thm:lower-bounds}).

Let $H^\cb_g(\lambda,\mu;s,t)$ denote the complex counterparts of $H^\rb_g(\lambda,\mu;\Lambda^-_{s,t},\Lambda^+_{s,t})$.
Under the assumption that $2s+t>0$ and $\{\lambda,\mu\}\not\subset\{(2k),(k,k)\}$, Proposition \ref{thm:low-bou1} yields that
$$
Z_g(\lambda,\mu;s,t)\leq H^\rb_g(\lambda,\mu;\Lambda^-_{s,t},\Lambda^+_{s,t})\leq H^\cb_g(\lambda,\mu;s,t).
$$
Thus, the proper zigzag number $Z_g(\lambda,\mu;s,t)$ is an invaraint lower bound for 
$H^\rb_g(\lambda,\mu;\Lambda^-_{s,t},\Lambda^+_{s,t})$.
In the following, we always assume that $2s+t>0$ and $\{\lambda,\mu\}\not\subset\{(2k),(k,k)\}$.
We define
$$
\begin{aligned}
z_{\lambda,\mu}(g,h,m)=Z_g((\lambda,1^{2c+h+2m-1}),(\mu,1^{2c+h+2m-1});s(h),t(m)),\\
h_{\lambda,\mu}^\cb(g,h,m)=H^\cb_g((\lambda,1^{2c+h+2m-1}),(\mu,1^{2c+h+2m-1});s(h),t(m)),
\end{aligned}
$$
where $s(h)=\frac{l(\lambda')+l(\mu')}{2}+c+h$, $t(m)=l(\lambda_e)+l(\mu_e)+2g-2+4m+2c$,
and $c$ is a positive constant depending on $\lambda$ and $\mu$.
Note that $h,m$ are used to indicate the number of triple branch points and simple branch points, respectively.
The notation $(1^m)$ stands for a partition consisting of $m$ ones.
Our first main result is given in the following theorem.

\begin{theorem}
\label{thm:main1}
Let $\lambda$ and $\mu$ be two partitions with $|\lambda|=|\mu|$.
Suppose that there exists an odd integer $o\neq1$ that appears at least twice in $\lambda$
and there exists an even integer $e\geq2o$ in $\mu$.
Then the uniform logarithmic asymptotics for proper zigzag numbers $z_{\lambda,\mu}(g,h,m)$ is bounded below by 
$\frac{h}{3}\ln h+2g\ln m+4m\ln m$ as $m,h,g\to\infty$, that is,
$$
\liminf_{g,m,h\to\infty}\frac{\ln z_{\lambda,\mu}(g,h,m)}{\frac{h}{3}\ln h+2g\ln m+4m\ln m}
\geq1.
$$
\end{theorem}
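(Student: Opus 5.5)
The plan is to obtain the lower bound from explicit constructions of proper zigzag covers, glued together with Lemma~\ref{lem:glue1}. By Theorem~\ref{thm:lower-bounds}, $z_{\lambda,\mu}(g,h,m)$ does not depend on how the triple and simple branch points are arranged. We may therefore fix a convenient arrangement: all $s(h)$ triple branch points first (leftmost), followed by all $t(m)$ simple branch points. We will then exhibit one explicit family of zigzag-type covers and bound its weighted count from below by a product of three factors. These factors correspond to the three terms $\frac h3\ln h$, $2g\ln m$ and $4m\ln m$.

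\textbf{Step 1 (the triple part).} The hypothesis supplies an odd part $o\neq 1$ occurring twice in $\lambda$ and an even part $e\ge 2o$ in $\mu$. We use these to build a ``triple engine''. Near the left end, the two odd edges of weight $o$ are joined at a $4$-valent vertex, or at a genus-one $2$-valent vertex, so that they carry a triple branch point. The inequality $e\ge 2o$ ensures that an even edge of weight $\ge 2o$ remains available to absorb the resulting weight and to keep the colouring rules for positive and negative points consistent. Each of the $c+h$ additional triple points then combines one of the added ends of weight $1$ with this engine. This yields a product of edge-weight multiplicities, or equivalently a count of choices of which weight-one ends are used at each stage. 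We aim to bound this product below by something like $\prod_{j\le h/3} j$, which gives $\exp\bigl(\tfrac h3\ln h(1+o(1))\bigr)$. The factor $\tfrac13$ should come from the fact that only about a third of the triple vertices admit free combinatorial choices, since the others are forced to restore the zigzag shape.

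\textbf{Step 2 (the simple part).} After the triple points, there remain $t(m)=l(\lambda_e)+l(\mu_e)+2g-2+4m+2c$ simple branch points and roughly $2m$ extra ends of weight $1$ on each side. Following Rau's zigzag covers \cite{rau2019}, we arrange these into a zigzag together with $g$ loops. Each loop consists of two parallel edges of weights $a$ and $b$ with $a+b$ of order $m$. The weights can be made to grow linearly because the weight-one ends accumulate. The count of such covers is the product of edge weights together with the positions of the $g$ loops among the $\sim 4m$ simple points. This should give at least $m^{2g}$ from the genus loops, since each loop contributes two even-edge or weight factors of size $\Theta(m)$. The ordering and weight choices over the $4m$ steps should contribute $\exp(4m\ln m(1+o(1)))$, for instance via factors behaving like $(m!)^4$ up to exponential corrections. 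Lemma~\ref{lem:glue1} then multiplies the counts for the triple block, the simple block and a fixed bounded core coming from $\lambda,\mu$. That core contributes only a positive constant, which is nonzero by the hypothesis on $o$ and $e$.

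\textbf{Conclusion and main obstacle.} Taking logarithms, we get $\ln z\ge \frac h3\ln h+2g\ln m+4m\ln m-O(h+g+m)$. Dividing by the denominator and letting $g,m,h\to\infty$ gives a liminf of at least $1$; the linear error terms are negligible because the denominator is superlinear. The hardest step is Step 2 in the regime where $g$ is large compared with $m$. There, having $g$ loops whose weights are all of order $m$ requires enough weight-one ends, and the parity and colouring constraints of proper zigzag covers must hold at every even edge. I would first try interleaving the loops uniformly among the simple points, so that each loop lies between two increments of weight. I would then check that the colouring condition of Definition~\ref{def:ref-zig-num} is satisfied, by using the parity supplied by the even edge $e$ to fix the colouring once and for all.
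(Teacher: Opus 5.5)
\textbf{Verdict: there is a genuine gap.} Your top-level architecture matches the paper's: work on the trivial distribution, glue a triple-only block, a bounded connector and a simple-only block with Lemma~\ref{lem:glue1}, then apply Stirling. But the mechanism you use for growth inside each block is wrong, so neither logarithmic term is actually obtained. (Minor: fixing the arrangement needs no theorem. $Z_g(\lambda,\mu;s,t)$ is \emph{defined} on the trivial distribution, and Theorem~\ref{thm:lower-bounds} only compares it with other arrangements.)

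The number $z_{\lambda,\mu}(g,h,m)$ is the plain cardinality of a set of isomorphism classes of properly mixed generalized zigzag covers (Definition~\ref{def:ref-zig-num}). No edge weights enter it. Since ends are unlabelled, ``choices of which weight-one ends are used'' do not give distinct covers either. So the factors you expect from ``products of edge-weight multiplicities'' in Step~1, and from loops ``contributing weight factors of size $\Theta(m)$'' in Step~2, are simply not there.

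The Step~2 loops are also not admissible. Every trivalent vertex has an even edge. $S$ contains no even edge except contractible ones (Definition~\ref{def:gen-zig-cov}(2)), and there are no contractible edges among the simple points. Hence each vertex of $S$ in the simple block has $S$-degree at most $2$. A cycle of $S$ among the simple points would therefore be a whole connected component of $S$. That is impossible, because $S$ is connected and contains the contractible edges of the triple block. So all genus there must sit in symmetric cycles of odd weight inside tails, and each such cycle contributes nothing to the count through its weight.

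The missing idea is that the count comes from many pairwise non-isomorphic covers. These are produced by distinct gluing patterns, or by distinct linear extensions of the orientation order on explicit source curves.
\begin{itemize}
\item For the triple block (Theorem~\ref{thm:asymp1} and Lemma~\ref{lem:asymp1}), about $h/3$ copies of a fixed cover of type $(0,(1^4),(1^4))$ are glued in a pattern determined by a permutation. This gives $\lfloor\frac{h-n_0-1}{3}\rfloor!$ covers. Each copy uses three triple pairs, and that is where the $\frac13$ really comes from.
\item For the simple block (Lemma~\ref{lem:asymp2}), one fixes a weight-one string with $m$ inward and $m$ outward $(1,1)$-tails. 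One then distributes $g$ weight-one symmetric cycles as evenly as possible over the inward tails. The four antichains of $m$ fork and bend vertices give $(m!)^4$. Interleaving the $m$ chains of cycle vertices gives $(2g)!/\bigl((2\lfloor g/m\rfloor+2)!\bigr)^{m}$. Lemma~\ref{lem:limit1} turns the product into $2g\ln m+4m\ln m$ uniformly.
\end{itemize}

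You also never build the properly mixed structure, and it is not a colouring condition. Definition~\ref{def:ref-zig-cov} requires an odd bridge $E_c$ separating the triple and simple parts. $E_c$ must start at the first triple pair, with local picture (xii) of Figure~\ref{fig:pair-EC} and two equal odd incoming ends. It must end at the last simple vertex $v_2$, which must have two odd incoming edges. This is exactly where the hypotheses enter. The connector of type $(0,(o,o),(2o-1,1))$ realises (xii) and supplies $E_c$ of weight $2o-1$. The condition $e\ge 2o$ lets $v_2$ have outgoing even end $e$ with odd incoming edges $2o-1$ and $e-2o+1\ge 1$.
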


To consider the uniform asymptotics for real double Hurwitz numbers, we put
$$
\begin{aligned}
z_{\lambda,\mu}(g,m)&=Z_g((\lambda,1^{2m}),(\mu,1^{2m}); \Lambda_{0,t}),\\
z_{g,\lambda,\mu}(h)&=Z_g((\lambda,(1^h)),(\mu,(1^h));\Lambda_{s,0}),\\
h^\cb_{\lambda,\mu}(g,m)&=H_g^\cb((\lambda,1^{2m}),(\mu,1^{2m})),
\end{aligned}
$$
where $t=l(\lambda)+l(\mu)+2g-2+4m$, and $2s=l(\lambda)+l(\mu)+2g-2+2h$. 
Note that $Z_g((\lambda,1^{2m}),(\mu,1^{2m}); \Lambda_{0,t})$
is a variation of Rau's zigzag numbers (see Remark \ref{rmk:gzc-vs-zc}).
The following theorem is our second main result on uniform aysmptotics.

\begin{theorem}
\label{thm:main2}
Let $\lambda,\mu$ be two partitions with $|\lambda|=|\mu|$.
If the sum of the number of odd integers that appear odd number of times in $\lambda$
and the number of odd integers that appear odd number of times in $\mu$ is at most $2$,
the uniform logarithmic asymptotics for zigzag numbers $z_{\lambda,\mu}(g,m)$ is at least $2g\ln m+4m\ln m$
as $m,g\to\infty$, that is,
$$
\liminf_{g,m\to\infty}\frac{\ln z_{\lambda,\mu}(g,m)}{2g\ln m+4m\ln m}
\geq1.
$$
\end{theorem}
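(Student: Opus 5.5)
The plan is to bound $z_{\lambda,\mu}(g,m)$ from below by an explicit count of zigzag-type tropical covers, following Rau's strategy for ordinary real double Hurwitz numbers, and then to estimate that count with Stirling's formula. With $s=0$, only simple branch points occur, and the generalized zigzag number $Z_g((\lambda,1^{2m}),(\mu,1^{2m});\Lambda_{0,t})$ counts tropical covers whose even edges carry the colourings from the modified correspondence theorem \cite[Theorem 5.10]{dlly-23}. I will not work with the whole count. Instead I will build a large, explicit family of covers, each contributing a positive weight, and count that family.

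\textbf{Step 1: decompose via the gluing formula.} I split the source curve into three pieces, using Lemma \ref{lem:glue1} for the gluing multiplicities.
\begin{itemize}
\item The \emph{core} is a fixed piece depending only on $\lambda,\mu$. The parity hypothesis is used here: the odd parts of $\lambda$ and $\mu$ appearing an odd number of times, at most $2$ in total, can be paired off with each other. All remaining odd parts come in equal pairs and can be merged two at a time into even edges. This produces a zigzag-type core whose zigzag number is nonzero, exactly as in Rau's nonvanishing criterion.
\item The \emph{genus part} consists of $g$ loops, each formed by an even edge of weight $2$ splitting and rejoining.
\item The \emph{$1^{2m}$ part} consists of $2m$ ends of weight one on each side. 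They are absorbed pairwise: two weight-one ends merge into a weight-$2$ edge, which later splits again.
\end{itemize}
The core contributes a positive constant, independent of $g$ and $m$, once $g,m$ are large.

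\textbf{Step 2: count orderings.} The positions of the $t=l(\lambda)+l(\mu)+2g-2+4m$ branch points are fixed on the real line. The freedom comes from how the roughly $2m$ merge vertices and $2m$ split vertices, and the $2g$ loop vertices, are assigned to these branch points, together with which weight-one strands are paired. I will restrict to orderings that respect the colouring rules, for instance all merges on the negative side and all splits on the positive side, or alternating, chosen so that every positive/negative branch point receives a vertex compatible with its colouring. Pairing the $2m$ incoming ones among themselves and interleaving with the outgoing ones should give at least on the order of $(2m)!\,(2m)!/C^m$ admissible configurations. Each genus loop can be inserted into one of order $m$ weight-$2$ edges, which gives a factor of order $m^{2g}/C^g$ (one factor $m^2$ per loop, from the choice of edge and position). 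This yields
$$
\ln z_{\lambda,\mu}(g,m)\ \ge\ 2\ln((2m)!)+2g\ln m-O(g+m)=4m\ln m+2g\ln m-O(g+m).
$$

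\textbf{Step 3: take the limit.} Dividing by $2g\ln m+4m\ln m$ gives the $\liminf\ge1$. The $O(g+m)$ error is $o(g\ln m+m\ln m)$ since $m\to\infty$.

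\textbf{Main obstacle.} The hardest part is Step 2 in the regime where $g$ is much larger than $m$. There I must check that the $m^{2g}$ factor survives: the loops must be attachable to many distinct edges while the colouring constraint at each branch point still admits a positive weight. A related issue is that Rau-type zigzag covers have multiplicities that could cancel or vanish, so I must make sure the constructed family gets positive, non-cancelling weights. If inserting loops at arbitrary positions is blocked by the colourings, my first fallback is to place the loops in a block on a single side and to count only the interleavings of the loop vertices with the merge/split vertices. That count is a binomial $\binom{4m+2g}{2g}$ times edge choices, which should still give $2g\ln m$ up to lower-order terms.
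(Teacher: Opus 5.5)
Your outline is the paper's. A fixed core from Rau's nonvanishing construction is glued via Lemma \ref{lem:glue1} to an unmixed cover of type $(g,(1^{2m'+1}),(1^{2m'+1}))$. Its $m'$ inward and $m'$ outward tails of type $1,1$ carry weight-one symmetric cycles. You then count linear extensions and apply Stirling.

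The $4m\ln m$ term is fine once it is read as the $(m!)^4$ orders of the four antichains: inward forks, left bends, right bends and outward forks. Pairing unlabelled weight-one ends contributes nothing, since leaves are not marked.

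The gap is the $2g\ln m$ term in the regime $g\gg m$, which you flag but do not resolve.
\begin{itemize}
\item Choosing an edge for each loop does not give a factor $m$ per loop. The loops are indistinguishable, so this yields at most $\binom{g+2m}{2m}$ source curves. Their logarithm is at most $2m\bigl(1+\ln(1+\frac{g}{2m})\bigr)$, which is $o(g\ln m)$ when $g/m\to\infty$.
\item Your fallback fails for the same reason: $\ln\binom{4m+2g}{2g}\le 4m\bigl(1+\ln(1+\frac{g}{2m})\bigr)$ is also $o(g\ln m)$. Discarding the relative order of the loop vertices among themselves throws away exactly the factor that carries $2g\ln m$.
\end{itemize}

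The missing idea is the one in Lemma \ref{lem:asymp2}. The $2k_i$ cycle vertices on the $i$-th tail form a chain, while cycle vertices on different tails are incomparable in the partial order. Spread the cycles evenly, $\lfloor g/m\rfloor$ or $\lfloor g/m\rfloor+1$ per inward tail. Then the shuffles of these chains give $\frac{(2g)!}{\prod_i(2k_i)!}\ge\frac{(2g)!}{((2\lfloor g/m\rfloor+2)!)^m}$ orders on one fixed source curve. Stirling, as in Lemma \ref{lem:limit1}, shows its logarithm is at least $2g\ln m-O(g+m)$. The even spread is essential: all cycles on one tail give a single chain and essentially one order.

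Equivalently, you can sum over all distributions. The placements are then words of length $2g$ in the $m$ tail labels in which every letter occurs an even number of times. There are at least $2m^{2g}/2^m$ of these, and this is what your ``$m^2$ per loop'' has to mean.

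Finally, the colouring and cancellation worries are not obstacles. $Z_g$ counts uncoloured generalized zigzag covers, each with weight $2^{m(\varphi)}\ge1$, independently of the signed splitting (Remark \ref{rem:inv-gen-zig-num}, Proposition \ref{prop:colour-GZC}). So no orderings need to be excluded. The only real constraint is that, for $s=0$, the loops must lie on tails as odd-weight symmetric cycles, because $S$ contains no even edges.
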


Since proper zigzag numbers (resp. zigzag numbers) serve as the lower bounds for 
real double Hurwitz numbers with triple ramification
(resp. real double Hurwitz numbers), Theorem \ref{thm:main1} and 
Theorem \ref{thm:main2} imply 
the uniform asymptotics for their complex counterparts.
\begin{corollary}
\label{coro:main1}
Let $\lambda,\mu$ be two partitions with $|\lambda|=|\mu|$.
Then the following statements hold.
\begin{enumerate}
\item[$(1)$] If $\lambda,\mu$ satisfy the conditions in Theorem $\ref{thm:main1}$,
the uniform logarithmic asymptotics for $h^\cb_{\lambda,\mu}(g,h,m)$ is at least $\frac{h}{3}\ln h+2g\ln m+4m\ln m$
as $m,h,g\to\infty$.
\item[$(2)$] If $\lambda,\mu$ satisfy the conditions in Theorem $\ref{thm:main2}$,
the uniform logarithmic asymptotics for $h^\cb_{\lambda,\mu}(g,m)$ is at least $2g\ln m+4m\ln m$
as $m,g\to\infty$.  
\end{enumerate}
\end{corollary}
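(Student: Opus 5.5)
The plan is to chain the two main theorems with the sandwich inequality stated after Theorem \ref{thm:lower-bounds} (Proposition \ref{thm:low-bou1}), and to pass to logarithms. In both parts the only work is to check that the comparison inequalities hold for the specific partitions and branch data used in the definitions. Since the normalising denominators are positive for large $g,m,h$, a pointwise lower bound on the numerator gives the same inequality for the $\liminf$.

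\textbf{Part (1).} For every admissible $(g,h,m)$ I take the partitions $(\lambda,1^{2c+h+2m-1})$ and $(\mu,1^{2c+h+2m-1})$, with $s=s(h)$ and $t=t(m)$.

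1. I first check the standing hypotheses of Proposition \ref{thm:low-bou1}. We have $2s+t>0$ because $c>0$. Also $\{\lambda',\mu'\}\not\subset\{(2k),(k,k)\}$, since the enlarged partitions contain many parts equal to $1$ once $h+2m$ is large.
2. I then fix any real arrangement $\Lambda_{s,t}$ of the branch points and apply Proposition \ref{thm:low-bou1}. This gives
$$z_{\lambda,\mu}(g,h,m)\le H^\rb_g\le h^\cb_{\lambda,\mu}(g,h,m).$$
3. Taking logarithms, dividing by $\frac h3\ln h+2g\ln m+4m\ln m$, and applying Theorem \ref{thm:main1} yields $\liminf\ge1$ for $h^\cb_{\lambda,\mu}(g,h,m)$.

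\textbf{Part (2).} Here the triple points are absent, so $s=0$.

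1. I need the analogue of the sandwich for $z_{\lambda,\mu}(g,m)=Z_g(\cdot;\Lambda_{0,t})$, namely
$$Z_g(\cdot;\Lambda_{0,t})\le H^\rb_g(\cdot;\Lambda^-_{0,t},\Lambda^+_{0,t})\le H^\cb_g.$$
2. For the left inequality I use that generalized zigzag covers form a subset of the coloured tropical covers counted by the modified correspondence theorem \cite[Theorem 5.10]{dlly-23}, each counted with positive weight at least $1$. Equivalently, I can invoke Remark \ref{rmk:gzc-vs-zc} and Rau's bound \cite{rau2019}.
3. The right inequality holds because each real cover is a complex cover, and Hurwitz numbers weight covers by $1/|\aut|$ consistently on both sides.
4. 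With the sandwich in hand, Theorem \ref{thm:main2} finishes part (2) in the same way as part (1).

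The main obstacle is verifying these comparison inequalities in exactly the normalisation used for $z_{\lambda,\mu}(g,m)$. In particular I must make sure the case $s=0$ is covered by Proposition \ref{thm:low-bou1} or by its proof. The excluded degenerate case $\{\lambda,\mu\}\subset\{(2k),(k,k)\}$ is harmless, since the added parts $1^{2m}$ rule it out for $m\ge1$.
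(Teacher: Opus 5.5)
Your proposal is correct and is the paper's own argument. The paper likewise chains $z\le H^\rb\le H^\cb$ with Theorems \ref{thm:main1} and \ref{thm:main2} and takes logarithms. For part (1) this lower bound is Theorem \ref{thm:lower-bounds} composed with Proposition \ref{thm:low-bou1}; for part (2) Proposition \ref{thm:low-bou1} applies verbatim with $s=0$, so your worry there is unfounded.

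A few side remarks are off but do not affect the main line. $2s+t>0$ holds simply because $s(h)\ge h$, not because $c>0$; the paper's proof of Theorem \ref{thm:asymp2} allows $c=0$. Your alternative justifications should be dropped in favour of citing Proposition \ref{thm:low-bou1}. Rau's bound is not equivalent here, since generalized zigzag covers with $s=0$ may carry even symmetric forks that Rau's zigzag covers exclude. The ``consistent $1/|\aut|$ weights'' argument also fails, because one complex cover can carry several non-isomorphic real structures.
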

From Corollary \ref{coro:main1}, we obtain a uniform bound for logarithmic asymptotics of simple Hurwitz numbers.
This provides a partial answer to the question posed by Dubrovin, Yang and Zagier (see Problem \ref{prob:1}).
Another interesting consequence of Theorem \ref{thm:main1} is the logarithmic equivalence between
real double Hurwitz numbers with triple ramification and their complex analogues,
as the degree tends to infinity and only simple branch points are added.

\begin{theorem}
\label{coro:main2}
Let $\lambda,\mu$ be two partitions with $|\lambda|=|\mu|$.
Suppose that $\lambda,\mu$ satisfy the conditions in Theorem $\ref{thm:main1}$.
Then there exists an integer $c_0$ depending on $\lambda$ and $\mu$
such that if $h>c_0$, $g\geq0$ are fixed, we have
$$
\ln z_{\lambda,\mu}(g,h,m)\sim 4m\ln m\sim\ln h^\cb_{\lambda,\mu}(g,h,m), \text{ as } m\to\infty.
$$
\end{theorem}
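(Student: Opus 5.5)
We will prove Theorem \ref{coro:main2} by sandwiching: a lower bound for $\ln z_{\lambda,\mu}(g,h,m)$ and a matching upper bound for $\ln h^\cb_{\lambda,\mu}(g,h,m)$. The middle term is handled by Proposition \ref{thm:low-bou1}, which gives $z_{\lambda,\mu}(g,h,m)\le h^\cb_{\lambda,\mu}(g,h,m)$ whenever the hypotheses $2s+t>0$ and $\{\lambda,\mu\}\not\subset\{(2k),(k,k)\}$ hold. Both hypotheses are automatic in our range. So it suffices to prove
$$
\liminf_{m\to\infty}\frac{\ln z_{\lambda,\mu}(g,h,m)}{4m\ln m}\ge 1,\qquad \limsup_{m\to\infty}\frac{\ln h^\cb_{\lambda,\mu}(g,h,m)}{4m\ln m}\le 1 .
$$

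\textbf{Lower bound.} I would not simply specialize Theorem \ref{thm:main1}, since that statement requires $g,h\to\infty$ simultaneously. Instead I would reuse its proof. That proof presumably uses the gluing formula of Lemma \ref{lem:glue1} to split a proper zigzag cover into a fixed core piece and a long chain of elementary pieces. The core carries $\lambda,\mu$, the $s(h)$ triple points and the genus. Each elementary piece adds ones to both sides together with four simple branch points.

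For fixed $g$ and $h$, the core contributes a constant factor $C(g,h)$. We need $C(g,h)>0$, and this is where $c_0$ enters. The hypotheses of Theorem \ref{thm:main1} (an odd $o\ne 1$ appearing twice in $\lambda$, and an even $e\ge 2o$ in $\mu$) are what let one build a nonzero proper zigzag core containing the required triple points. Choosing $h>c_0$ guarantees that enough triple points are present for such a core to exist.

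The chain then contributes a count of order $\prod_{j\le m}(\text{const}\cdot j)^{4}$, coming from the choices of attaching the $4m$ new simple branch points along edges of growing multiplicity or number. This gives $\ln z\ge 4m\ln m-O(m)$.

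\textbf{Upper bound.} Adding $2m$ ones to each of $\lambda$ and $\mu$ gives degree $d=d_0+2m$ with $t=4m+O(1)$ simple branch points, while $s$ is fixed. I would bound the complex count by a monodromy count. A cover is determined by choosing a permutation for each branch point: $\le d^2$ choices per simple point and $\le d^3$ per triple point. We then divide by $d!$, and the count is at most $d!$ times the number of choices of the permutations over $0$ and $\infty$. Alternatively one can bound it by the number of tropical covers times multiplicities, or by Hurwitz's formula for the simple case. The result is
$$
\ln h^\cb\le 2t\ln d+3s\ln d+O(d\ln d)-\ln d!\,.
$$
Since $d\sim 2m$, $2t\ln d\approx 8m\ln m$ and $\ln d!\approx 2m\ln m$, and the counts over $0,\infty$ contribute about $2\ln d!$ with the $1^{2m}$ parts (in fact the centralizer size of the cycle types cancels). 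This heuristic must be made sharp, and doing so is the main obstacle. The cleanest route is the character formula
$$
H=\sum_{\chi}\frac{\chi(\lambda)\chi(\mu)}{z_\lambda z_\mu}\,f_2(\chi)^{t}f_3(\chi)^{s},
$$
using $|f_2(\chi)|\le \binom d2$ and $|f_3(\chi)|\le d^3$, the bound $\sum_\chi|\chi(\lambda)\chi(\mu)|\le \sqrt{z_\lambda z_\mu}$ (by Cauchy--Schwarz and column orthogonality), and $z_{(\lambda,1^{2m})}\ge (2m)!$. This yields
$$
\ln h^\cb\le t\ln\binom d2-\ln(2m)!+O(\ln m)=8m\ln m-2m\ln m+\dots
$$
That is $6m\ln m$, which overshoots. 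So the crude bound fails, and one needs $\chi(\lambda)$ to be supported on characters with small $f_2$. In practice I expect the paper instead uses known large-degree asymptotics of double Hurwitz numbers \cite{li-2024}, or Rau's comparison \cite{rau2019}, giving $\ln H^\cb\sim t\ln d$ up to lower order with $t\approx 4m$. I would import that result, with the fixed $s$ triple points treated as collisions of pairs of simple points that change the count by at most a polynomial factor in $d$.
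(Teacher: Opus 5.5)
Your proposal follows the paper's proof. The lower bound is Theorem~\ref{thm:asymp2} combined with Lemma~\ref{lem:asymp2}: a positive factor independent of $m$ (this is where $c_0=n_0+3$ enters) times $(2g)!\,((m-1)!)^4/2^{m-1}$ once $m-1>g$. The upper bound is the one you eventually settle on: degenerate all non-simple ramification to simple branch points, $h^\cb_{\lambda,\mu}(g,h,m)\le H^\cb_g(1^d,1^d)$ with $d=|\lambda|+2c+h+2m-1$ (from the proof of \cite[Theorem 5.10]{rau2019}), and import the large-degree asymptotics $\ln H^\cb_g(1^d,1^d)\sim 2d\ln d\sim 4m\ln m$ from \cite{dyz-2017}. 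The monodromy and character-theoretic attempts are therefore unnecessary. The $(m!)^4$ comes from freely reordering four antichains of $m$ vertices inside a single zigzag cover, not from a glued chain, but the growth rate you use is correct.
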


As the genus goes to infinity and only simple branch points are added, the following theorem
shows that the logarithms of real double Hurwitz numbers (with triple ramification) and their complex counterparts
are of the same order.
\begin{theorem}
\label{coro:main3}
Let $\lambda,\mu$ be two partitions with $|\lambda|=|\mu|$.
Then the following statements hold.
\begin{enumerate}
\item[$(1)$] Suppose that $\lambda,\mu$ satisfy the conditions in Theorem $\ref{thm:main1}$.
Then there exists an integer $c_0$ depending on $\lambda,\mu$
such that if $h>c_0$, $m>1$ are fixed, we have $\ln z_{\lambda,\mu}(g,h,m)$ and
$\ln h^\cb_{\lambda,\mu}(g,h,m)$ are of the same order as $g\to\infty$:
$$
\liminf_{g\to\infty}\frac{\ln z_{\lambda,\mu}(g,h,m)}{\ln h^\cb_{\lambda,\mu}(g,h,m)}\geq
\frac{\ln(m-1)}{ \ln(\frac{(|\lambda|+2c+h+2m-1)(|\lambda|+2c+h+2m-2)}{2})}.
$$
\item[$(2)$] Suppose that $\lambda,\mu$ satisfy the conditions in Theorem $\ref{thm:main2}$.
Then there exists an integer $N_0$ depending on $\lambda,\mu$
such that if $m>N_0$ is fixed, we have $\ln z_{\lambda,\mu}(g,m)$ and
$\ln h^\cb_{\lambda,\mu}(g,m)$ are of the same order as $g\to\infty$:
$$
\liminf_{g\to\infty}\frac{\ln z_{\lambda,\mu}(g,m)}{\ln h^\cb_{\lambda,\mu}(g,m)}\geq
\frac{\ln (m-N_0)}{\ln(\frac{(|\lambda|+2m)(|\lambda|+2m-1)}{2})}.
$$
\end{enumerate}
\end{theorem}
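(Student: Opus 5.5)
The plan is to sandwich both ratios between a lower bound coming from Theorem \ref{thm:main1} / Theorem \ref{thm:main2}, or more precisely from the gluing formula behind them, and an elementary upper bound for the complex numbers. For the upper bound I use the standard monodromy count. A degree $d$ cover with prescribed profiles over $0$ and $\infty$ and $2s+t$ further branch points is determined, up to the bounded $\aut$ factor, by a tuple of permutations. Each simple branch point contributes a transposition, and there are at most $\binom{d}{2}=\frac{d(d-1)}{2}$ of these. Each triple point contributes a $3$-cycle or a product of two transpositions, so at most $d^4$ choices. The profile over $0$ contributes at most $d!$ choices, and the profile over $\infty$ is then forced.

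In case (1) the degree is $d=|\lambda|+2c+h+2m-1$, with $h,m$ fixed and $t(m)=2g+O(1)$. This gives
\[
\ln h^\cb_{\lambda,\mu}(g,h,m)\le t(m)\ln\tfrac{d(d-1)}{2}+O(1)=2g\ln\tfrac{d(d-1)}{2}+O(1).
\]
Case (2) is the same with $d=|\lambda|+2m$ and $t=2g+O(1)$.

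For the lower bound I do not invoke the uniform asymptotic statement. Instead I go back to the construction used to prove Theorem \ref{thm:main1}, namely the gluing formula of Lemma \ref{lem:glue1}. The idea is to concatenate a fixed initial zigzag/proper-zigzag piece, which absorbs $\lambda,\mu$, the triple points and the constant $c$, with a chain of $g$ elementary genus-one pieces. Each such piece consumes two simple branch points.

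The key step is to show that each genus-one piece, attached to the $2m$ (or $2m+2c+h-1$) thin ends of weight one, can be realised in at least $(m-1)^2$ ways, respectively $(m-N_0)^2$ ways. Concretely, a cycle is formed between two of the available weight-one strands, which need not be adjacent. If $p$ strands remain free, the number of admissible choices of where the two new vertices sit in the zigzag ordering is of order $p$ per branch point. Taking $p\ge m-1$, respectively $p\ge m-N_0$ after reserving $N_0$ strands to absorb the odd parts of $\lambda,\mu$ as in Theorem \ref{thm:main2}, gives a factor of at least $(m-1)$ per simple branch point. The threshold $c_0$ is needed so that the triple points have enough room in the fixed piece for it to be nonzero, which is what the hypotheses of Theorem \ref{thm:main1} guarantee. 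Multiplying the factors gives
\[
\ln z\ge 2g\ln(m-1)+O(1).
\]

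Dividing the lower bound by the upper bound and letting $g\to\infty$ kills the $O(1)$ terms and yields the stated ratios. The inequality $z\le h^\cb$ follows from Proposition \ref{thm:low-bou1}, and it also shows that the ratios are at most $1$, hence "same order".

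The main obstacle is the per-branch-point factor $m-1$ uniformly in $g$. The number of free weight-one strands must not shrink as genus is added, so the gluing has to close each cycle back onto weight-one strands. I would try first to prove that each elementary genus-one gadget returns the strand configuration to itself, making the count multiplicative via Lemma \ref{lem:glue1}. I would also need to verify that the colouring and sign multiplicities are positive, so that no cancellations occur in the proper zigzag count.
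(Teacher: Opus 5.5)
\textbf{There is a genuine gap in the lower bound.}

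Your upper bound is fine, and it is more self-contained than the paper's. The monodromy count gives $h^\cb\le\binom{d}{2}^{t}d^{4s}$, with $s$ independent of $g$ and $t=2g+O(1)$. The paper instead bounds $h^\cb$ by $H^\cb_g(1^d,1^d)$ and quotes the Dubrovin--Yang--Zagier asymptotic $\ln H^\cb_g(1^d,1^d)\sim 2g\ln\frac{d(d-1)}{2}$.

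The gap is the lower bound $\ln z\ge 2g\ln(m-1)+o(g)$ (resp.\ $2g\ln(m-N_0)+o(g)$), which is the real content of the statement. The factor $m-1$ per simple branch point, which you yourself flag as the main obstacle, is never established, and the mechanism you sketch cannot produce it, for three reasons.
\begin{itemize}
\item Lemma~\ref{lem:glue1} cannot add genus at fixed degree. Gluing along a weight-one end gives degree $d_1+d_2-1$, and any piece carrying a symmetric cycle has degree at least $3$. So a chain of $g$ genus-one pieces has degree growing linearly in $g$, while here the degree is fixed.
\item Lemma~\ref{lem:glue1} only multiplies counts. The glued total order is the concatenation $O(C_1),O(C_2)$, and a genus-one piece of bounded degree has $O(1)$ realisations, not $(m-1)^2$.
\item You cannot form a cycle between two distinct weight-one strands. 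With only simple branch points, $S$ contains no even edges, and every trivalent vertex has an even number of odd edges. Hence $S$ has maximal degree two and $b_1(S)\le 1$. By Definition~\ref{def:gen-zig-cov}(3), all remaining genus must sit in symmetric odd-weight cycles on tails.
\end{itemize}

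The missing idea is Lemma~\ref{lem:asymp2}, which gets the factor $m'$ per vertex from interleaving, not from gluing.
\begin{itemize}
\item Take a single source curve: the unmixed zigzag cover of type $(g,(1^{2m'+1}),(1^{2m'+1}))$ with $m'$ inward $(1,1)$-tails. Put the $g$ symmetric cycles on these tails as evenly as possible.
\item Cycle vertices on different tails are incomparable in the order induced by the orientation. So the number of linear extensions, hence of zigzag covers, is at least $\frac{(2g)!}{((2\lfloor g/m'\rfloor+2)!)^{m'}}$.
\item By Stirling (Lemma~\ref{lem:limit1}(1)), the logarithm of this is $\sim 2g\ln m'$. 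The error term is $O(m'\ln g)$, not $O(1)$, but that is harmless.
\item Transfer this to $z$ by Theorem~\ref{thm:zigzag1} with $m'=m-N_0$, respectively Theorem~\ref{thm:asymp2} with $2m-1=2(m-1)+1$, i.e.\ $m'=m-1$.
\end{itemize}
In those two theorems Lemma~\ref{lem:glue1} is used only a bounded number of times, to attach $g$-independent pieces absorbing $\lambda,\mu$, the triple points and $c$. The constant $c_0=n_0+3$ is just the threshold that makes the $\zl_0$ factor nonzero. All the numbers involved are plain nonnegative counts, so there is no cancellation to worry about.
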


%


From the logarithmic equivalence
between real invariants and their complex counterparts
\cite{iks2004,iks2007,iz-2018,rau2019,shustin2015,d-2020,dh-2022},
it is natural to expect that the real double Hurwitz numbers
$H_g^\rb(\lambda,\mu;\Lambda_{s,0}^-,\Lambda_{s,0}^+)$ with only triple ramification
are logarithmically equivalent to their complex
counterparts $H_g^\cb(\lambda,\mu;s,0)$.
However, the logarithmic asymptotics
for $H_g^\cb(\lambda,\mu;s,0)$ remain unestablished when the degree tends to infinity and
only triple branch points are added.
For this reason, we cannot confirm whether our logarithmic growth estimate 
for real double Hurwitz numbers
with only triple ramification is optimal.
Nevertheless, the logarithmic growth rate of $2m\ln m$ \cite{dyz-2017,rau2019,d-2020}
for complex (and real) double Hurwitz numbers
$H^\cb_g((\lambda,1^m),(\mu,1^m); 0,t)$ as $m\to\infty$ and only simple branch points
are added
yields a strictly larger upper bound for the logarithmic growth
of real double Hurwitz numbers with only triple ramification.
It is very interesting to find the logarithmic asymptotics
for $H_g^\cb(\lambda,\mu;s,0)$ as the degree tends to infinity and
only triple branch points are added in the future.

\begin{problem}
What is the large-degree asymptotics for complex double Hurwitz numbers
with only triple ramification?
\end{problem}

In Section \ref{sec:2}, we review the definition of real double Hurwitz numbers with triple ramification
and the modified tropical correspondence theorem.
Section \ref{sec:4} presents the definition and properties of generalized zigzag numbers.
We establish a lower bound for the logarithmic asymptotics of real double Hurwitz numbers with only
triple ramification in Section \ref{sec:5}.
In Section \ref{sec:6}, we introduce the proper zigzag number and prove that it serves as a lower bound
for real double Hurwitz numbers with triple ramification.
The uniform asymptotics for real double Hurwitz numbers with triple ramification are derived in the section \ref{sec:7}.
The proofs of two uniform limits are provided in Appendix \ref{sec:a}.

\section{Preliminaries on the modified tropical correspondence theorem}
\label{sec:2}
In this section, we recall the definition of real double Hurwitz numbers with triple ramification and 
the modified tropical correspondence theorem \cite[Theorem 5.11]{dlly-23}.

\subsection{Real double Hurwitz numbers with triple ramification}

Fix integers $d\geq1$, $g\geq0$, and let $\lambda$, $\mu$ denote two partitions of $d$.
Choose non-negative integers $s,t$ satisfying $2s+t=l(\lambda)+l(\mu)+2g-2$,
an equality determined by the Riemann-Hurwitz formula. 
Let $\undl b=\{b_1,\ldots,b_{s+t}\}\subset\rb\pb^1\setminus\{0,\infty\}$ be a set
with elements ordered such that $b_1<b_2<\cdots<b_{s+t}$.
A map $r:\undl b\to\{2,3\}$ is called {\it an $(s,t)$-ramification function} on $\undl b$
if $|r^{-1}(3)|=s$ and $|r^{-1}(2)|=t$. A set $\undl b$ together with a ramification function $r$
on it are called an {\it $(s,t)$-branch set}.

\begin{definition}
\label{def:CDH}
The {\it complex double Hurwitz number with triple ramification} is defined as
$$
H^\cb_g(\lambda,\mu; s,t)=\sum_{[\pi]}
\frac{1}{|\aut^\cb(\pi)|},
$$
where the sum is taken over all isomorphism classes $[\pi]$ of
ramified coverings $\pi:C\to\cb\pb^1$ that satisfy the following conditions:
\begin{enumerate}
    \item $C$ is a connected Riemann surface of genus $g$;
    \item $\pi:C\to\cb\pb^1$ is a holomorphic map of degree $d$;
    \item $\pi$ has ramification profiles $\lambda$ and $\mu$ over $0$ and $\infty$, respectively;
    \item Let $\undl b$ be an $(s,t)$-branch set. Over points in $r^{-1}(3)$, $\pi$ has ramification profile $(3,1,\ldots,1)$, 
    while over points in $r^{-1}(2)$, it has simple ramification;
    \item $\pi$ is unramified at all other points.
\end{enumerate}
Here, an isomorphism of two such coverings $\pi_1:C_1\to\cb\pb^1$ and $\pi_2:C_2\to\cb\pb^1$
is an isomorphism $\varphi:C_1\to C_2$ of Riemann surfaces
satisfying $\pi_1=\pi_2\circ\varphi$.
The group $\aut^{\cb}(\pi)$ denotes the automorphism group of the ramified covering $\pi$.
\end{definition}

It is well-known that the number $H^\cb_g(\lambda,\mu; s,t)$ only depends on the partitions
$\lambda$, $\mu$ and the integers $s$, $t$ \cite{cm-2016,mr-2015}.
When $s=0$, the number $H^\cb_g(\lambda,\mu; 0,t)$ is the
ordinary complex double Hurwitz number, and we use the abbreviation
$H^\cb_g(\lambda,\mu; t)$ to denote $H^\cb_g(\lambda,\mu; 0,t)$.

An \textit{$(s,t)$-tuple} $\Lambda_{s,t}$ is a collection $(\Lambda_1,\ldots,\Lambda_{s+t})$
of partitions of $d$, where $s$ entries are $(3,1,\ldots,1)$ 
and the remaining $t$ are $(2,1,\ldots,1)$.
Let $\undl b$ be an $(s,t)$-branch set 
and $\Lambda_{s,t}$ an $(s,t)$-tuple.
$\Lambda_{s,t}$ is \textit{compatible} with $\undl b$,
if
\begin{itemize}
    \item $\Lambda_i=(3,1,\ldots,1)$ iff $r(b_i)=3$, and $\Lambda_j=(2,1,\ldots,1)$ iff $r(b_j)=2$.
\end{itemize}
For a compatible pair  $\undl b$ and  $\Lambda_{s,t}$, split $\Lambda_{s,t}$ into 
$\Lambda^-_{s,t}=(\Lambda_1,\ldots,\Lambda_{|\undl b\cap\rb^-|})$
and $\Lambda^+_{s,t}=(\Lambda_{1+|\undl b\cap\rb^-|},\ldots,\Lambda_{s+t})$.
This splitting $\Lambda_{s,t}=(\Lambda^-_{s,t}, \Lambda^+_{s,t})$ is called the {\it compatible signed splitting}
of $\undl b$.


\begin{definition}
\label{def:RDH}
Let $\Lambda_{s,t}=(\Lambda^-_{s,t}, \Lambda^+_{s,t})$ be a compatible signed splitting
of the $(s,t)$-branch set $\undl b$.
The {\it real double Hurwitz number with triple ramification} is the number
$$
H^\rb_g(\lambda,\mu;\Lambda^-_{s,t},\Lambda^+_{s,t})=\sum_{[(\pi,\tau)]}
\frac{1}{|\aut^\rb(\pi,\tau)|},
$$
where the sum is taken over all isomorphism classes $[(\pi,\tau)]$ of
ramified coverings $\pi:C\to\cb\pb^1$ that satisfy Conditions $(1)-(5)$
in Definition \ref{def:CDH} and the following supplementary requirement.
\begin{itemize}
    \item $\tau:C\to C$ is an anti-holomorphic involution on $C$
    satisfying $\pi\circ\tau=\conj\circ\pi$, with $\conj$ denoting the standard complex conjugation;
\end{itemize}
Here, an isomorphism of two real coverings
$(\pi_1:C_1\to\cb\pb^1,\tau_1)$
and $(\pi_2:C_2\to\cb\pb^1,\tau_2)$
is a Riemann surface isomorphism $\varphi:C_1\to C_2$ 
such that $\pi_1=\pi_2\circ\varphi$ and $\varphi\circ\tau_1=\tau_2\circ\varphi$.
The group $\aut^{\rb}(\pi,\tau)$ denotes the automorphism group of  the real ramified covering $(\pi,\tau)$.
\end{definition}

Note that
when the two partitions $\lambda$, $\mu$ are fixed, $H^\rb_g(\lambda,\mu; \Lambda^-_{s,t},\Lambda^+_{s,t})$
depends on the two sequences $\Lambda^-_{s,t}$ and $\Lambda^+_{s,t}$ \cite[Section 2]{mr-2015}.
In the case that $s=0$ and $|\undl b\cap\rb^{+}|=r\leq t$,
we use $H^\rb_g(\lambda,\mu;r)$ to denote the ordinary real double Hurwitz number
$H^\rb_g(\lambda,\mu;\Lambda^-_{0,t},\Lambda^+_{0,t})$.

\subsection{The modified tropical correspondence theorem}

A {\it tropical curve} $C$ is defined as a connected metric graph containing only $1$-valent and $3$-valent vertices.
The $3$-valent vertices of $C$ are called {\it inner vertices} of $C$, while
the $1$-valent vertices of $C$ are called {\it leaves}.
Denote by $\vt(C)$ the set of inner vertices and leaves of $C$.
An {\it inner edge} $e$ of $C$ is an edge with finite length $\ell(e)\in\rb$, whereas
edges incident to leaves are called {\it ends} and have infinite length.
Let $\eg(C)$ denote the set of edges of $C$.
An isomorphism $\varPhi:C_1\to C_2$ between two tropical curves $C_1$, $C_2$ is
an isometric homeomorphism $\varPhi:C_1^\circ\to C_2^\circ$, 
here $C_1^\circ$ and $C_2^\circ$ are the sub-graphs of
$C_1$ and $C_2$ obtained by deleting all $1$-valent vertices, respectively.
For any vertex $v\in C$, let $\val(v)$ denote its valence. The {\it genus} of $C$ is 
the first Betti number $b_1(C)$ of $C$.
We consider $T\pb^1=\rb\cup\{\pm\infty\}$ as the \textit{tropical projective line}.

Fix two integers $d\geq1$, $g\geq0$. Let $\lambda$, $\mu$ be two partitions of $d$,
and let $s,t$ be two non-negative integers such that $2s+t=l(\lambda)+l(\mu)+2g-2$.

\begin{definition}
\label{def:ram-split}
Let $\undl x'=\{x'_1,\ldots,x'_{2s}\}\subset\rb$ be ordered such that $x'_1<\cdots<x'_{2s}$.
For $i=1,\ldots,s$, set $x_i=(x'_{2i-1},x'_{2i})$ and let $\undl x=\{x_1,\ldots,x_s\}$ be the set of these pairs.
Let $\undl y=\{y_1,\ldots,y_{t}\}\subset\rb\setminus\undl x'$ be an ordered set with $y_1<\ldots<y_{t}$.
The disjoint union $\undl z=\undl x\sqcup\undl y$ is called an {\it $(s,t)$-distribution}, if
$[x'_{2i-1},x'_{2i}]\cap\undl y=\emptyset$ for all $i=1,\ldots,s$.
We write $x_i<y_j$ to mean $x'_{2i-1}<x'_{2i}<y_j$, and assume 
$\undl z=\{z_1,\ldots,z_{s+t}\}$ is ordered with $z_1<\cdots<z_{s+t}$, where $z_i\in\undl x\sqcup\undl y$.
\end{definition}

\begin{definition}
\label{def:d-tro-cover}
Let $\undl z=\undl x\sqcup\undl y$ be an $(s,t)$-distribution.
A \textit{tropical cover} $\varphi:C\to T\pb^1$ of type $(g,\lambda,\mu,\undl z)$
is a continuous map from a genus $g$ tropical curve $C$ satisfying these conditions:
\begin{enumerate}[(1)]
    \item $\undl x'\sqcup\undl y$ is the images of inner vertices of $C$ under $\varphi$, 
    and these vertices are called {\it inner vertices} of $T\pb^1$.
    \item $\varphi$ maps all leaves of $C$ onto $\{\pm\infty\}$.
    \item $\varphi$ is piecewise integer affine linear,
    with the slope $\w(e)\in\nb_{>0}$ of $\varphi$ on an edge $e$ defined as the {\it weight} of $e$.
    \item $\varphi$ satisfies the balancing condition at every inner vertex of $C$.
    \item The entries of $\lambda$ (resp. $\mu$) are weights of ends mapped by $\varphi$ to edges incident to $-\infty$ (resp. $+\infty$).
\end{enumerate}
\end{definition}

For any edge $e'$ of $T\pb^1$,
the balancing condition implies that the sum
$$
\deg(\varphi):=\sum_{
    \substack{e\text{ edge of } C\\
    e'\subset\varphi(e)}}\w(e)
$$
is independent of $e'$, and it is called the \textit{degree} of $\varphi$.

In section \ref{sec:5}--\ref{sec:7}, we frequently use a combinatorial description \cite[Remark 5.3]{rau2019} 
of tropical covers to study uniform asymptotic behaviors of real double Hurwitz numbers with triple ramification.
For the convenience of readers, we recall \cite[Remark 5.3]{rau2019} in Remark \ref{rmk-combin1}.
\begin{remark}[{\cite[Remark 5.3]{rau2019}}]
\label{rmk-combin1}
Let $C$ be a genus $g$ graph with only $1$-valent and $3$-valent vertices.
Suppose that $C$ is equipped with an orientation $\ol$ such that there is no oriented loops in $C$.
Assume that every edge of $C$ is weighted with a positive integer such that the balancing condition holds.
The orientation induces a partial order $\pl$ on inner vertices of $C$.
Let $\undl z\subset \rb$ be a set of $r$ points which satisfy $z_1<\cdots<z_r$,
where $r$ is the number of inner vertices of $C$.
Then any total order $v_1,\ldots,v_r$ on inner vertices of $C$ extending the partial order $\pl$
gives a unique tropical cover $\varphi:C\to T\pb^1$ satisfying the following conditions:
\begin{itemize}
\item the restricted orientation on every edge agrees with the orientation of $T\pb^1$ (from $-\infty$ to $+\infty$) under $\varphi$;
\item the given weight of every edge agrees with the one induced by $\varphi$;
\item $\varphi(v_i)=z_i$ for $i=1,\ldots, r$.
\end{itemize}

Vice versa, let $\varphi:C\to T\pb^1$ be a tropical cover of type $(g,\lambda,\mu,\undl z)$.
The tropical map $\varphi$ and the tropical projective line $T\pb^1$ induce an orientation $\ol$ on the tropical curve $C$:
each edge $e$ of $C$ is oriented from $v_1(e)$ to $v_2(e)$, where $v_1(e)$ and $v_2(e)$ are endpoints of $e$
with $\varphi(v_1(e))<\varphi(v_2(e))$. Obviously, $C$ is oriented without oriented loops.
Each edge $e$ of $C$ is weighted by the weight $\omega(e)$.
The map $\varphi$ and $\undl z$ determine a unique total order $v_1,\ldots,v_r$ on inner vertices of $C$
such that $\varphi(v_i)=z_i$, where $i=1,\ldots,r$.
The total order $v_1,\ldots,v_r$ extends the partial order $\pl$, which is induced by the orientation $\ol$.
\end{remark}

A \textit{symmetric cycle} (resp. \textit{symmetric fork}) of a
tropical cover $\varphi:C\to T\pb^1$ is a pair of inner edges (resp. ends) of the same weight
and are adjacent to the same two vertices (resp. one vertex).
A symmetric fork is \textit{inward} (resp. \textit{outward}) if the ends of it 
can be oriented pointing from leaves to inner vertices (resp. from inner vertices to leaves). 
Let $\sym(\varphi)$ denote the set of symmetric cycles and symmetric forks of $\varphi:C\to T\pb^1$.

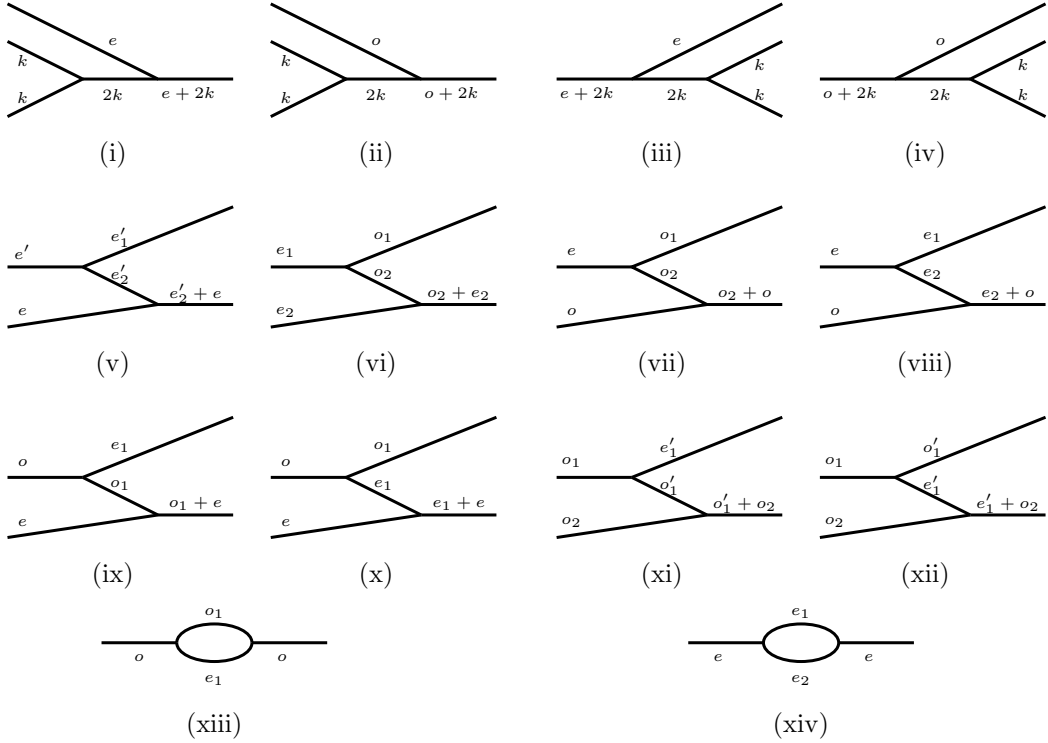
\begin{figure}[ht]
    \centering
    \begin{tikzpicture}
    \draw[line width=0.4mm] (0,-1)--(-1,-1)--(-3,0);
    \draw[line width=0.4mm] (-2,-1)--(-1,-1);
    \draw[line width=0.4mm] (-3,-1.5)--(-2,-1)--(-3,-0.5);
    \draw (-2.8,-0.75) node {\tiny $k$} (-2.8,-1.25) node {\tiny $k$} (-1.6,-1.2) node {\tiny $2k$} (-1.6,-0.5) node {\tiny $e$} (-0.6,-1.2) node {\tiny $e+2k$};
    \draw (-1.6,-2) node {(\textrm{i})};
    \draw[line width=0.4mm] (3.5,-1)--(2.5,-1)--(0.5,0);
    \draw[line width=0.4mm] (1.5,-1)--(2.5,-1);
    \draw[line width=0.4mm] (0.5,-1.5)--(1.5,-1)--(0.5,-0.5);
    \draw (0.7,-0.75) node {\tiny $k$} (0.7,-1.25) node {\tiny $k$} (1.9,-1.2) node {\tiny $2k$} (1.9,-0.5) node {\tiny $o$} (2.9,-1.2) node {\tiny $o+2k$};
    \draw (1.9,-2) node {(\textrm{ii}) };
    \draw[line width=0.4mm] (4.3,-1)--(5.3,-1)--(7.3,0);
    \draw[line width=0.4mm] (5.3,-1)--(6.3,-1);
    \draw[line width=0.4mm] (7.3,-1.5)--(6.3,-1)--(7.3,-0.5);
    \draw (7,-0.8) node {\tiny $k$} (7,-1.2) node {\tiny $k$} (5.9,-1.2) node {\tiny $2k$} (5.9,-0.5) node {\tiny $e$} (4.7,-1.2) node {\tiny $e+2k$};
    \draw (5.7,-2) node {(\textrm{iii})};
    \draw[line width=0.4mm] (7.8,-1)--(8.8,-1)--(10.8,0);
    \draw[line width=0.4mm] (8.8,-1)--(9.8,-1);
    \draw[line width=0.4mm] (10.8,-1.5)--(9.8,-1)--(10.8,-0.5);
    \draw (10.5,-0.8) node {\tiny $k$} (10.5,-1.2) node {\tiny $k$} (9.4,-1.2) node {\tiny $2k$} (9.4,-0.5) node {\tiny $o$} (8.2,-1.2) node {\tiny $o+2k$};
    \draw (9.2,-2) node {(\textrm{iv})};
    \draw[line width=0.4mm] (-3,-3.5)--(-2,-3.5)--(0,-2.7);
    \draw[line width=0.4mm] (-2,-3.5)--(-1,-4);
    \draw[line width=0.4mm] (-3,-4.3)--(-1,-4)--(0,-4);
    \draw (-2.8,-4.1) node{\tiny $e$} (-2.8,-3.3) node{\tiny $e'$}
    (-1.5,-3.1) node{\tiny $e_1'$} (-1.5,-3.6) node{\tiny $e_2'$} (-0.5,-3.85) node{\tiny $e_2'+e$}
    (-1.6,-4.8) node {(\textrm{v})};
    \draw[line width=0.4mm] (0.5,-3.5)--(1.5,-3.5);
    \draw[line width=0.4mm] (3.5,-2.7)--(1.5,-3.5)--(2.5,-4)--(3.5,-4);
    \draw[line width=0.4mm] (0.5,-4.3)--(2.5,-4);
    \draw (0.7,-4.1) node{\tiny $e_2$} (0.7,-3.3) node{\tiny $e_1$}
    (2,-3.1) node{\tiny $o_1$} (2,-3.6) node{\tiny $o_2$} (3,-3.85) node{\tiny $o_2+e_2$};
    \draw (1.9,-4.8) node {(\textrm{vi})};
    \draw[line width=0.4mm] (4.3,-3.5)--(5.3,-3.5);
    \draw[line width=0.4mm] (7.3,-2.7)--(5.3,-3.5)--(6.3,-4)--(4.3,-4.3);
    \draw[line width=0.4mm] (6.3,-4)--(7.3,-4);
    \draw (4.5,-4.1) node{\tiny $o$} (4.5,-3.3) node{\tiny $e$}
    (5.8,-3.1) node{\tiny $o_1$} (5.8,-3.6) node{\tiny $o_2$} (6.8,-3.85) node{\tiny $o_2+o$};
    \draw (5.7,-4.8) node {(\textrm{vii}) };
    \draw[line width=0.4mm] (7.8,-3.5)--(8.8,-3.5)--(10.8,-2.7);
    \draw[line width=0.4mm] (8.8,-3.5)--(9.8,-4);
    \draw[line width=0.4mm] (7.8,-4.3)--(9.8,-4)--(10.8,-4);
    \draw (8,-4.1) node{\tiny $o$} (8,-3.3) node{\tiny $e$}
    (9.3,-3.1) node{\tiny $e_1$} (9.3,-3.6) node{\tiny $e_2$} (10.3,-3.85) node{\tiny $e_2+o$};
    \draw (9.2,-4.8) node {(\textrm{viii})};
    \draw[line width=0.4mm] (-3,-6.3)--(-2,-6.3);
    \draw[line width=0.4mm] (-2,-6.3)--(-1,-6.8);
    \draw[line width=0.4mm] (-2,-6.3)--(0,-5.5);
    \draw[line width=0.4mm] (-3,-7.1)--(-1,-6.8);
    \draw[line width=0.4mm] (0,-6.8)--(-1,-6.8);
    \draw (-2.8,-6.9) node{\tiny $e$} (-2.8,-6.1) node{\tiny $o$}
    (-1.5,-5.9) node{\tiny $e_1$} (-1.5,-6.4) node{\tiny $o_1$} (-0.5,-6.65) node{\tiny $o_1+e$};
    \draw (-1.6,-7.6) node {(\textrm{ix})};
    \draw[line width=0.4mm] (0.5,-6.3)--(1.5,-6.3);
    \draw[line width=0.4mm] (1.5,-6.3)--(2.5,-6.8);
    \draw[line width=0.4mm] (1.5,-6.3)--(3.5,-5.5);
    \draw[line width=0.4mm] (0.5,-7.1)--(2.5,-6.8);
    \draw[line width=0.4mm] (3.5,-6.8)--(2.5,-6.8);
    \draw (0.7,-6.9) node{\tiny $e$} (0.7,-6.1) node{\tiny $o$}
    (2,-5.9) node{\tiny $o_1$} (2,-6.4) node{\tiny $e_1$} (3,-6.65) node{\tiny $e_1+e$};
    \draw (1.9,-7.6) node {(\textrm{x})};
    \draw[line width=0.4mm] (4.3,-6.3)--(5.3,-6.3);
    \draw[line width=0.4mm] (5.3,-6.3)--(6.3,-6.8);
    \draw[line width=0.4mm] (5.3,-6.3)--(7.3,-5.5);
    \draw[line width=0.4mm] (4.3,-7.1)--(6.3,-6.8);
    \draw[line width=0.4mm] (7.3,-6.8)--(6.3,-6.8);
    \draw (4.5,-6.9) node{\tiny $o_2$} (4.5,-6.1) node{\tiny $o_1$}
    (5.8,-5.9) node{\tiny $e_1'$} (5.8,-6.4) node{\tiny $o_1'$} (6.8,-6.65) node{\tiny $o_1'+o_2$};
    \draw (5.7,-7.6) node {(\textrm{xi})};
    \draw[line width=0.4mm] (7.8,-6.3)--(8.8,-6.3);
    \draw[line width=0.4mm] (8.8,-6.3)--(9.8,-6.8);
    \draw[line width=0.4mm] (8.8,-6.3)--(10.8,-5.5);
    \draw[line width=0.4mm] (7.8,-7.1)--(9.8,-6.8);
    \draw[line width=0.4mm] (10.8,-6.8)--(9.8,-6.8);
    \draw (8,-6.9) node{\tiny $o_2$} (8,-6.1) node{\tiny $o_1$}
    (9.3,-5.9) node{\tiny $o_1'$} (9.3,-6.4) node{\tiny $e_1'$} (10.3,-6.65) node{\tiny $e_1'+o_2$};
    \draw (9.2,-7.6) node {(\textrm{xii})};
    \draw[line width=0.4mm] (-0.75,-8.5)--(-1.75,-8.5);
    \draw[line width=0.4mm] (0.25,-8.5)--(1.25,-8.5);
    \draw[line width=0.4mm] (-0.25,-8.5) +(0:0.5 and 0.25) arc (0:180:0.5 and 0.25);
    \draw[line width=0.4mm] (-0.25,-8.5) +(0:0.5 and 0.25) arc (0:-180:0.5 and 0.25);
    \draw (-1.25,-8.7) node {\tiny $o$} (-0.25,-8.1) node {\tiny $o_1$} (-0.25,-9) node {\tiny $e_1$} (0.65,-8.7) node {\tiny $o$};
    \draw (-0.25,-9.6) node {(\textrm{xiii})};
    \draw[line width=0.4mm] (7.05,-8.5)--(6.05,-8.5);
    \draw[line width=0.4mm] (8.05,-8.5)--(9.05,-8.5);
    \draw[line width=0.4mm] (7.55,-8.5) ellipse (0.5 and 0.25);
    \draw (6.45,-8.7) node {\tiny $e$} (7.55,-8.1) node {\tiny $e_1$} (7.55,-9) node {\tiny $e_2$} (8.45,-8.7) node {\tiny $e$};
    \draw (7.55,-9.6) node {(\textrm{xiv})};
    \end{tikzpicture}
    \caption{Pairs of vertices of resolving tropical covers.}
    \label{fig:pair-EC}
\end{figure}
\begin{definition}
\label{def:resolving-cover}
A tropical cover $\varphi:C\to T\pb^1$ of type $(g,\lambda,\mu,\undl z)$
is a \textit{resolving tropical cover} if
it satisfies the following conditions:
\begin{enumerate}
    \item For each $i\in\{1,\ldots,s\}$, the vertex pair $x_i=(x_{2i-1}', x_{2i}')$
    is the image of two adjacent $3$-valent vertices in Figure \ref{fig:pair-EC}, up to reflection along a horizontal line.
    \item The pair of two flags with the same weight $k$ in the first row of Figure \ref{fig:pair-EC}
    is either a symmetric fork of $C$ or is contained in a symmetric cycle of $C$.
\end{enumerate}
\end{definition}

Let $\varphi$ be a resolving tropical cover of type $(g,\lambda,\mu,\undl z)$.
The edge connecting the two vertices in any figure of Figure \ref{fig:pair-EC}
is called the \textit{contractible edge} of that pair of vertices.
A symmetric cycle or fork is called {\it non-contractible} if it contains no contractible edges.
We use the following notation.
\begin{itemize}
    \item $\symc(\varphi)$: the set of symmetric cycles of $\varphi:C\to T\pb^1$.
    \item $E_c(\varphi)$: the set of even contractible edges in $C$.
    \item $\sym_3(\varphi)$ (resp. $\sym_2(\varphi)$): the set of non-contractible symmetric cycles/forks 
    adjacent to (resp. not adjacent to) a contractible edge.
    \item $\symc_c(\varphi)$: the set of contractible symmetric cycles.
    \item $\nsym_c(\varphi)$: the set of nonsymmetric
    cycles consisting of two even contractible edges.
\end{itemize}
Note that $\sym_3(\varphi)$ is the set of symmetric forks and symmetric cycles containing
the weight $k$ flag pair in the first row of Figure \ref{fig:pair-EC}.

\begin{definition}
\label{def:coloured-TC}
A \textit{coloured tropical cover} $(\varphi:C\to T\pb^1,\rho)$ consists of a tropical cover, 
a subset $I_\rho(\varphi)\subset \sym(\varphi)$, and an assignment of red or blue to each component
of the subgraph of even-weight edges in $C\setminus (I_\rho(\varphi))^\circ$.
\end{definition}

Let $(\varphi_1:C_1\to T\pb^1;\rho_1)$ and
$(\varphi_2:C_2\to T\pb^1;\rho_2)$ be coloured tropical covers.
A tropical curve isomorphism $\varPhi:C_1\to C_2$ that preserves colourings 
and satisfies $\varphi_2\circ\varPhi=\varphi_1$
is called an {\it isomorphism} of the coloured tropical covers
$(\varphi_1,\rho_1)$ and $(\varphi_2,\rho_2)$.
For a coloured tropical cover $(\varphi,\rho)$ of type
$(g,\lambda,\mu,\undl z)$, a {\it positive} (resp. {\it negative}) point is
the image of an inner vertex of $C$ shown in the left two (resp. right two) columns of Figure \ref{fig:coloured-vertices}, 
up to reflection along a vertical line.
\begin{figure}[ht]
    \centering
    \begin{tikzpicture}
    \draw[line width=0.3mm] (-3,0)--(-2,0)--(-1,0.5);
    \draw[line width=0.3mm,blue] (-2,0)--(-1,-0.5);
    \draw[line width=0.3mm,blue] (-0.5,0,0)--(0.5,0)--(1.5,0.5);
    \draw[line width=0.3mm,blue] (0.5,0)--(1.5,-0.5);
    \draw[line width=0.3mm,red] (-3,-1.3)--(-2,-1.3);
    \draw[line width=0.3mm] (-1,-0.8)--(-2,-1.3)--(-1,-1.8);
    \draw[line width=0.3mm,blue] (-0.5,-1.3)--(0.5,-1.3);
    \draw[line width=0.3mm,dotted] (1.5,-0.8)--(0.5,-1.3)--(1.5,-1.8);
    \draw[line width=0.3mm] (4,0)--(5,0)--(6,0.5);
    \draw[line width=0.3mm,red] (5,0)--(6,-0.5);
    \draw[line width=0.3mm,red] (6.5,0)--(7.5,0)--(8.5,0.5);
    \draw[line width=0.3mm,red] (7.5,0)--(8.5,-0.5);
    \draw[line width=0.3mm,blue] (4,-1.3)--(5,-1.3);
    \draw[line width=0.3mm] (6,-0.8)--(5,-1.3)--(6,-1.8);
    \draw[line width=0.3mm,red] (6.5,-1.3)--(7.5,-1.3);
    \draw[line width=0.3mm,dotted] (8.5,-0.8)--(7.5,-1.3)--(8.5,-1.8);
    \end{tikzpicture}
    \caption{Positive and negative vertices: even edges are coloured, odd edges are in black, and edges in $I_\rho$ are dotted.}
    \label{fig:coloured-vertices}
\end{figure}
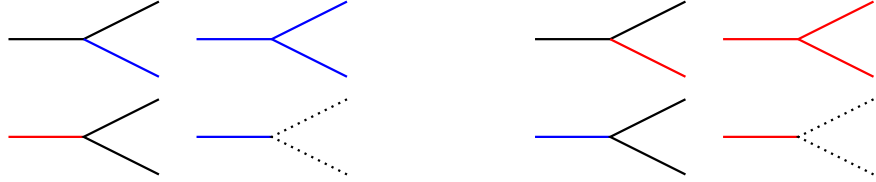

\begin{definition}
\label{def:E-real-TC1}
A colouring $\rho$ of a resolving tropical cover $\varphi:C\to T\pb^1$
is \textit{effective}, if the two vertices $x'_{2i-1},x'_{2i}$ in a pair $x_i$
have the same sign,
$\sym_3(\varphi)\subset I_\rho(\varphi)$ and $\symc_c(\varphi)\cap I_\rho(\varphi)=\emptyset$.
\end{definition}
From Definition \ref{def:E-real-TC1}, any pair of vertices $x'_{2i-1}, x'_{2i}$,
$i=1,\ldots,s$, are images of
two adjacent $3$-valent vertices of $C$ depicted in Figure \ref{fig:enhanced-vertices1},
up to reflection along a vertical line.
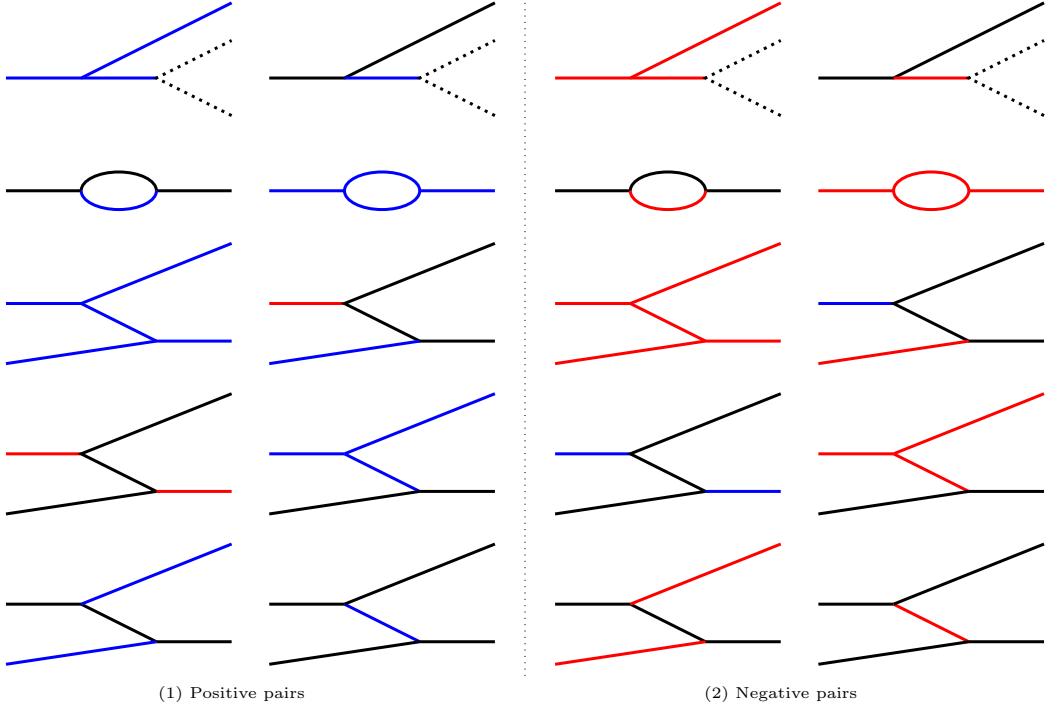
\begin{figure}[ht]
    \centering
    \begin{tikzpicture}
    \draw[line width=0.4mm,blue] (-3,-1)--(-2,-1)--(0,0);
    \draw[line width=0.4mm,blue] (-2,-1)--(-1,-1);
    \draw[line width=0.4mm,dotted] (0,-1.5)--(-1,-1)--(0,-0.5);
    \draw[line width=0.4mm] (0.5,-1)--(1.5,-1)--(3.5,0);
    \draw[line width=0.4mm,blue] (1.5,-1)--(2.5,-1);
    \draw[line width=0.4mm,dotted] (3.5,-1.5)--(2.5,-1)--(3.5,-0.5);
    \draw[line width=0.4mm] (-3,-2.5)--(-2,-2.5);
    \draw[line width=0.4mm] (-1,-2.5)--(0,-2.5);
    \draw[line width=0.4mm] (-1.5,-2.5) +(0:0.5 and 0.25) arc (0:180:0.5 and 0.25);
    \draw[line width=0.4mm,blue] (-1.5,-2.5) +(0:0.5 and 0.25) arc (0:-180:0.5 and 0.25);
    \draw[line width=0.4mm,blue] (0.5,-2.5)--(1.5,-2.5);
    \draw[line width=0.4mm,blue] (2.5,-2.5)--(3.5,-2.5);
    \draw[line width=0.4mm,blue] (2,-2.5) ellipse (0.5 and 0.25);
    \draw[line width=0.4mm,blue] (-3,-4)--(-2,-4)--(0,-3.2);
    \draw[line width=0.4mm,blue] (-2,-4)--(-1,-4.5);
    \draw[line width=0.4mm,blue] (-3,-4.8)--(-1,-4.5)--(0,-4.5);
    \draw[line width=0.4mm,red] (0.5,-4)--(1.5,-4);
    \draw[line width=0.4mm] (3.5,-3.2)--(1.5,-4)--(2.5,-4.5)--(3.5,-4.5);
    \draw[line width=0.4mm,blue] (0.5,-4.8)--(2.5,-4.5);
    \draw[line width=0.4mm,red] (-3,-6)--(-2,-6);
    \draw[line width=0.4mm] (0,-5.2)--(-2,-6)--(-1,-6.5)--(-3,-6.8);
    \draw[line width=0.4mm,red] (-1,-6.5)--(0,-6.5);
    \draw[line width=0.4mm,blue] (0.5,-6)--(1.5,-6);
    \draw[line width=0.4mm,blue] (1.5,-6)--(2.5,-6.5);
    \draw[line width=0.4mm,blue] (1.5,-6)--(3.5,-5.2);
    \draw[line width=0.4mm] (0.5,-6.8)--(2.5,-6.5);
    \draw[line width=0.4mm] (3.5,-6.5)--(2.5,-6.5);
    \draw[line width=0.4mm] (-3,-8)--(-2,-8);
    \draw[line width=0.4mm] (-2,-8)--(-1,-8.5);
    \draw[line width=0.4mm,blue] (-2,-8)--(0,-7.2);
    \draw[line width=0.4mm,blue] (-3,-8.8)--(-1,-8.5);
    \draw[line width=0.4mm] (0,-8.5)--(-1,-8.5);
    \draw[line width=0.4mm] (0.5,-8)--(1.5,-8);
    \draw[line width=0.4mm,blue] (1.5,-8)--(2.5,-8.5);
    \draw[line width=0.4mm] (1.5,-8)--(3.5,-7.2);
    \draw[line width=0.4mm] (0.5,-8.8)--(2.5,-8.5);
    \draw[line width=0.4mm] (3.5,-8.5)--(2.5,-8.5);
    \draw (0,-9.2) node{\tiny $(1)$ Positive pairs};
    \draw[dotted] (3.9,0)--(3.9,-9);
    \draw[line width=0.4mm,red] (4.3,-1)--(5.3,-1)--(7.3,0);
    \draw[line width=0.4mm,red] (5.3,-1)--(6.3,-1);
    \draw[line width=0.4mm,dotted] (7.3,-1.5)--(6.3,-1)--(7.3,-0.5);
    \draw[line width=0.4mm] (7.8,-1)--(8.8,-1)--(10.8,0);
    \draw[line width=0.4mm,red] (8.8,-1)--(9.8,-1);
    \draw[line width=0.4mm,dotted] (10.8,-1.5)--(9.8,-1)--(10.8,-0.5);
    \draw[line width=0.4mm] (4.3,-2.5)--(5.3,-2.5);
    \draw[line width=0.4mm] (6.3,-2.5)--(7.3,-2.5);
    \draw[line width=0.4mm] (5.8,-2.5) +(0:0.5 and 0.25) arc (0:180:0.5 and 0.25);
    \draw[line width=0.4mm,red] (5.8,-2.5) +(0:0.5 and 0.25) arc (0:-180:0.5 and 0.25);
    \draw[line width=0.4mm,red] (7.8,-2.5)--(8.8,-2.5);
    \draw[line width=0.4mm,red] (9.8,-2.5)--(10.8,-2.5);
    \draw[line width=0.4mm,red] (9.3,-2.5) ellipse (0.5 and 0.25);
    \draw[line width=0.4mm,red] (4.3,-4)--(5.3,-4)--(7.3,-3.2);
    \draw[line width=0.4mm,red] (5.3,-4)--(6.3,-4.5);
    \draw[line width=0.4mm,red] (4.3,-4.8)--(6.3,-4.5)--(7.3,-4.5);
    \draw[line width=0.4mm,blue] (7.8,-4)--(8.8,-4);
    \draw[line width=0.4mm] (10.8,-3.2)--(8.8,-4)--(9.8,-4.5)--(10.8,-4.5);
    \draw[line width=0.4mm,red] (7.8,-4.8)--(9.8,-4.5);
    \draw[line width=0.4mm,blue] (4.3,-6)--(5.3,-6);
    \draw[line width=0.4mm] (7.3,-5.2)--(5.3,-6)--(6.3,-6.5)--(4.3,-6.8);
    \draw[line width=0.4mm,blue] (6.3,-6.5)--(7.3,-6.5);
    \draw[line width=0.4mm,red] (7.8,-6)--(8.8,-6);
    \draw[line width=0.4mm,red] (8.8,-6)--(9.8,-6.5);
    \draw[line width=0.4mm,red] (8.8,-6)--(10.8,-5.2);
    \draw[line width=0.4mm] (7.8,-6.8)--(9.8,-6.5);
    \draw[line width=0.4mm] (10.8,-6.5)--(9.8,-6.5);
    \draw[line width=0.4mm] (4.3,-8)--(5.3,-8);
    \draw[line width=0.4mm] (5.3,-8)--(6.3,-8.5);
    \draw[line width=0.4mm,red] (5.3,-8)--(7.3,-7.2);
    \draw[line width=0.4mm,red] (4.3,-8.8)--(6.3,-8.5);
    \draw[line width=0.4mm] (7.3,-8.5)--(6.3,-8.5);
    \draw[line width=0.4mm] (7.8,-8)--(8.8,-8);
    \draw[line width=0.4mm,red] (8.8,-8)--(9.8,-8.5);
    \draw[line width=0.4mm] (8.8,-8)--(10.8,-7.2);
    \draw[line width=0.4mm] (7.8,-8.8)--(9.8,-8.5);
    \draw[line width=0.4mm] (10.8,-8.5)--(9.8,-8.5);
    \draw (7.3,-9.2) node{\tiny $(2)$ Negative pairs};
    \end{tikzpicture}
    \caption{Signed pairs: even edges are drawn in colours, odd edges in black. Dotted edges are
    the symmetric cycles or forks contained in $I_\rho$.}
    \label{fig:enhanced-vertices1}
\end{figure}
Let $\undl z^-$ (resp. $\undl z^+$) denote the set of negative (resp. positive) points or pairs of $(\varphi,\rho)$.
There is a natural signed splitting of $\undl z=\undl x\sqcup\undl y=\undl z^-\sqcup\undl z^+$
into positive and negative points or pairs.

The multiplicity of an effectively coloured
resolving tropical cover $(\varphi,\rho)$ is defined as
\begin{equation}
\label{eq:mult-1}
\mult^\rb(\varphi,\rho)=\frac{2^{|E(I_\rho)\setminus E_c(\varphi)|+|\symc(\varphi)\cap\sym_3(\varphi)|+|\nsym_c(\varphi)|}}{2^{|\sym_2(\varphi)|}}\prod_{e\in \symc(\varphi)\cap I_\rho}\omega(e),
\end{equation}
where $\omega(e)$ is the weight of an edge in the dotted symmetric cycle $e$,
and $E(I_\rho)$ is the set of inner edges with even weights in $C\setminus I_\rho^\circ$.

\begin{remark}
\label{rem:resolving-mult}
From equation (\ref{eq:mult-1}), the multiplicity of an effectively coloured
resolving tropical cover $(\varphi,\rho)$ only depends on the set $I_\rho$,
not on the colouring of even inner edges.
\end{remark}


\begin{definition}
\label{def:compatible-signed}
Let $\Lambda_{s,t}=(\Lambda_1,\ldots,\Lambda_{s+t})$ be an $(s,t)$-tuple with signed splitting 
$\Lambda_{s,t}=(\Lambda_{s,t}^-,\Lambda_{s,t}^+)$.
Let $\undl z=\undl x\sqcup\undl y$ is an $(s,t)$-distribution, 
where $\undl x$ is a set of $s$ pairs, and $\undl y$ is a set of $t$ points.
Let $\undl z^-=\{z^-_1,\ldots,z^-_{|\undl z^-|}\}$ and $\undl z^+=\{z^+_1,\ldots,z^+_{|\undl z^+|}\}$
be a signed splitting of $\undl z$.
The splitting $\undl z=\undl z^-\sqcup\undl z^+$
is \textit{compatible} with the splitting $(\Lambda_{s,t}^-,\Lambda_{s,t}^+)$ of $\Lambda_{s,t}$,
if $|\undl z^-|=|\Lambda^-_{s,t}|$ and the following holds.
\begin{itemize}
    \item $\Lambda_i^*=(3,1,\ldots,1)$ iff $z_i^*\in\undl x$,
     and $\Lambda_j^*=(2,1,\ldots,1)$ iff $z_j^*\in\undl y$, where $*=\pm$.
\end{itemize}
\end{definition}


\begin{theorem}[{\cite[Theorem 5.11]{dlly-23}}]
\label{thm:mr}
Let $g\geq0$, $d\geq1$, $s\geq0$ and $t\geq0$ be four integers,
and suppose that $\lambda$, $\mu$ are two partitions of $d$
such that $2s+t=l(\lambda)+l(\mu)+2g-2$.
Let $\tilde{\undl z}=\undl x\sqcup\undl y$ be an $(s,t)$-distribution with a splitting
$\tilde{\undl z}^-\sqcup\tilde{\undl z}^+$ compatible with a signed splitting $(\Lambda^-_{s,t}, \Lambda^+_{s,t})$
of an $(s,t)$-tuple $\Lambda_{s,t}$.
Then, we have
\begin{equation}\label{eq:mr-resolving}
H^\rb_g(\lambda,\mu;\Lambda^-_{s,t}, \Lambda^+_{s,t})=\sum_{[(\varphi,\rho)]}\mult^\rb(\varphi,\rho),
\end{equation}
where the sum is taken over isomorphism classes $[(\varphi,\rho)]$
of effectively coloured resolving tropical covers of type $(g,\lambda,\mu,\tilde{\undl z})$,
whose positive and negative points or pairs of points reproduce
the splitting $\tilde{\undl z}^+\sqcup\tilde{\undl z}^-$.
\end{theorem}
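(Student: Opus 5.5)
This statement is the modified tropical correspondence theorem quoted from the first paper of the series, \cite[Theorem 5.11]{dlly-23}. In the present paper it is simply invoked. Still, here is how I would prove it from scratch. The starting point is the Markwig--Rau correspondence \cite{mr-2015}. It writes $H^\rb_g(\lambda,\mu;\Lambda^-_{s,t},\Lambda^+_{s,t})$ as a weighted count of coloured real tropical covers of type $(g,\lambda,\mu,\underline b)$. In those covers each triple branch point is the image of a $4$-valent vertex or a $2$-valent genus-one vertex, and the sign of a branch point is read off from the colouring of the adjacent even edges together with the edge weights. The aim is to replace each such non-trivalent vertex by a pair of adjacent trivalent vertices, which I will call the simple resolution. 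Under this resolution the sign of each pair should be governed purely by a colouring rule, as in Figure \ref{fig:enhanced-vertices1}.

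\textbf{Step 1: local classification.} I would split each triple branch point $b_i$ with $r(b_i)=3$ into two nearby points $x'_{2i-1}<x'_{2i}$, forming the pair $x_i$. For a fixed Markwig--Rau cover, I would list all local degenerations of a $4$-valent vertex (respectively a genus-one $2$-valent vertex) into two adjacent trivalent vertices joined by a contractible edge. The balancing condition and the parity of the incident weights leave exactly the configurations (i)--(xiv) of Figure \ref{fig:pair-EC}. The symmetric fork or cycle condition in Definition \ref{def:resolving-cover}(2) singles out the cases in which the real structure forces a pair of equal-weight flags. This step gives a surjection from resolving tropical covers to Markwig--Rau covers, and its fibres can be described locally, vertex by vertex.

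\textbf{Step 2: transport colourings and signs.} I would then check that a real structure (a Markwig--Rau colouring) on the unresolved vertex corresponds to a set of effective colourings of the resolved pair, in the sense of Definition \ref{def:E-real-TC1}. In these colourings both vertices of the pair carry the same sign, and that sign agrees with the sign of the original branch point. This is a finite case check against Figures \ref{fig:coloured-vertices} and \ref{fig:enhanced-vertices1}. The two constraints on $I_\rho$ (namely $\sym_3\subset I_\rho$ and $\symc_c\cap I_\rho=\emptyset$) are what remove the weight-dependence of the sign in the original rule.

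\textbf{Step 3: multiplicities.} I would sum $\mult^\rb(\varphi,\rho)$ over the fibre of each Markwig--Rau cover and compare the result with its Markwig--Rau real multiplicity. Each factor in \eqref{eq:mult-1} has a role:
\begin{itemize}
\item the factor $2^{|E(I_\rho)\setminus E_c|}$ replaces the even-edge weights;
\item the factors $2^{|\symc\cap\sym_3|+|\nsym_c|}$ account for the contractible cycles;
\item the factor $2^{-|\sym_2|}$ accounts for the automorphisms;
\item the product of $\omega(e)$ over $\symc(\varphi)\cap I_\rho$ accounts for the dotted cycles.
\end{itemize}
The main obstacle is this local multiplicity bookkeeping. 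Contractible symmetric cycles, cases (xiii) and (xiv), interact with automorphisms and with the choice of $I_\rho$. I would handle them first, by a direct count of the real structures on the genus-one vertex. After that, summing over all covers gives \eqref{eq:mr-resolving}. Independence from the choice of the $(s,t)$-distribution $\tilde{\underline z}$ follows because the left-hand side does not depend on it.
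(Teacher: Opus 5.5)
This paper does not prove the statement. It imports it from \cite{dlly-23}. Your outline matches the route the introduction attributes to that paper: start from Markwig--Rau's count with $4$-valent and genus-one $2$-valent vertices, resolve each such vertex into a pair of trivalent vertices, and read the sign off the colouring. As a proof, however, it has a concrete gap already in Step 1.

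Balancing and parity do \emph{not} cut the local resolutions down to (i)--(xiv). Pictures (v)--(xii) are exactly the eight parity patterns of a \emph{split-then-merge} resolution of a $2$-in-$2$-out vertex. The merge-then-split resolution is balanced for every parity pattern, yet it is not admitted. Similarly, a $3$-in-$1$-out vertex can be resolved by first merging two flags that are not a symmetric pair (for instance two flags of different weights), and Definition \ref{def:resolving-cover}(2) forbids this. So the admissible pictures are a selection that must be derived from the local geometry of a real triple point. If you resolved ``all local degenerations'', the fibres would be too large and the count would be wrong.

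For the symmetric-pair condition, the reason is that a real $3$-in-$1$-out vertex with three real incoming flags has no real local realization. Take $f=z^a(z-1)^b(z-\alpha)^c$ and put $n=a+b+c$. The numerator of $f'/f$ is $nz^2-(a+c+(a+b)\alpha)z+a\alpha$. A triple point forces $(a+c+(a+b)\alpha)^2=4na\alpha$, which is a quadratic in $\alpha$ with discriminant $-16nabc<0$. Hence $\alpha\notin\rb$. This is also why your ``surjection'' can only be onto Markwig--Rau covers of non-zero weight.

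Step 3 is where the theorem actually lives, and your plan does not supply its input. You never state the local weight of a triple vertex that the fibre sums must reproduce. That weight is a local real Hurwitz count. It depends on the incident weights, on which flags the involution swaps, and on the sign of the branch point. It has to be computed case by case, for example by counting $3$-cycle monodromies compatible with the real structure. Without it, ``sum over the fibre and compare'' cannot be carried out.

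Your assignment of roles to the factors of \eqref{eq:mult-1} is a guess, and part of it is wrong. $\symc(\varphi)\cap\sym_3(\varphi)$ consists of \emph{non-contractible} symmetric cycles attached to pictures (i)--(iv), not of contractible cycles. The specific shape of \eqref{eq:mult-1} is exactly what the missing local computation has to produce, case by case over (i)--(xiv). That shape consists of:
\begin{itemize}
\item even contractible edges omitted from the exponent;
\item only $\sym_2(\varphi)$ in the denominator;
\item the extra factor $2^{|\symc(\varphi)\cap\sym_3(\varphi)|+|\nsym_c(\varphi)|}$;
\item the constraints $\sym_3(\varphi)\subset I_\rho$ and $\symc_c(\varphi)\cap I_\rho=\emptyset$.
\end{itemize}
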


\section{Generalized zigzag numbers}
\label{sec:4}
In this section we introduce the generalized zigzag number that is a generalization of the zigzag number proposed by Rau in \cite{rau2019}, 
then we discuss the properties of this generalization.

Let $g\geq0$, $d\geq1$ be two integers,
and suppose that $\lambda$, $\mu$ are two partitions of $d$.
Let $s,t$ be two non-negative integers such that $2s+t=l(\lambda)+l(\mu)+2g-2$.
Let $\undl z=\undl x\sqcup\undl y$ be an $(s,t)$-distribution.
We assume $2s+t>0$ and $\{\lambda,\mu\}\not\subset\{(2k),(k,k)\}$ from now on (see \cite[Remark 2.5]{rau2019}).

\begin{figure}[ht]
    \centering
    \begin{tikzpicture}
    \draw[line width=0.3mm] (-3,0)--(-1.5,0);
    \draw[line width=0.3mm] (-0.3,0) arc[start angle=0, end angle=360, x radius=0.6, y radius=0.4];
    \draw[line width=0.3mm] (-0.3,0)--(1,0);
    \draw[line width=0.3mm] (2.2,0) arc[start angle=0, end angle=360, x radius=0.6, y radius=0.4];
    \draw[line width=0.3mm] (2.2,0)--(3.7,0);
    \draw[line width=0.3mm,gray,dotted]  (3.7,0.3)--(3.7,-0.3) -- (4,-0.3)--(4,0.3)--(3.7,0.3);
    \draw[line width=0.3mm,gray] (4.2,0) node{\tiny$S$};
    \draw (-3.3,0) node{\tiny $2o$} (-0.3,0.3) node{\tiny$o$} (2.2,0.3) node{\tiny$o$} (-0.3,-0.3) node{\tiny$o$} (2.2,-0.3) node{\tiny$o$};
    \draw[line width=0.3mm]  (-3,1.8)--(-2.5, 1.5) -- (-3,1.2);
    \draw[line width=0.3mm] (-2.5,1.5)--(-1.5,1.5);
    \draw[line width=0.3mm] (-0.3,1.5) arc[start angle=0, end angle=360, x radius=0.6, y radius=0.4];
    \draw[line width=0.3mm] (-0.3,1.5)--(1,1.5);
    \draw[line width=0.3mm] (2.2,1.5) arc[start angle=0, end angle=360, x radius=0.6, y radius=0.4];
    \draw[line width=0.3mm] (2.2,1.5)--(3.7,1.5);
    \draw[line width=0.3mm,gray,dotted]  (3.7,1.8)--(3.7,1.2) -- (4,1.2)--(4,1.8)--(3.7,1.8);
    \draw[line width=0.3mm] (-3.3,1.8) node{\tiny$o$} (-0.3,1.8) node{\tiny$o$} (2.2,1.8) node{\tiny$o$};
    \draw[line width=0.3mm] (-3.3,1.2) node{\tiny$o$} (-0.3,1.2) node{\tiny$o$} (2.2,1.2) node{\tiny$o$};
    \draw[line width=0.3mm,gray] (4.2,1.5) node{\tiny$S$};
    \draw[line width=0.3mm]  (-3,-1.2)--(-2.5,-1.5)--(3.7, -1.5);
    \draw[line width=0.3mm]  (-3,-1.8)--(-2.5,-1.5);
    \draw[line width=0.3mm,gray,dotted]  (3.7,-1.8)--(3.7,-1.2) -- (4,-1.2)--(4,-1.8)--(3.7,-1.8);
   \draw (-3.3,-1.2) node{\tiny$e$}(-3.3,-1.8) node{\tiny$e$};
    \draw[line width=0.3mm,gray](4.2,-1.5) node{\tiny$S$};
    \draw[line width=0.3mm]  (-3,-3)--(3.7, -3);
    \draw[line width=0.3mm,gray,dotted]  (3.7,-3.3)--(3.7,-2.7) -- (4,-2.7)--(4,-3.3)--(3.7,-3.3);
    \draw[line width=0.3mm,gray](4.2,-3) node{\tiny$S$};
   \draw (-3.3,-3) node{\tiny$2e$};
    \end{tikzpicture}
    \caption{\small Tails for generalized zigzag covers. The number of symmetric cycles can be arbitrary.}
    \label{fig:gzc-tail}
\end{figure}
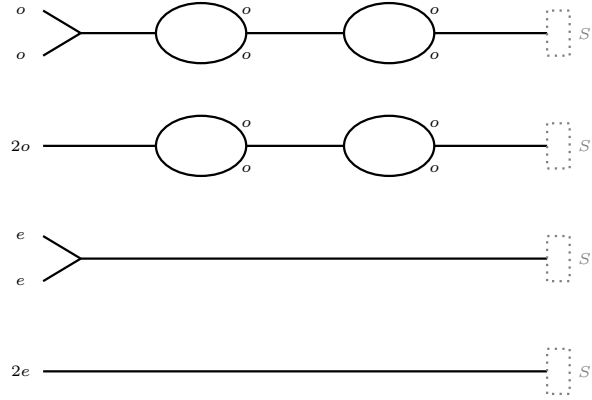
\begin{definition}
\label{def:gen-zig-cov}
A resolving tropical cover $\varphi:C\to T\pb^1$ of type $(g,\lambda,\mu,\undl x\sqcup\undl y)$
is a \textit{generalized zigzag cover}, if there exists a non-empty connected subgraph $S\subset C\setminus\sym_2(\varphi)$
such that:
\begin{enumerate}[$(1)$]
    \item all contractible edges of $C$ are contained in $S$ and $S$ contains at least one inner vertex;
    \item except for even contractible edges, there is no even edge in $S$;
    \item the connected components of $C\setminus S$ are tails depicted in Figure \ref{fig:gzc-tail}.
Moreover, the cycles and forks in Figure \ref{fig:gzc-tail} are all symmetric, and the symmetric cycles are of odd weight.
\end{enumerate}
\end{definition}

A tail in Figure \ref{fig:gzc-tail} of a generalized zigzag cover $\varphi$ is called an \textit{inward tail} (resp. \textit{outward tail}),
if the tail can be oriented pointing from leaves to inner vertices (resp. from inner vertices to leaves) by $\varphi$.
\begin{remark}
\label{rmk:gzc-vs-zc}
Let $s=0$ ({\it i.e.} $\undl x=\emptyset$),  
and $\varphi:C\to T\pb^1$ be a generalized zigzag cover of type $(g,\lambda,\mu,\undl x\sqcup\undl y)$.
Then $\varphi:C\to T\pb^1$ is a variation of the zigzag cover introduced by Rau in \cite[Definition 4.4]{rau2019}. 
Note that in \cite[Definition 4.4]{rau2019}, there is only symmetric fork of odd weight in a zigzag cover.
\end{remark}

\begin{proposition}
\label{prop:colour-GZC}
Let $\varphi:C\to T\pb^1$ be a generalized zigzag cover of type
$(g,\lambda,\mu,\undl z)$.
For any signed splitting $\undl z=\undl z^+\sqcup\undl z^-$ of the
$(s,t)$-distribution $\undl z=\undl x\sqcup\undl y$,
there is at least one effective colouring $\rho$ of the resolving tropical cover $\varphi$
such that $(\varphi,\rho)$ is an effectively coloured resolving tropical cover whose
positive and negative points or pairs of points reproduce the
splitting $\undl z=\undl z^+\sqcup\undl z^-$.
\end{proposition}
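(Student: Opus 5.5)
The plan is to build the colouring $\rho$ explicitly, using the structure of a generalized zigzag cover. Fix the subgraph $S$ from Definition \ref{def:gen-zig-cov}. Choose $I_\rho(\varphi)$ to be the set of all symmetric cycles and symmetric forks lying in the tails of $C\setminus S$, together with $\sym_3(\varphi)$. This set avoids $\symc_c(\varphi)$: a contractible symmetric cycle contains a contractible edge, so it lies in $S$, not in a tail, and it is not in $\sym_3$ by definition. Hence the last two conditions of Definition \ref{def:E-real-TC1} hold automatically. The only remaining work is to colour the even components of $C\setminus I_\rho^\circ$ so that every point of $\undl y$ and every pair in $\undl x$ has the prescribed sign.

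The key observation is that the even edges outside $I_\rho$ fall into three kinds.
\begin{enumerate}
\item Even edges of tails. These are the trunk edges of weight $2o$, $2e$ or the edge after an even fork. Each lies on a tail, which is a chain.
\item Even contractible edges in $S$.
\item Even edges adjacent to a $\sym_3$-fork or cycle, such as the $2k$ edges in Figure \ref{fig:pair-EC}.
\end{enumerate}
Since $S$ contains no even edges other than contractible ones, every component of the even subgraph is small. It is either a single contractible edge, possibly together with adjacent edges of a $\sym_3$ element that has been removed into $I_\rho$, or the even part of a single tail. I will check that these components are pairwise disjoint and that each is incident to essentially one vertex or vertex pair whose sign it controls. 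This lets the colours be chosen independently.

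Next comes the vertex-by-vertex step.
\begin{enumerate}
\item Take a simple vertex $y_j$ in the interior of $S$ that is adjacent only to odd edges. Then an even edge must occur (one even and two odd edges, or all odd is impossible by parity), so such a vertex is adjacent either to a tail's even edge or to nothing even. The second case cannot happen. So each simple vertex is incident to exactly one even component not shared with another vertex whose sign is fixed differently.
\item For a vertex on a tail, the sign is governed by the colour of the even trunk edge, as in Figure \ref{fig:coloured-vertices}. For an odd symmetric cycle tail, the vertices where the $2o$ edge splits are of the type in the left and right columns. Choose red or blue according to the required sign and the orientation of the tail (inward or outward).
\item For a pair $x_i$, consult the list of Figure \ref{fig:enhanced-vertices1}. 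In each of the fourteen configurations, the positive and negative versions differ only in the colour of the even edges in the pair. Choosing the colour of the component containing the contractible edge, or the adjacent even edges, therefore realizes either sign, and both vertices of the pair receive the same sign as required.
\end{enumerate}

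The main obstacle is the independence claim. I must show that no even component is forced to serve two vertices or pairs that demand conflicting signs. For example, one even edge might join a pair $x_i$ to a tail vertex, or two contractible even edges might form a cycle in $\nsym_c$. I would handle this by case analysis on Figure \ref{fig:pair-EC}, using condition (2) of Definition \ref{def:gen-zig-cov}.
\begin{itemize}
\item Even edges of $S$ are contractible, so they are interior to a single pair. If such an edge met another even edge, that edge would be either a tail trunk or a $\sym_3$ edge.
\item For tail trunks, I would verify that the sign of the tail's boundary vertex in $S$ is determined by the same colour choice, so no conflict arises.
\item For a $\nsym_c$ cycle, both edges belong to the same pair, so one colour suffices.
\end{itemize}
Once these cases are checked, the required colouring exists for every signed splitting.
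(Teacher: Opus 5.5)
Your argument fails at its first step. You fix $I_\rho$ (every cycle and fork on a tail, together with $\sym_3(\varphi)$) before looking at the signs, and then claim the even components can be coloured independently. They cannot. An inner even edge has two endpoints, and the sign of each endpoint is read off its colour (Figure \ref{fig:coloured-vertices}). This happens exactly on tail trunks and on the even segments between consecutive symmetric cycles, which is where you assert that no conflict arises.

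Concretely, take $g=1$, $\lambda=(2,1)$, $\mu=(3)$, and the cover in which:
\begin{itemize}
\item the weight $2$ end splits at $b$ into a symmetric cycle $Z$ of weight $1$;
\item $Z$ closes at $c$ into a weight $2$ edge $E_1$;
\item $E_1$ meets the weight $1$ end at $v$.
\end{itemize}
With $S$ the odd string through $v$, this is a generalized zigzag cover with one tail of the second type. If $Z\in I_\rho$, then $c$ is positive iff $E_1$ is blue, and the same holds for $v$. So no colouring with your $I_\rho$ realizes a splitting that gives $\varphi(c)$ and $\varphi(v)$ opposite signs.

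The missing idea, which the paper takes from the proof of Rau's Proposition 4.8, is that putting an odd symmetric cycle or fork of a tail into $I_\rho$ is a free choice. That choice reverses the colour rule at its endpoints. A lone even edge facing two dotted edges gives a positive vertex when blue. Facing two black odd edges, it gives a positive vertex when red. So $I_\rho$ must depend on the splitting. The procedure is:
\begin{itemize}
\item colour the even edges at the pairs by Figure \ref{fig:enhanced-vertices1};
\item colour the even edges at the remaining vertices of $S$ by Figure \ref{fig:coloured-vertices};
\item move outward along each tail, alternately deciding whether the next cycle or fork is dotted (to fix the sign of the vertex just reached) and choosing the colour of the next even edge (to fix the sign of the following vertex).
\end{itemize}
In the example, $Z$ is left undotted exactly when $c$ and $v$ must have opposite signs.

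Note that this freedom is absent for the third type of tail in Figure \ref{fig:gzc-tail}. With an even fork, dotted or not, the fork vertex is positive iff the trunk is blue, so its sign is tied to that of the vertex where the trunk meets $S$. For example, take $g=0$, $\lambda=(2,2,1)$, $\mu=(5)$, $t=2$, and the cover in which the two weight $2$ ends merge first. It satisfies Definition \ref{def:gen-zig-cov}, yet both its vertices are positive iff the weight $4$ edge is blue. So splittings with one positive and one negative point admit no colouring at all.

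Hence no case analysis completes the argument in full generality. Both the statement and the argument need the forks on tails to be odd, as in Rau's setting. The paper's own sketch, which defers to Rau for the tails, only covers that case as well.
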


\begin{proof}
Let $\undl z=\undl z^+\sqcup\undl z^-$ be a signed splitting of the
$(s,t)$-distribution $\undl z=\undl x\sqcup\undl y$.
Then the sign of any pair of vertices $x_i=(x_{2i-1}',x_{2i}')$ is fixed.
The way to colour the edges adjacent to $\varphi^{-1}(x_i)$ is determined by
the local picture of these edges in Figure \ref{fig:enhanced-vertices1}.
Because of the conditions (1) and (2) in Definition \ref{def:gen-zig-cov}, all edges adjacent to $\varphi^{-1}(\undl x)$ are coloured according to Figure \ref{fig:enhanced-vertices1}.
The way to extend the colour of edges to all edges in $S$
is described in the proof of \cite[Proposition 4.8]{rau2019}.
Any vertex $\varphi^{-1}(y_i)$, where $y_i\in\undl y$, is adjacent to an even edge
which is not contractible.
From Definition \ref{def:gen-zig-cov}(2), if the vertex $\varphi^{-1}(y_i)\in S$, it must
be attached with a tail depicted in Figure \ref{fig:gzc-tail}.
When signs of vertices in $\varphi^{-1}(\undl y)$ are fixed,
the way to extend the colour of edges to a tail depicted in Figure \ref{fig:gzc-tail}
is described in the proof of \cite[Proposition 4.8]{rau2019}.
Moreover, if all symmetric forks in tails depicted in Figure \ref{fig:gzc-tail} are of odd weights,
the extension is unique.
\end{proof}

\begin{proposition}
\label{prop:mult-gen-zig-cov}
Let $\varphi:C\to T\pb^1$ be a generalized zigzag cover of type
$(g,\lambda,\mu,\undl z)$,
and $\rho$ be an effective colouring of $\varphi$.
Then the statements in the below are true.
\begin{enumerate}
\item[$(a)$] The multiplicity of $(\varphi,\rho)$ is
\begin{equation}\label{eq:mult-gzc}
\mult^\rb(\varphi,\rho)=2^{|\symc(\varphi)\cap\sym_3(\varphi)|}\prod_{e\in \symc(\varphi)\cap I_\rho}\omega(e).
\end{equation}
\item[$(b)$] If the multiplicity $\mult^\rb(\varphi,\rho)$ is odd,
the multiplicity $\mult^\rb(\varphi,\rho')$ is odd for any effective colouring $\rho'$ of $\varphi$.
\end{enumerate}
\end{proposition}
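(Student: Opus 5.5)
We start from the general multiplicity formula (\ref{eq:mult-1}) and show that, for a generalized zigzag cover, three of its factors are trivial:
\[
E(I_\rho)\setminus E_c(\varphi)=\emptyset,\qquad \nsym_c(\varphi)=\emptyset,\qquad \sym_2(\varphi)=\emptyset .
\]
Once these hold, (\ref{eq:mult-1}) reduces to (\ref{eq:mult-gzc}), which is part (a).

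The verification uses the structure from Definition~\ref{def:gen-zig-cov}: $C=S\cup(\text{tails})$. By condition (2), the only even edges of $S$ are the even contractible edges. Every tail in Figure~\ref{fig:gzc-tail} is one of four kinds:
\begin{itemize}
\item an odd symmetric fork followed by a chain of odd symmetric cycles joined by edges of weight $2o$;
\item a $2o$ end followed by such a chain;
\item a symmetric fork of weight $e$ (possibly even) joined to $S$ by an edge of weight $2e$;
\item a single end of weight $2e$.
\end{itemize}
We then argue each equality in turn.

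\begin{enumerate}
\item \emph{No non-contractible even inner edge survives in $C\setminus I_\rho^\circ$.} The even inner edges outside $S$ lie in tails: the $2o$ connectors between cycles, and the $2e$ edge attaching a fork. Here we need that a colouring produced as in Proposition~\ref{prop:colour-GZC} puts the relevant symmetric cycles and forks into $I_\rho$. The even edges adjacent to a dotted fork or cycle are then ends or lie inside the dotted part. I would instead try to show that every even inner edge outside $S$ is actually an end or connector whose contribution cancels. Honestly, the cleanest route is probably this: for \emph{any} effective colouring, the $2$-power factor $2^{|E(I_\rho)\setminus E_c|}$ counts only non-contractible even inner edges. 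Each such edge in a tail sits between two symmetric cycles or between a fork and $S$. Counting shows it is balanced against $2^{|\sym_2|}$, because each symmetric cycle or fork in a tail contributes exactly one $\sym_2$ element and one adjacent even inner edge. This pairing is the delicate step, and I would check it tail type by tail type.
\item \emph{$\nsym_c(\varphi)=\emptyset$.} Two even contractible edges forming a nonsymmetric cycle would violate the structure of Figure~\ref{fig:pair-EC} inside $S$ together with condition (2). I would check this against pictures (v)--(xiv).
\item \emph{The $\sym_2$ factor.} By definition $S\subset C\setminus\sym_2(\varphi)$, so every element of $\sym_2$ lies in a tail. This is exactly what the pairing in step 1 matches.
\end{enumerate}

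For (b), formula (\ref{eq:mult-gzc}) shows that the multiplicity is odd if and only if two conditions hold: $\symc(\varphi)\cap\sym_3(\varphi)=\emptyset$, and every symmetric cycle in $I_\rho$ has odd weight. The first condition depends only on $\varphi$. For the second, effectiveness forces $\sym_3\subset I_\rho$ and $\symc_c\cap I_\rho=\emptyset$, so the only freedom in $I_\rho$ concerns cycles in $\sym_2$. These lie in tails and are of odd weight by Definition~\ref{def:gen-zig-cov}(3). The remaining cycles that could enter $I_\rho$ are in $\sym_3$, which is already contained in $I_\rho$ for every effective colouring. Hence the product $\prod\omega(e)$ has the same parity for all effective $\rho'$, and (b) follows.

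The main obstacle I expect is the bookkeeping in (a) showing that the powers of $2$ from even edges and from $\sym_2$ cancel exactly, independently of the choice of $I_\rho$ within the tails.
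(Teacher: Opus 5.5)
Your plan for (a) is the paper's: balance $2^{|E(I_\rho)\setminus E_c(\varphi)|}$ against $2^{|\sym_2(\varphi)|}$ tail by tail, and show $\nsym_c(\varphi)=\emptyset$. Your argument for (b) is also the paper's. However, the proposal's headline claim is false, and step 2 has a genuine gap.

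The displayed claim that $E(I_\rho)\setminus E_c(\varphi)$ and $\sym_2(\varphi)$ are empty is false. For the zigzag covers of Figure~\ref{fig:zig1} we have $s=0$, so $E_c(\varphi)=\emptyset$, and both sets have $2m+g$ elements: the weight-one forks and cycles, respectively the weight-two inner edges. Only the equality of cardinalities holds, and that is where your step 1 ends up. The independence of $\rho$ that you flag as the main obstacle is in fact immediate. An element of $I_\rho$ is either a fork, which consists of ends, or a non-contractible symmetric cycle. Such a cycle has odd weight: if it were even, its edges could not lie in $S$ by Definition~\ref{def:gen-zig-cov}(2), and tail cycles are odd by (3). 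Hence $E(I_\rho)$ is the set of all even inner edges for every effective $\rho$. In the tail count, note also that when a tail is attached to $S$ by a contractible edge, the adjacent fork or cycle lies in $\sym_3(\varphi)$ and the edge lies in $E_c(\varphi)$. Both therefore drop out of the count together.

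Step 2 cannot be settled by inspecting pictures (v)--(xiv). Picture (xiv) of Figure~\ref{fig:pair-EC} with $e_1\neq e_2$ \emph{is} a nonsymmetric cycle of two even contractible edges, and it is an admissible pair of a resolving cover. The missing argument is global.

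1. The two edges of weight $e$ adjacent to such a cycle $Cy$ are even and not contractible, so by Definition~\ref{def:gen-zig-cov}(2) they are not in $S$.
2. Both vertices of $Cy$ are trivalent, so these two edges are the only edges leaving $Cy$. Since $S$ is connected and contains $Cy$, it follows that $S=Cy$.
3. If $s>1$, this contradicts the requirement that $S$ contain all contractible edges.
4. If $s=1$, the components of $C\setminus S$ are tails from Figure~\ref{fig:gzc-tail} attached through edges of weight $e$. This forces $\{\lambda,\mu\}\subset\{(2k),(k,k)\}$, which is excluded by the standing assumption of Section~\ref{sec:4}.

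Your proposal never invokes that assumption, and it is indispensable. Take $\lambda=\mu=(8)$, $g=1$, $s=1$, $t=0$, and the single pair (xiv) with $e_1=2$, $e_2=6$. This is a generalized zigzag cover, with $S$ the cycle and the two ends as tails, and it has $|\nsym_c(\varphi)|=1$. So (\ref{eq:mult-1}) gives multiplicity $2$, whereas (\ref{eq:mult-gzc}) would give $1$.
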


\begin{proof}
\begin{enumerate}[$(a)$]
\item By Definition \ref{def:gen-zig-cov},  every even inner edge in $E(I_\rho)\setminus E_c(\varphi)$
is contained in a tail depicted in Figure \ref{fig:gzc-tail},
so the number $|E(I_\rho)\setminus E_c(\varphi)|$ is equal to the number $|\sym_2(\varphi)|$.
We show that the set $\nsym_c(\varphi)=\emptyset$ in the below.
If $s=|\undl x|=0$, the set $\nsym_c(\varphi)=\emptyset$ by definition.
In the case $s=|\undl x|=1$, if the set $\nsym_c(\varphi)\neq\emptyset$,
the two even edges adjacent to the contractible non-symmetric cycle are not in $S$.
The two even deges must be contained in tails depicted in Figure \ref{fig:gzc-tail},
so we have $\{\lambda,\mu\}\subset\{(2k),(k,k)\}$.
This contradicts to our assumption $2s+t>0$ and $\{\lambda,\mu\}\not\subset\{(2k),(k,k)\}$.
Hence, in the case $s=|\undl x|=1$ the set $\nsym_c(\varphi)=\emptyset$.
Now we consider the case that $s=|\undl x|>1$.
If the set $\nsym_c(\varphi)\neq\emptyset$, any cycle $Cy\in\nsym_c(\varphi)$
is contained in $S$. Since the subgraph $S$ is connected, at least one of the two edges
adjacent to $Cy$ is an inner edge contained in $S$. This is a contradiction.
Therefore, $\nsym_c(\varphi)=\emptyset$.
By equation (\ref{eq:mult-1}), the multiplicity of $(\varphi,\rho)$ is given in equation (\ref{eq:mult-gzc}).
\item If $\mult^\rb(\varphi,\rho)$ is odd, we obtain
$\symc(\varphi)\cap\sym_3(\varphi)=\emptyset$ and 
the weight of every symmetric cycle in $\symc(\varphi)\cap I_{\rho}$ is odd.
Note that the set $\symc(\varphi)\cap\sym_3(\varphi)$ does not depend on the colouring $\rho$.
Let $\rho'$ be any effective colouring of $\varphi$.
The multiplicity $\mult^\rb(\varphi,\rho')$ only depends on the set $I_{\rho'}$.
Since $\symc(\varphi)\cap\sym_3(\varphi)=\emptyset$,
symmetric cycles in $\symc(\varphi)\cap I_{\rho'}$ can only be chosen from $\sym_2(\varphi)$.
By Definition \ref{def:gen-zig-cov}(3) and the statement in $(a)$, the multiplicity $\mult^\rb(\varphi,\rho')$ is odd.
\end{enumerate}
\end{proof}

\begin{proposition}\label{prop:odd-mult}
Let $(\varphi,\rho)$ be an effectively coloured resolving tropical cover. Then $\mult^\rb(\varphi,\rho)$ is an integer.
Moreover, if $\mult^\rb(\varphi,\rho)$ is odd,
the resolving tropical cover $\varphi$ has to be a generalized zigzag cover.
\end{proposition}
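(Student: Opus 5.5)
The plan has two parts: integrality, then the structure of covers with odd multiplicity.

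\emph{Integrality.} I read off equation (\ref{eq:mult-1}). The only possible source of a non-integer is the denominator $2^{|\sym_2(\varphi)|}$. Every element of $\sym_2(\varphi)$ is a symmetric cycle or fork not adjacent to a contractible edge. Such an element is adjacent to an even edge of weight $2k$, namely the edge on the other side of the vertex where the two weight-$k$ flags meet. I will build an injection from $\sym_2(\varphi)$ into $(E(I_\rho)\setminus E_c(\varphi))\sqcup \bigl(\symc(\varphi)\cap I_\rho\bigr)$ that pays for each factor $2$, following Rau's argument for ordinary real covers \cite{rau2019}. There are two cases.
\begin{itemize}
\item If the symmetric cycle or fork is not in $I_\rho$, its two edges are even inner edges in $C\setminus I_\rho^\circ$, so they contribute at least one factor $2$.
\item If it is in $I_\rho$ and is a cycle, it contributes the factor $\omega(e)$. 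If instead it is a fork, or a cycle of odd weight, the adjacent weight-$2k$ edge is an even non-contractible inner edge lying outside $I_\rho$. Here I must check that this edge is inner. If it were an end, $C$ would be a single fork plus an end, which forces $\{\lambda,\mu\}\subset\{(2k),(k,k)\}$, a case we have excluded.
\end{itemize}
Distinct elements of $\sym_2$ get distinct adjacent $2k$-edges unless two forks share one edge. That situation is again an excluded degenerate configuration. So the map is injective and the numerator absorbs $2^{|\sym_2|}$.

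\emph{Odd multiplicity forces a zigzag.} Assume $\mult^\rb(\varphi,\rho)$ is odd. Then the injection above must be a bijection onto the set of factors of $2$, with no extra factors. This gives four consequences.
\begin{enumerate}
\item $\symc(\varphi)\cap\sym_3(\varphi)=\emptyset$.
\item $\nsym_c(\varphi)=\emptyset$.
\item Every cycle in $\symc(\varphi)\cap I_\rho$ has odd weight.
\item Every even non-contractible inner edge outside $I_\rho$ is exactly the $2k$-edge adjacent to some element of $\sym_2$, or is one of the two edges of an even symmetric cycle not in $I_\rho$.
\end{enumerate}
I then define $S$ as follows. Take $C$, remove all elements of $\sym_2(\varphi)$ together with the even $2k$-edges attached to them, and keep the connected component containing the contractible edges. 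From consequence (4), every even non-contractible edge belongs to a removed tail. Each removed piece looks like one of the tails of Figure \ref{fig:gzc-tail}. To see this, follow the chain: a $2k$-edge leads to a vertex, whose other edges are odd or again form a symmetric cycle, and the chain must be linear, otherwise another even edge would appear without a paired factor $2$. Consequence (3) gives that these cycles have odd weight.

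\emph{Main obstacle.} The hardest step is showing that $S$ is connected, contains all contractible edges, and contains at least one inner vertex. Contractible edges are joined through odd edges, and by the previous paragraph removing tails cannot disconnect $C$, since each tail hangs off by a single $2k$-edge. The remaining danger is the case where $S$ would have no inner vertex, as for a chain of symmetric cycles between two $2o$-ends. This is again excluded by the assumptions $2s+t>0$ and $\{\lambda,\mu\}\not\subset\{(2k),(k,k)\}$, using the argument in the proof of Proposition \ref{prop:mult-gen-zig-cov}(a). I will handle this edge case by adapting Rau's proof of \cite[Proposition 4.8]{rau2019}, adding the contractible pairs of Figure \ref{fig:pair-EC} as extra vertex types.
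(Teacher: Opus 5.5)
Your outline matches the paper's: the paper collapses each element of $\sym_2(\varphi)$ to an even edge, obtaining $C_1$, and notes that each reinsertion creates a new even inner edge. But your charging step fails as written.

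Your first bullet is false. The two edges of a symmetric fork are ends, while $E(I_\rho)$ only contains inner edges. Moreover, for odd $k$ (the typical case, e.g.\ the odd forks in the first type of tail) the edges of a fork or cycle are odd. So an odd fork or odd cycle in $\sym_2(\varphi)$ outside $I_\rho$ is paid for by nothing in your scheme. The fix is to ignore $I_\rho$ and charge \emph{every} element of $\sym_2(\varphi)$ to an adjacent $2k$-edge $E$. Such an $E$ never lies in a symmetric cycle: its partner would be a second weight-$2k$ edge at the vertex where the $k$-pair meets, but the other two edges there have weight $k$. Hence $E\not\subset I_\rho^\circ$, and $E$ is not contractible by the definition of $\sym_2(\varphi)$.

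Your injectivity claim is also wrong. In a tail fork--$2o$--cycle--$2o$--cycle--$2o$--$S$, consecutive elements share a $2o$-edge, and a cycle has two candidate $2k$-edges, one of which may be an end. What is needed is a chain count. Elements of $\sym_2(\varphi)$ linked through their $2k$-edges form paths of constant weight $k$; a closed necklace would be all of $C$, which has ends. A path with $n$ elements carries at least $n$ inner $2k$-edges unless the path is all of $C$, i.e.\ $\{\lambda,\mu\}\subset\{(2k),(k,k)\}$.

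In the odd case, equality must then hold chain by chain, and your consequence (4) does not give this. Every chain must have exactly one inner $2k$-port, so it is a pendant tail rather than a bridge between two parts of the core; a chain of $n$ cycles with both ports inner carries $n+1$ inner even edges. Your connectivity argument for $S$ (``each tail hangs off by a single $2k$-edge'') relies on exactly this.

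Equality also excludes even-weight cycles in $\sym_2(\varphi)$ whether or not they lie in $I_\rho$: outside $I_\rho$, their own two edges are surplus even inner edges. Your (4) explicitly allows them, and your (3) only covers cycles in $I_\rho$, so ``consequence (3) gives that these cycles have odd weight'' does not follow.

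Finally, your $S$ retains even ends not attached to any element of $\sym_2(\varphi)$. Examples are an end of weight $o_1+o_2$ at a core vertex with two odd incoming edges, or the ends of an even-weight fork in $\sym_3(\varphi)$. These contribute nothing to $\mult^\rb(\varphi,\rho)$, so they do occur when the multiplicity is odd, and they violate condition (2) of Definition~\ref{def:gen-zig-cov}. They must be cut off as tails of the fourth type in Figure~\ref{fig:gzc-tail}. This is why the paper takes $S$ to be $C_1$ with all even ends removed.
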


\begin{proof}
We use the idea of the proof of \cite[Lemma 4.2]{rau2019} to prove this proposition.
Let $(\varphi:C\to T\pb^1,\rho)$ be an effectively coloured resolving tropical cover.
Replace all symmetric forks and symmetric cycles in the set $\sym_2(\varphi)$ with even edges of appropriate weights,
then we obtain a new tropical curve $C_1$.
Since $2s+t>0$ and $\{\lambda,\mu\}\not\subset\{(2k),(k,k)\}$,
the tropical curve $C_1$ has at least one inner vertex. The number of even inner edges in $C_1$ is non-negative.
We can recover $C$ from $C_1$ by inserting symmetric cycles/forks.
When a symmetric cycle or symmetric fork is inserted in $C_1$, at least one new even inner edge is created,
so the number $|\sym_2(\varphi)|\leq |E(I_{\rho})\setminus E_c(\varphi)^\circ|$.
Hence, the multiplicity $\mult^\rb(\varphi,\rho)$ is an integer.
An even inner edge in $C\setminus(E(I_{\rho})\setminus E_c(\varphi)^\circ)$ has to be
an even contractible edge or an even edge in a symmtric cycle in $I_{\rho}$.
The multiplicity of $(\varphi,\rho)$ is odd if and only if
$|\sym_2(\varphi)|=|E(I_{\rho})\setminus E_c(\varphi)^\circ|$,  $|\symc(\varphi)\cap\sym_3(\varphi)|+|\nsym(\varphi)|=0$
and the weight of every symmetric cycle in $\symc(\varphi)\cap I_{\rho}$ is odd.
Now suppose that $\mult^\rb(\varphi,\rho)$ is odd.
If the set $E(I_{\rho})\setminus E_c(\varphi)^\circ$ contains an even cycle ( {\it i.e.} two even edges with the same two endpoints),
we have $|\sym_2(\varphi)|< |E(I_{\rho})\setminus E_c(\varphi)^\circ|$.
This is a contradiction, so in the tropical curve $C$ there is no even cycle in $E(I_{\rho})\setminus E_c(\varphi)^\circ$.
Since the tropical curve $C$ has no even symmetric cycle in $I_{\rho}$ and
$|\sym_2(\varphi)|=|E(I_{\rho})\setminus E_c(\varphi)^\circ|$,
the tropical curve $C_1$ obtained in the above has no even inner edges except even contractible inner edges.
Hence, all elements in $\sym_2(\varphi)$ are contained in tails of the types depicted in Figure \ref{fig:gzc-tail}.
Let $S$ be the subgraph of $C_1$ which is obtained by removing all even ends of $C_1$, then $S$ is also a subgraph of $C$.
It is easy to see that $S$ satisfies the three conditions in Definition \ref{def:gen-zig-cov}, so $\varphi$ is a generalized zigzag cover.
\end{proof}

\begin{remark}
\label{rmk:mult-parity}
Let $\varphi:C\to T\pb^1$ be a generalized zigzag cover of type $(g,\lambda,\mu,\undl z)$,
and $\rho$ be an effective colouring of $\varphi$.
From Proposition \ref{prop:mult-gen-zig-cov}, the parity of the multiplicity $\mult^\rb(\varphi,\rho)$ does
not depend on the colouring $\rho$.
In particular, if $s=0$ ({\it i.e.} $\undl x=\emptyset$), equation (\ref{eq:mult-gzc})
implies that $\mult^\rb(\varphi,\rho)$ is odd (see also \cite[Proposition 4.7]{rau2019}).
Since the multiplicity used in \cite[Definition 3.3]{rau2019} is different from the multiplicity
in \cite[Corollary 5.9]{mr-2015} (see \cite[Remark 3.5]{rau2019}),
in the case $s=0$ there are generalized zigzag covers having EVEN multiplicity according to 
the multiplicity of \cite[Definition 3.3]{rau2019}.
That are generalized zigzag covers having the third type of tails depicted in Figure \ref{fig:gzc-tail}.
If $t=0$ ({\it i.e.} $\undl y=\emptyset$), equation (\ref{eq:mult-gzc}) coincides with
the pure combinatorial computation carried out
in a previous version \cite[Proposition 4.4]{dlly-2023} of this manuscript.
\end{remark}

\begin{lemma}\label{lem:glue1}
Let $o$ be an odd integer, and let $\lambda_1,\lambda_2,\mu_1,\mu_2$ be four partitions such that 
$|\lambda_1|=|\mu_1|+o$ and $|\lambda_2|+o=|\mu_2|$.
Suppose that there exist $n_1$ generalized zigzag covers $\varphi_1:C_1\to T\pb^1$
of type $(g_1,\lambda_1,(\mu_1,o),\undl x_1\sqcup\undl y_1)$,
whose non-empty connected subgraph $S_1$ contains an outward end $l_1$ weighted by $o\in(\mu_1,o)$ and that is not in any symmetric fork.
Assume further that there are $n_2$ generalized zigzag covers $\varphi_2:C_2\to T\pb^1$
of type $(g_2,(\lambda_2,o),\mu_2,\undl x_2\sqcup\undl y_2)$)
whose non-empty connected subgraph $S_2$ contains an inward end $l_2$ weighted by $o\in(\lambda_2,o)$ and that is not in any symmetric fork.
Let $a$ be the largest number in the set $\undl x_1\sqcup\undl y_1$,
and let $b$ be the smallest number in the set $\undl x_2\sqcup\undl y_2$.
We denote by $\undl x_2'\sqcup\undl y_2'$ the set obtained by adding $|a-b|+1$ to each number in $\undl x_2\sqcup\undl y_2$.
Then, there are $n_1\cdot n_2$ generalized zigzag covers $\varphi:C\to T\pb^1$ of type
$(g_1+g_2,(\lambda_1,\lambda_2),(\mu_1,\mu_2),\undl x\sqcup\undl y)$ obtained by gluing $\varphi_1$ with $\varphi_2$ along $l_1$ and $l_2$.
Here, $\undl x=\undl x_1\cup\undl x_2'$ and $\undl y=\undl y_1\cup\undl y_2'$.
\end{lemma}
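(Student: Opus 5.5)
We construct the glued cover explicitly and then check, one by one, the conditions of Definition \ref{def:d-tro-cover}, Definition \ref{def:resolving-cover} and Definition \ref{def:gen-zig-cov}.

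\emph{Construction.} Given $\varphi_1$ and $\varphi_2$ as in the statement, first translate $\varphi_2$ by $|a-b|+1$. Its inner vertices then map to $\undl x_2'\sqcup\undl y_2'$, all of which lie strictly to the right of $a$. Cut the end $l_1$ of $C_1$ at its leaf, and cut the end $l_2$ of $C_2$ at its leaf. Identify them to form a single inner edge $e$ of weight $o$, joining the vertex $v_1\in C_1$ adjacent to $l_1$ with the vertex $v_2\in C_2$ adjacent to $l_2$. Give $e$ length $\varphi_2'(v_2)-\varphi_1(v_1)$, which is positive because $\varphi_1(v_1)\le a<b+|a-b|+1$.

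Equivalently, one can work with Remark \ref{rmk-combin1}. The glued graph $C=C_1\cup_e C_2$ carries the induced orientation, and this orientation has no oriented loops because $e$ is a bridge oriented from $C_1$ to $C_2$. The weights balance. The total order that lists the vertices of $C_1$ first and then those of $C_2$ extends the partial order, so it determines a unique tropical cover. This cover has
\[
\text{genus } b_1(C)=g_1+g_2 \ (\text{since } e \text{ is a bridge}),\quad \text{ends } (\lambda_1,\lambda_2) \text{ at } -\infty,\quad \text{ends } (\mu_1,\mu_2) \text{ at } +\infty .
\]
The point set is $\undl x\sqcup\undl y$, and the pairs in $\undl x_1$ and $\undl x_2'$ remain pairs with no point of $\undl y$ between their two members. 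Hence we obtain a tropical cover of the stated type.

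\emph{Resolving and zigzag conditions.} Every vertex pair of Figure \ref{fig:pair-EC} lies entirely in $C_1$ or entirely in $C_2$, and its local picture is unchanged. The only change to local structure is that $l_1$ and $l_2$ become the single edge $e$. Since neither $l_1$ nor $l_2$ lies in a symmetric fork, no symmetric fork required by condition (2) of Definition \ref{def:resolving-cover} is destroyed. Also, $e$ is not part of any symmetric cycle, since it is a bridge. Hence $\sym(\varphi)=\sym(\varphi_1)\sqcup\sym(\varphi_2)$, the same holds for $\sym_2$ and $\sym_3$, and $\varphi$ is resolving.

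For the zigzag condition, put $S=S_1\cup_e S_2$. This set is connected because $e$ joins $S_1\ni l_1$ to $S_2\ni l_2$. It contains all contractible edges and at least one inner vertex. Its only new edge is $e$, which has odd weight $o$, so $S$ contains no even edges other than even contractible ones. The components of $C\setminus S$ are exactly the tails of $\varphi_1$ and $\varphi_2$, so they are of the types in Figure \ref{fig:gzc-tail}. Moreover $S\subset C\setminus\sym_2(\varphi)$. Therefore $\varphi$ is a generalized zigzag cover.

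\emph{Counting.} The map $(\varphi_1,\varphi_2)\mapsto\varphi$ is injective. Indeed, the vertices over $\undl x_1\sqcup\undl y_1$ are exactly those lying left of a gap that $e$ alone crosses. Cutting $C$ along the unique edge over that gap recovers $C_1$ and $C_2$, together with $l_1$ and $l_2$ and their weight $o$. Up to isomorphism, the pair $(\varphi_1,\varphi_2)$ is therefore determined by $\varphi$, which gives $n_1n_2$ distinct glued covers.

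The main obstacle is this injectivity step. One must make sure that isomorphism classes on the two sides correspond: an isomorphism of glued covers has to restrict to isomorphisms of the two pieces, and the isomorphism classes of the pieces must not depend on the choice of the distinguished ends $l_1$ and $l_2$. The plan is to handle this by tracking the distinguished ends as part of the data counted by $n_1$ and $n_2$.
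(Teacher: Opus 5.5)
Your proposal is correct and is essentially the paper's proof: glue $l_1$ to $l_2$, order the vertices of $C_1$ before those of $C_2$, apply Remark~\ref{rmk-combin1}, and get injectivity by restricting an isomorphism of glued covers to the two pieces. Your check of the resolving and zigzag conditions via $S=S_1\cup_e S_2$ is more explicit than the paper's ``by construction''. Only minor fixes are needed: $e$ is not the only edge over the gap $(a,b+|a-b|+1)$, since the ends of weights $\mu_1$ and $\lambda_2$ also cross it, but it is the only edge joining vertices over $\undl x_1\sqcup\undl y_1$ to vertices over $\undl x_2'\sqcup\undl y_2'$, and because isomorphisms commute with the maps to $T\pb^1$ they preserve this partition, so the distinguished-end issue you flag resolves itself; also, $e$ should have length $(\varphi_2'(v_2)-\varphi_1(v_1))/o$.
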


\begin{proof}
Let $\varphi_1:C_1\to T\pb^1$ (resp. $\varphi_2:C_2\to T\pb^1$) be a generalized zigzag cover of type 
$(g_1,\lambda_1,(\mu_1,o),\undl x_1\sqcup\undl y_1)$ (resp. $(g_2,(\lambda_2,o),\mu_2,\undl x_2\sqcup\undl y_2)$)
satisfying the assumption in Lemma \ref{lem:glue1}.
According to Remark \ref{rmk-combin1}, the graph $C_1$ (resp. $C_2$) is oriented and weighted by the map $\varphi_1$ (resp. $\varphi_2$).
Furthermore, the map $\varphi_1$ (resp. $\varphi_2$) induces a total order on inner vertices of $C_1$ (resp. $C_2$), denoted by $O(C_1)$ (resp. $O(C_2)$).

We glue the end $l_1\subset C_1$ with the end $l_2\subset C_2$ to obtain a new graph $C$.
In $C_1$ (resp. $C_2$) the end $l_1$ (resp. $l_2$) is oriented outward (resp. inward),
meaning it points from its inner vertex to its leaf (resp. from its leaf to its inner vertex).
The gluing procedure respects the orientations and weight functions on $C_1$ and $C_2$.
Specifically,
an edge $E$ in the new graph $C$ is oriented and weighted by the orientation and weight of $C_1$, if $E$ is contained in $C_1$. Otherwise, $E$ adopts the orientation and weight of $C_2$.
Consequently, the graph $C$ is oriented and weighted. 
We choose $O(C_1),O(C_2)$ as a total order on inner vertices of $C$.
This total order is compatible with the orientation, meaning
it extends the partial order of inner vertices of $C$ induced by the orientation on $C$.
From \cite[Remark 5.3]{rau2019}, this total order determines a unique tropical cover
$\varphi:C\to T\pb^1$ of type $(g_1+g_2,(\lambda_1,\lambda_2),(\mu_1,\mu_2),\undl x\sqcup\undl y)$,
where $\undl x=\undl x_1\cup\undl x_2'$ and $\undl y=\undl y_1\cup\undl y_2'$.
By the construction, we know that the tropical cover $\varphi:C\to T\pb^1$ is a generalized zigzag cover.

Let $\varphi_1':C_1'\to T\pb^1$ be a generalized zigzag cover of type 
$(g_1,\lambda_1,(\mu_1,o),\undl x_1\sqcup\undl y_1)$,
and let $\varphi_2':C_2'\to T\pb^1$ be a generalized zigzag cover of type 
$(g_2,(\lambda_2,o),\mu_1,\undl x_2\sqcup\undl y_2)$.
The aforementioned construction yields a generalized zigzag cover $\varphi':C'\to T\pb^1$
of type $(g_1+g_2,(\lambda_1,\lambda_2),(\mu_1,\mu_2),\undl x\sqcup\undl y)$ by gluing $\varphi_1'$ with $\varphi_2'$.
If $\varphi':C'\to T\pb^1$ is isomorphic to the generalized zigzag cover $\varphi:C\to T\pb^1$,
then $\varphi_1':C_1'\to T\pb^1$ is isomorphic to $\varphi_1:C_1\to T\pb^1$,
and $\varphi_2':C_2'\to T\pb^1$ is isomorphic to $\varphi_2:C_2\to T\pb^1$.
Hence, the statement is proved.
\end{proof}

\begin{definition}
\label{def:gen-zig-num}
Let $\zl_g(\lambda,\mu;\Lambda_{s,t})$ denote the set of isomorphism classes
of generalized zigzag covers of type $(g,\lambda,\mu,\undl z)$,
where the $(s,t)$-distribution $\undl z=\undl x\sqcup\undl y$ is
compatible with the $(s,t)$-tuple $\Lambda_{s,t}$.
The \textit{generalized zigzag number} of type $(g,\lambda,\mu,\undl z)$ is
the number
$$
Z_g(\lambda,\mu;\Lambda_{s,t})=\sum_{[\varphi]\in\zl_g(\lambda,\mu;\Lambda_{s,t})}2^{m(\varphi)}.
$$
Here,
$m(\varphi)=0$ if $\mult^\rb(\varphi,\rho)$ is odd for an effective colouring $\rho$, otherwise, $m(\varphi)=1$.
\end{definition}

\begin{remark}
\label{rem:inv-gen-zig-num}
The number $Z_g(\lambda,\mu;\Lambda_{s,t})$ does not depend on the signed splitting
of $\undl z$.
In fact, the parity of the multiplicity $\mult^\rb(\varphi,\rho)$
depends only on the generalized zigzag cover $\varphi$,
and not on the colouring $\rho$ of $\varphi$ (see Remark \ref{rmk:mult-parity}).
A generalized zigzag cover $\varphi:C\to T\pb^1$ is a resolving tropical cover with
additional combinatorial conditions, namely the three conditions in Definition \ref{def:gen-zig-cov}.
These three conditions are independent of both the colouring $\rho$ and the signed splitting of $\undl z$. 
Therefore, the number $Z_g(\lambda,\mu;\Lambda_{s,t})$ is independent of the signed splitting of $\undl z$.
\end{remark}

\begin{proposition}
\label{thm:low-bou1}
Fix two integers $d\geq1$, $g\geq0$, and fix two partitions $\lambda$, $\mu$ of $d$.
Let $s,t$ be two non-negative integers such that $2s+t=l(\lambda)+l(\mu)+2g-2$.
Assume that $2s+t>0$ and $\{\lambda,\mu\}\not\subset\{(2k),(k,k)\}$.
Let $\Lambda_{s,t}$ be an $(s,t)$-tuple with a signed splitting $(\Lambda_{s,t}^-,\Lambda_{s,t}^+)$.
Then, we have
$$
Z_g(\lambda,\mu;\Lambda_{s,t})\leq H_g^\rb(\lambda,\mu;\Lambda_{s,t}^-,\Lambda_{s,t}^+)\leq H_g^\cb(\lambda,\mu;s,t),
$$
and
$$
Z_g(\lambda,\mu;\Lambda_{s,t})\equiv H_g^\rb(\lambda,\mu;\Lambda_{s,t}^-,\Lambda_{s,t}^+)\mod 2.
$$
\end{proposition}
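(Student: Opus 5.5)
The plan is to write the real number via the modified tropical correspondence theorem (Theorem \ref{thm:mr}) and then compare it termwise with the zigzag count. Choose an $(s,t)$-distribution $\tilde{\undl z}=\undl x\sqcup\undl y$ compatible with $\Lambda_{s,t}$, together with the signed splitting $\tilde{\undl z}^-\sqcup\tilde{\undl z}^+$ reproducing $(\Lambda^-_{s,t},\Lambda^+_{s,t})$. Then $H^\rb_g(\lambda,\mu;\Lambda^-_{s,t},\Lambda^+_{s,t})$ is the sum of $\mult^\rb(\varphi,\rho)$ over isomorphism classes of effectively coloured resolving tropical covers with this sign pattern. Group the sum by the underlying resolving cover $\varphi$:
$$H^\rb_g=\sum_{[\varphi]}\ \sum_{\rho}\mult^\rb(\varphi,\rho),$$
where the inner sum runs over effective colourings of $\varphi$ realizing the splitting. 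By Proposition \ref{prop:odd-mult}, every term is a non-negative integer. The outer sum splits into generalized zigzag covers and the remaining resolving covers.

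\textbf{Lower bound.} For a generalized zigzag cover $\varphi$, Proposition \ref{prop:colour-GZC} guarantees at least one admissible $\rho$. If $m(\varphi)=0$, the inner sum is at least $1=2^{m(\varphi)}$. If $m(\varphi)=1$, then every $\mult^\rb(\varphi,\rho)$ is even by Proposition \ref{prop:mult-gen-zig-cov}(b), hence at least $2$. Either way the inner sum is at least $2^{m(\varphi)}$. Discarding the non-zigzag covers, which contribute non-negatively, gives $Z_g(\lambda,\mu;\Lambda_{s,t})\le H^\rb_g$. Remark \ref{rem:inv-gen-zig-num} ensures that the count does not depend on the chosen splitting.

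\textbf{Upper bound.} The inequality $H^\rb_g\le H^\cb_g(\lambda,\mu;s,t)$ comes from the classical side. Each real covering $(\pi,\tau)$ with real branch points in the prescribed positions is in particular a complex covering with those branch points. The number of real structures $\tau$ on a given $\pi$ relates the weights $1/|\aut^\rb|$ and $1/|\aut^\cb|$: the real structures compatible with $\pi$ form a torsor-type set under $\aut^\cb(\pi)$ acting by conjugation, and $\aut^\rb(\pi,\tau)$ is the stabilizer. By the orbit–stabilizer theorem, the real classes lying over $[\pi]$ contribute $\#\{\tau\}/|\aut^\cb(\pi)|\le 1/|\aut^\cb(\pi)|\cdot|\aut^\cb(\pi)|/|\aut^\cb(\pi)|$-type bounds. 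The point to verify carefully is that distinct real structures on the same $\pi$ differ by an element of $\aut^\cb(\pi)$, so that the total real weight over $[\pi]$ is at most $1/|\aut^\cb(\pi)|$ times the number of $\aut^\cb$-orbits of real structures. I would make this precise following the standard argument in \cite{mr-2015}.

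\textbf{Parity.} Reduce the grouped sum mod $2$. By Proposition \ref{prop:odd-mult}, non-zigzag covers have only even multiplicities and contribute $0$. For a zigzag cover with $m(\varphi)=1$, all terms are even, matching $2^{m}\equiv0$. For $m(\varphi)=0$, all terms are odd by Proposition \ref{prop:mult-gen-zig-cov}(b), so the contribution is congruent to the number of admissible effective colourings. This is the main obstacle: we must show that this number is odd. I would try to prove that it is exactly $1$ by refining the proof of Proposition \ref{prop:colour-GZC}. When $m(\varphi)=0$ we have $\symc\cap\sym_3=\emptyset$, the tails carry only odd symmetric cycles and forks, and the colour of each even edge is forced by the sign of its adjacent vertex or pair (Figure \ref{fig:enhanced-vertices1}, tails as in \cite[Proposition 4.8]{rau2019}). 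The remaining freedom, namely the choice of $I_\rho$ among odd symmetric cycles and forks, is expected either to be forced or to be absorbed in isomorphism classes. If exact uniqueness fails, the fallback is to pair off extra colourings by an involution that toggles membership in $I_\rho$ of one element of $\sym_2(\varphi)$, whose fixed-point set is the unique canonical colouring.
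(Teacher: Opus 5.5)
Your lower-bound step is the paper's proof; the paper only cites Propositions \ref{prop:colour-GZC}, \ref{prop:mult-gen-zig-cov} and Definition \ref{def:gen-zig-num}. The other two steps have genuine gaps, and in both cases the gap is exactly where the statement itself fails.

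\textbf{Upper bound.} Your orbit--stabilizer count is wrong. Let $R(\pi)$ be the set of real structures on $\pi$. Then $\aut^\cb(\pi)$ acts on $R(\pi)$ by conjugation with stabilizers $\aut^\rb(\pi,\tau)$, so the real classes over $[\pi]$ contribute exactly $|R(\pi)|/|\aut^\cb(\pi)|$. Since $R(\pi)$ lies in one coset $\tau_0\aut^\cb(\pi)$, this is at most $1$, but it exceeds $1/|\aut^\cb(\pi)|$ as soon as $|R(\pi)|\geq 2$. What you need is $\aut^\cb(\pi)=1$, and your sketch never establishes it.

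It does hold when $t>0$. A nontrivial deck transformation is a fixed-point-free permutation centralizing the monodromy. Commuting with a transposition $(a\,b)$ forces it to swap $a$ and $b$, so it is an involution, and an involution cannot centralize a $3$-cycle. If $s=0$, the transpositions act trivially on its $d/2$ orbits. The monodromies at $0$ and $\infty$ then induce mutually inverse permutations of these orbits, which must form a single cycle, forcing $\lambda,\mu\in\{(2k),(k,k)\}$.

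For $t=0$ it fails. Take $\lambda=\mu=(3)$, $g=1$, $s=1$. The only cover is $w^3=z(z-b)$ with $\aut^\cb\cong\mathbb{Z}/3$, so $H^\cb=1/3$. Its three real structures $(z,w)\mapsto(\bar z,\zeta^j\bar w)$, $\zeta=e^{2\pi i/3}$, form one conjugation orbit with trivial stabilizer, so $H^\rb=1$. This agrees with Theorem \ref{thm:mr}: there is a single resolving cover, of shape (xiii), with multiplicity $1$. So this inequality needs an extra hypothesis such as $t>0$, and the paper's one-line proof does not address it.

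\textbf{Parity.} You correctly isolate the step the paper skips: for $m(\varphi)=0$ the number of admissible colourings must be odd. Forced propagation, as in Rau's Proposition 4.8, gives exactly one colouring on $S$ and on tails of the first, second and fourth type in Figure \ref{fig:gzc-tail}. It does not on the third type, an even symmetric fork of weight $e$ feeding a $2e$-edge. At the fork vertex, the all-even rule (fork not in $I_\rho$, so both ends and the $2e$-edge form one component) and the dotted-pair rule (fork in $I_\rho$) of Figure \ref{fig:coloured-vertices} demand the same colour on the $2e$-edge. Such a tail therefore carries either two colourings of equal multiplicity or none. Your toggling involution is fixed-point-free there. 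Toggling an odd element of $\sym_2(\varphi)$ changes the signs of its vertices, so it does not act on admissible colourings at all.

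This breaks the statement, and Proposition \ref{prop:colour-GZC} with it. Take $\lambda=(2,2)$, $\mu=(3,1)$, $g=0$, $t=2$. There are two tropical covers:
\begin{itemize}
\item $\varphi_a$, where the two weight-$2$ ends merge into a weight-$4$ edge that splits into $3$ and $1$;
\item $\varphi_b$, with an inner edge of weight $1$.
\end{itemize}
Both are generalized zigzag covers of multiplicity $1$, so $Z=2$. But a blue (resp.\ red) $4$-edge makes the first vertex of $\varphi_a$ positive and the second negative (resp.\ the reverse), whether or not the fork is in $I_\rho$. So $\varphi_a$ contributes $0$ when the two branch points have equal signs and $2$ otherwise, and $H^\rb$ equals $1$ or $3$ respectively.

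This is confirmed algebraically. Normalize the triple pole to $\infty$ and the simple pole to $0$, so $f=cP(x)^2/x$ with simple critical points $x_1,x_2$. The ratio of branch values is $t(t+3)^2/(3t+1)^2$ with $t=x_2/x_1$. This is increasing on $t>0$ and non-positive for $t<0$, giving one real cover for equal signs and three otherwise.

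Hence both $Z\leq H^\rb$ and the congruence fail here. Once third-type tails are excluded (Rau's odd-fork condition), your uniqueness argument supplies the missing parity step.
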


\begin{proof}
It is straightforward from Proposition \ref{prop:colour-GZC},
Proposition \ref{prop:mult-gen-zig-cov} and Definition \ref{def:gen-zig-num}.
\end{proof}

\section{Asymptotics for real double Hurwitz numbers with only triple ramification}
\label{sec:5}
When $t=0$, 
the number $Z_g(\lambda,\mu;\Lambda_{s,0})$ provides a lower bound for
real double Hurwitz numbers with only triple ramification.
In this section, we study the logarithmic asymptotic growth of
$H_g^\rb(\lambda,\mu;\Lambda_{s,0}^-,\Lambda_{s,0}^+)$ when $s\to\infty$ and $g$ is fixed.

\subsection{Non-vanishing theorem}

We recall the \textit{tail decomposition} of
a partition $\lambda=(\lambda_1,\ldots,\lambda_n)$
from \cite[Section $5$]{rau2019}.
We put $\lambda^2=(\lambda_1,\lambda_1,\ldots,\lambda_n,\lambda_n)$.
The tail decomposition of $\lambda$ is
\begin{equation}
\label{eq:tail-decomp1}
\lambda=(\lambda_{o,o}^2,\lambda_{e,e}^2,\lambda_o,\lambda_e),
\end{equation}
where $\lambda_{o,o}$, $\lambda_{e,e}$, $\lambda_o$ and $\lambda_e$
are partitions such that:
\begin{itemize}
    \item any entry of $\lambda_{o,o}$ is an odd integer;
    \item any entry of $\lambda_{e,e}$ is an even integer;
    \item any entry of $\lambda_o$ is odd and appears only once in $\lambda_o$;
    \item any entry of $\lambda_e$ is even and appears only once in $\lambda_e$.
\end{itemize}
Given a tail decomposition $\lambda=(\lambda_{o,o}^2,\lambda_{e,e}^2,\lambda_o,\lambda_e)$,
we use $\lambda\setminus\lambda_e$ to denote the partition with tail decomposition $(\lambda_{o,o}^2,\lambda_{e,e}^2,\lambda_o)$.

\begin{theorem}
\label{thm:non-vanish-UEC}
Let $g\geq0$ and $d\geq1$ be two integers,
and let $\lambda$, $\mu$ be two partitions of $d$
such that $l(\lambda)\equiv l(\mu)\mod 2$.
Let $\undl z$ be an $(s,0)$-distribution, where $s=\frac{1}{2}(l(\lambda)+l(\mu)+2g-2)$.
Suppose the following conditions hold:
\begin{equation}
\label{eq:condition-partition1}
\begin{aligned}
\min(l(\lambda_{o}),l(\mu_{o}))>0, l(\lambda_e)=l(\mu_e)=0,\\
|l(\lambda_o)-l(\mu_o)|<2\min(l(\lambda_{o,o}),l(\mu_{o,o})).
\end{aligned}
\end{equation}
Then, there exists at least one generalized zigzag cover of
type $(g,\lambda,\mu,\undl z)$.
\end{theorem}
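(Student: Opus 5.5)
We will construct an explicit generalized zigzag cover, building the graph combinatorially and then invoking Remark~\ref{rmk-combin1} to realize it as a tropical cover. We fix an orientation, a weight function and a total order on the inner vertices compatible with the order of $\undl z$. Since $t=0$, every inner vertex lies in a contractible pair $x_i$, so we must group the $2s$ inner vertices into $s$ adjacent pairs of the shapes in Figure~\ref{fig:pair-EC}. The hypotheses $l(\lambda_e)=l(\mu_e)=0$ mean that every even entry of $\lambda,\mu$ appears in pairs. These pairs $(e,e)$ will be realized as inward/outward forked tails of the third type in Figure~\ref{fig:gzc-tail}. Each such tail is merged into a single even end of weight $2e$, and that end must attach to $S$ at a vertex of a pair of type (v),(vi),(vii),(viii) or similar, consistent with condition (2) of Definition~\ref{def:gen-zig-cov}. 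Alternatively the pair $(e,e)$ can be used directly as the symmetric fork in the first row (types (i),(iii)), which lies in $\sym_3$ and is allowed. I would prefer the latter, since it keeps $S$ free of even non-contractible edges.

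The core of $S$ will be an odd ``zigzag'' chain, as in Rau's construction \cite[Section 5]{rau2019}. Order the odd singletons of $\lambda_o$ and $\mu_o$ alternately and connect them through a path of odd edges, with the contractible pairs of types (xi),(xii) or (vi),(vii) serving as elementary steps. Each such step merges two odd ends into an odd edge and later splits again, via a short even contractible edge. The pairs in $\lambda_{o,o}^2$, $\mu_{o,o}^2$ are handled differently. A pair $(o,o)$ can enter as a symmetric fork adjacent to a contractible even edge $2o$, as in type (ii)/(iv), which joins the chain with an odd edge $o'\mapsto o'+2o$. Alternatively it can be split and used as two odd ends feeding the chain.

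The inequality $|l(\lambda_o)-l(\mu_o)|<2\min(l(\lambda_{o,o}),l(\mu_{o,o}))$ is exactly what lets us balance the chain. Each imbalance of two in the number of odd ends on one side is corrected by splitting one pair from $\lambda_{o,o}^2$ or $\mu_{o,o}^2$ into two singletons on the deficient side. The condition $\min(l(\lambda_o),l(\mu_o))>0$ guarantees that the chain has odd endpoints on both sides, so it is nonempty and connected. Genus is then added by inserting genus-one pairs of types (xiii)/(xiv) (an odd or even cycle of two contractible edges) into an odd edge of the chain. Each insertion raises $g$ by one and uses one pair, matching the count $2s=l(\lambda)+l(\mu)+2g-2$. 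Finally we verify that the number of pairs used equals $s$ by an Euler characteristic count. We also check that weights stay positive (choose the chain's intermediate weights by accumulating partial sums, which always dominate) and that the orientation is acyclic, so Remark~\ref{rmk-combin1} yields a cover with $\varphi(\text{pairs})=\undl x$.

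The main obstacle is the parity and pairing bookkeeping. Every contractible pair must match one of the fourteen local pictures, including parity constraints on edges such as $o_1,o_2,e_1,e_2$, and the chain must alternate so that all non-contractible edges in $S$ are odd. I would first treat the base case $g=0$ with $l(\lambda_o)=l(\mu_o)=1$ and one pair in each of $\lambda_{o,o},\mu_{o,o}$, checking it by hand. I would then argue inductively, adding one odd pair, one imbalance correction, or one genus loop at a time. Each step would be a local gluing in the spirit of Lemma~\ref{lem:glue1}, verifying in each step that the new pair is of an allowed type.
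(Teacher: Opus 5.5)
Your overall architecture is the paper's: odd strings joined by even contractible edges, symmetric-fork tails for the repeated entries, borrowing $|l(\lambda_o)-l(\mu_o)|/2$ pairs from $\lambda_{o,o}^2$ or $\mu_{o,o}^2$ to equalize the numbers of odd ends, and genus added by two-edge loops on an odd end. The genuine gap is the step you dismiss with ``choose the chain's intermediate weights by accumulating partial sums, which always dominate''.

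In the genus-zero core, the end weights and the balancing condition determine every edge weight. So the only freedom is the arrangement: which $\lambda_o$-end starts which string, which $\mu_o$-end leaves it, and where the fork tails sit. A bad arrangement forces a connecting contractible edge to have weight $\le 0$. For example, $\lambda=(5,3,3,1)$ and $\mu=(9,1,1,1)$ satisfy (\ref{eq:condition-partition1}), with $\lambda_o=(5,1)$, $\lambda_{o,o}=(3)$, $\mu_o=(9,1)$, $\mu_{o,o}=(1)$. A first string that starts at the $\lambda$-end $1$, absorbs the fork $(3,3)$ and then emits the $\mu$-end $9$ would need a connecting edge of weight $7-9<0$. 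Proving that a good arrangement always exists is the real content of the paper's proof, and your proposal does not supply it.

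Your suggested induction does not avoid this. You cannot add an odd pair or an imbalance correction to one partition alone, since $|\lambda|=|\mu|$ must be preserved. Splitting $(\lambda,\mu)$ into pieces to be glued by Lemma~\ref{lem:glue1} along odd ends requires the same kind of balanced cuts.

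The missing idea is a specific arrangement plus a short exchange argument.
\begin{itemize}
\item Attach all $\lambda$-side fork tails at the start of the first string and all $\mu$-side fork tails at the end of the last one.
\item Sort $\lambda_o$ and $\mu_o$ increasingly, and let the $i$-th string run from $\lambda_o^{k-i+1}$ to $\mu_o^{i}$. So the largest remaining $\lambda_o$ entry is paired with the smallest remaining $\mu_o$ entry.
\item The $i$-th connecting edge then has weight $E_i=2|\lambda_{o,o}|+2|\lambda_{e,e}|+\sum_{j=k-i+1}^{k}\lambda_o^j-\sum_{j=1}^{i}\mu_o^j$, which is even.
\end{itemize}
To see $E_i>0$, suppose $E_i\le 0$. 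Then $\lambda_o^{k-i}<\mu_o^{i+1}$; otherwise the $i$ largest entries of $\lambda_o$ strictly exceed the $i$ smallest of $\mu_o$ termwise, using that the side not augmented by borrowed pairs has distinct entries. Hence the $k-i$ smallest entries of $\lambda_o$ lie strictly below the $k-i$ largest of $\mu_o$. Summing gives $d<\sum_j\mu_o^j\le d$, a contradiction.

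Finally, several local types you cite violate your own requirement (Definition~\ref{def:gen-zig-cov}(2)) that non-contractible edges of $S$ be odd.
\begin{itemize}
\item Types (i)/(iii) carry even string edges $e,e+2k$. Use (ii)/(iv) for the pairs $(e,e)$ as well; the fork weight $k$ may be even.
\item Among the H-shaped pairs in Figure~\ref{fig:pair-EC}, only (xii) has all four outer edges odd. Types (vi), (vii), (xi) each have even non-contractible edges, and ``merges two odd ends into an odd edge'' is impossible by parity.
\item Only (xiii), not (xiv), can be inserted into an odd edge.
\end{itemize}
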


\begin{proof}
Since $l(\lambda_o)+l(\mu_o)\equiv|\lambda|+|\mu|\mod 2$, $l(\lambda_o)+l(\mu_o)$ is even.
Suppose that $l(\lambda_o)-l(\mu_o)=2a$.
We construct a generalized zigzag cover of type $(g,\lambda,\mu,\undl z)$ by considering the three cases:
$a>0,a=0$ and $a<0$.

Case $a>0$: From the assumption $|l(\lambda_o)-l(\mu_o)|<2\min(l(\lambda_{o,o}),l(\mu_{o,o}))$,
it is possible to select $2a$ entries from $(\mu_{o,o}^2)$ and incorporate them into the partition $\mu_o$.
Suppose $(\mu_{o,o}^2)=(\bar\mu_{o,o}^2,\tilde\mu_{o,o}^2)$, and $l(\tilde\mu_{o,o})=a$.
The two partitions $(\lambda_o)$ and $(\mu_o,\tilde\mu_{o,o}^2)$ have the same length,
and we suppose $k=l(\lambda_o)=l(\mu_o,\tilde\mu_{o,o}^2)$.
Assume that the entries $\lambda_o^1,\ldots,\lambda_o^k$
in $\lambda_o$ satisfy $\lambda_o^1<\cdots<\lambda_o^k$,
and the entries $\mu_o^1,\ldots,\mu_o^k$ in $(\mu_o,\tilde\mu_{o,o}^2)$ satisfy
$\mu_o^1\leq\cdots\leq\mu_o^k$.
A generalized zigzag cover of type $(0,\lambda,\mu,\undl z)$ is
constructed as follows.
\begin{enumerate}[$(a)$]
    \item We begin with $k$ strings $S_1,\ldots,S_k$. Label the two leaves of $S_i$ with
    $\lambda_o^{k-i+1}$ and $\mu_o^i$ for $i=1,\ldots,k$.
    The string $S_i$ is oriented from the leaf labelled with $\lambda_o^{k-i+1}$ to the leaf labelled with $\mu_o^i$.
    \begin{figure}[ht]
    \centering
    \begin{tikzpicture}
    \draw[line width=0.3mm] (-6,2)--(-4,1)--(6,1);
    \draw[line width=0.3mm,gray] (-6,1.8)--(-5.8,1.6)--(-6,1.4);
    \draw[line width=0.3mm,gray] (-5.8,1.6)--(-5.2,1.6);
    \draw[line width=0.3mm,gray] (-6,1.3)--(-4.7,1.1)--(-6,0.9);
    \draw[line width=0.3mm,gray] (-4.7,1.1)--(-4.2,1.1);
    \draw[line width=0.3mm,gray] (-4,1)--(-3.5,0.5);
    \draw[line width=0.3mm] (-6,0)--(-3.5,0.5)--(0.1,0.5);
    \draw[line width=0.3mm] (0.5,0.5) +(0:0.4 and 0.2) arc (0:180:0.4 and 0.2);
    \draw[line width=0.3mm,gray] (0.5,0.5) +(0:0.4 and 0.2) arc (0:-180:0.4 and 0.2);
    \draw[line width=0.3mm] (0.9,0.5)--(1.1,0.5);
    \draw[line width=0.3mm] (1.5,0.5) +(0:0.4 and 0.2) arc (0:180:0.4 and 0.2);
    \draw[line width=0.3mm,gray] (1.5,0.5) +(0:0.4 and 0.2) arc (0:-180:0.4 and 0.2);
    \draw[line width=0.3mm] (1.9,0.5)--(6,0.5);
    \draw[line width=0.3mm,gray] (-1.5,0.5)--(-1,0);
    \draw[line width=0.3mm] (-6,-0.5)--(-1,0)--(4,0)--(6,-1);
    \draw[line width=0.3mm,gray] (4,0)--(4.5,0);
    \draw[line width=0.3mm,gray] (6,0.2)--(4.5,0)--(6,-0.2);
    \draw[line width=0.3mm,gray] (5.2,-0.6)--(5.6,-0.6);
    \draw[line width=0.3mm,gray] (6,-0.8)--(5.6,-0.6)--(6,-0.4);
    \draw (-6,-1.5)--(6,-1.5);
    \foreach \Point in {(-5.8,-1.5),(-5.2,-1.5),(-4.7,-1.5),(-4.2,-1.5),(-4,-1.5),(-3.5,-1.5),(-1.5,-1.5),(-1,-1.5),(0.1,-1.5),(0.9,-1.5),(1.1,-1.5),(1.9,-1.5),(4,-1.5),(4.5,-1.5),(5.2,-1.5),(5.6,-1.5)}
    \draw[fill=black] \Point circle (0.03);
    \end{tikzpicture}
    \caption{An example of generalized zigzag cover: even contractible edges and tails with symmetric forks are drawn in gray, strings and odd contractible edges are in black.}
    \label{fig:example-UEC1}
    \end{figure}
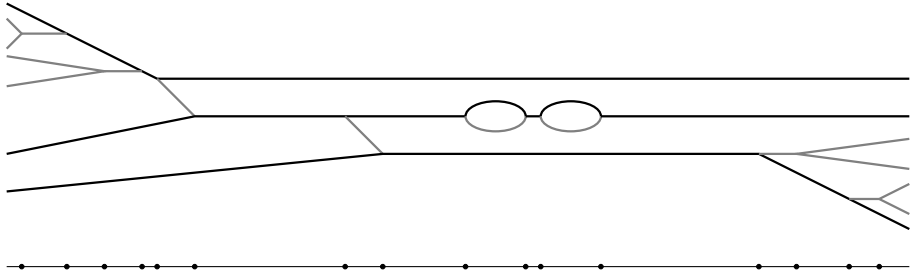
    \item We attach $l(\lambda_{o,o})+l(\lambda_{e,e})$ tails, each with
    symmetric forks (\textit{i.e.} the first and third type in Figure \ref{fig:gzc-tail}),
to the string $S_1$ (See Figure \ref{fig:example-UEC1} for an example).
    Let the resulting graph be denoted as $C_1$.
    The symmetric forks are labelled by the
    entries of $\lambda_{o,o}$ and $\lambda_{e,e}$, with each entry being used exactly once.
    These tails are oriented from their leaves to the
    intersections of the tails with the string $S_1$.
    Then $C_1$ is an oriented graph, and the inner edges adjacent to the symmetric forks are considered as contractible edges.
    The end in $C_1\setminus{\cup_{i=2}^k}S_i$ that is adjacent to a leaf associated with $\lambda_o$ (resp. $\lambda_{o,o}$ or $\lambda_{e,e}$)
    is assigned a weight by $\lambda_o$ (resp. $\lambda_{o,o}$ or $\lambda_{e,e}$).
    For any $3$-valent vertex in $C_1\setminus{\cup_{i=2}^k}S_i$, by applying the balancing condition
    at this vertex, the weight of its outgoing edge
    can be determined by the weights of its incoming edges. Hence, every edge in $C_1\setminus{\cup_{i=2}^k}S_i$ is weighted.
    \item We connect the string $S_i$ to the string $S_{i+1}$ via
    a contractible edge $E_i$, where $1\leq i\leq k-1$.
    Assume the resulting graph is denoted as $C_2$.
    Suppose that $E_i$ intersects with $S_i$ at $v_i$ and
    intersects with $S_{i+1}$ at $v_{i+1}'$. Note that all strings $S_1,\ldots,S_k$ are oriented.
    The contractible edge $E_i$ is chosen such that $v_i$
    lies behind all other inner vertices in $S_i$ and $v_{i+1}'$ precedes all other inner vertices in $S_{i+1}$ according to the orientation.
    The end in $S_1$ adjacent to the leaf associated with $\mu_o^1$ is an outgoing edge of $v_1$,
and let $\mu_o^1$ be the weight of this end.
Suppose that $2|\lambda_{o,o}|+2|\lambda_{e,e}|+\lambda_o^{k}\leq\mu_o^1$.
If $\lambda_o^{k-1}\geq\mu_o^{2}$, from our assumption in the first paragraph of this proof
we have $\lambda_o^k>\lambda_o^{k-1}\geq\mu_o^{2}\geq\mu_o^1$,
which contradicts the inequality $2|\lambda_{o,o}|+2|\lambda_{e,e}|+\lambda_o^{k}\leq\mu_o^1$.
Therefore, we must have $\lambda_o^1<\cdots<\lambda_o^{k-1}<\mu_o^{2}\leq\cdots\leq\mu_o^k$. However, this leads to a contradiction:
    $
    d=2|\lambda_{o,o}|+2|\lambda_{e,e}|+\sum_{j=1}^k\lambda_o^j<
    \sum_{j=1}^{k}\mu_o^j\leq d.
    $
    We conclude that $2|\lambda_{o,o}|+2|\lambda_{e,e}|+\lambda_o^{k}>\mu_o^1$.
The orientation of $E_1$ is chosen such that it is an outgoing edge of $v_1$ (\textit{i.e.}, $E_1$ is oriented from $v_1$ towards $v_2'$),
and $E_1$ is weighted by $2|\lambda_{o,o}|+2|\lambda_{e,e}|+\lambda_o^{k}-\mu_o^1$.
The end in $S_i$, $i=2,\ldots,k$ (resp. $i=2,\ldots,k-1$), adjacent to the leaf associated with $\lambda_o^{k-i+1}$ (resp. $\mu_o^{i}$)
is weighted by $\lambda_o^{k-i+1}$ (resp. $\mu_o^{i}$).
Suppose that all edges in $C_2\setminus(\cup_{j=i+1}^kS_j\cup(\cup_{l=i}^{k-1}E_l))$ are already weighted and oriented,
where $i\in\{1,\ldots,k-1\}$.
We show that the contractible edge
    $E_i$ can be oriented such that it is an outgoing edge of $v_i$ in the following.
    We use the notation $\lambda_o^{k+1}=\mu_o^0=0$.
    Suppose that the following inequality holds:
    \begin{equation}\label{eq:lem-non-vanish}
    (2|\lambda_{o,o}|+2|\lambda_{e,e}|+\sum_{j=k-i+2}^{k+1}\lambda_o^j-\sum_{j=0}^{i-1}\mu_o^j)+\lambda_o^{k-i+1}\leq
    \mu_o^i.
    \end{equation}
    If $\lambda_o^{k-i}\geq\mu_o^{i+1}$, from our assumption in the first paragraph of this proof,
    we have $\lambda_o^j>\mu_o^h$ for any $j\in\{k-i+1,k-i+2,\ldots,k\}$ and any $h\in\{1,2,\ldots,i\}$.
    This would imply the relation $\sum_{j=k-i+1}^{k+1}\lambda_o^j>\sum_{j=0}^{i}\mu_o^j$, which contradicts  equation (\ref{eq:lem-non-vanish}).
    Therefore, we must have $\lambda_o^1<\cdots<\lambda_o^{k-i}<\mu_o^{i+1}\leq\cdots\leq\mu_o^k$.
    This leads to a contradiction:
    $$
    d=2|\lambda_{o,o}|+2|\lambda_{e,e}|+\sum_{j=1}^k\lambda_o^j<
    \sum_{j=1}^{k}\mu_o^j\leq d.
    $$
    We conclude that:
    $$
    2|\lambda_{o,o}|+2|\lambda_{e,e}|+\sum_{j=k-i+1}^{k+1}\lambda_o^j>\sum_{j=0}^{i}\mu_o^j.
    $$
    The balancing condition at $v_i$ implies that the contractible edge
    $E_i$ is an outgoing edge of $v_i$, and the weight of $E_i$ is given by
$2|\lambda_{o,o}|+2|\lambda_{e,e}|+\sum_{j=k-i+1}^{k+1}\lambda_o^j-\sum_{j=0}^{i}\mu_o^j$.
The weight of the outgoing edge of $v_{i+1}'$ is determined by the balancing conditon at $v_{i+1}'$.
Then all edges in $C_2\setminus(\cup_{j=i+2}^kS_j\cup(\cup_{l=i+1}^{k-1}E_l))$ are weighted and oriented.
By repeating this process, we ensure that all edges in $C_2$ are weighted and oriented.
    \item We attach $l(\bar\mu_{o,o})+l(\mu_{e,e})$ tails with
    symmetric forks to the string $S_k$.
Note that the vertex $v_{k}'$ precedes all intersection points of these tails with $S_{k}$.
    Let the resulting graph be denoted as $C$.
The inner edges adjacent to the $l(\bar\mu_{o,o})+l(\mu_{e,e})$ symmetric forks are contractible edges.
    The symmetric forks are labelled by the 
    entries of $\bar\mu_{o,o}$ and $\mu_{e,e}$, with each entry used exactly once.
    These tails are oriented from their
    intersection points with $S_k$ towards their leaves.
Consequently, $C$ becomes an oriented graph.
    The end in $C$ adjacent to a leaf associated with $\bar\mu_{o,o}$ or $\mu_{e,e}$ (resp. $\mu_o^k$)
    is weighted by $\bar\mu_{o,o}$ or $\mu_{e,e}$ (resp. $\mu_o^k$). 
    For any $3$-valent vertex in $C\setminus(\cup_{i=1}^{k-1}(S_i\cup E_i))$, 
by applying the balancing condition at this vertex, the weight function can be extended to all $C$.
Hence, $C$ is a weighted and oriented graph of genus zero.
The orientation of the graph $C$ induces a partial order $\pl$ on its inner vertices.
It follows from \cite[Remark 5.3]{rau2019} that any total order on inner vertices of $C$ that extends $\pl$ can induce a unique tropical cover $\varphi:C\to T\pb^1$ of type $(0,\lambda,\mu,\undl z)$.
To ensure that $\varphi:C\to T\pb^1$ is a generalized zigzag cover,
it suffices to choose a compatible total order on the inner vertices of $C$ such that
the two endpoints of every contractible edge are adjacent to each other according to this total order.
The existence of such a total order is obvious.
Thus, we obtain a generalized zigzag cover $\varphi:C\to T\pb^1$ of type $(0,\lambda,\mu,\undl z)$.
\end{enumerate}
To get a generalized zigzag cover of any genus $g$,
one can appropriately put $g$ contractible circles to an odd end (see Figure \ref{fig:example-UEC1}) for an example).

Case $a=0$: In this case, the two partitions $\lambda_o$ and $\mu_o$ have the same length.
Suppose $k=l(\lambda_o)=l(\mu_o)$.
Assume further that the entries $\lambda_o^1,\ldots,\lambda_o^k$
in $\lambda_o$ satisfy $\lambda_o^1<\cdots<\lambda_o^k$,
and similarly, the entries $\mu_o^1,\ldots,\mu_o^k$ in $\mu_o$ satisfy
$\mu_o^1<\cdots<\mu_o^k$.
The construction outlined for the case $a>0$ can be adapted to the case $a=0$ with straightforward modifications, so we omit the details here.

Case $a<0$: From the assumption $|l(\lambda_o)-l(\mu_o)|<2\min(l(\lambda_{o,o}),l(\mu_{o,o}))$,
it is possible to select $|2a|$ entries from $(\lambda_{o,o}^2)$ and add them to the partition $\lambda_o$.
Suppose $(\lambda_{o,o}^2)=(\bar\lambda_{o,o}^2,\tilde\lambda_{o,o}^2)$, and $l(\tilde\lambda_{o,o})=|a|$.
The two partitions $(\lambda_o,\tilde\lambda_{o,o}^2)$ and $(\mu_o)$ have the same length,
so we suppose $k=l(\lambda_o,\tilde\lambda_{o,o}^2)=l(\mu_o)$.
Assume further that the entries $\lambda_o^1,\ldots,\lambda_o^k$
in $(\lambda_o,\tilde\lambda_{o,o}^2)$ satisfy $\lambda_o^1\leq\cdots\leq\lambda_o^k$,
and similarly, the entries $\mu_o^1,\ldots,\mu_o^k$ in $\mu_o$ satisfy
$\mu_o^1<\cdots<\mu_o^k$.
The construction detailed for the case $a>0$ can be adapted to the case $a<0$ with straightforward modifications, so we omit the specific details here.
\end{proof}

\subsection{Asymptotic growth rate}
In this section, we consider the asymptotic growth of
generalized zigzag numbers when the degree goes to infinity and the genus is fixed.

\begin{lemma}
\label{lem:asymp1}
For any integer $m>3$, there are at least $\lfloor\frac{m-1}{3}\rfloor!$
generalized zigzag covers of type $(0,(1^m),(1^m),\undl z)$,
where $\undl z$ is an $(m-1,0)$-distribution.
\end{lemma}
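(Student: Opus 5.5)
We must construct at least $\lfloor\frac{m-1}{3}\rfloor!$ pairwise non-isomorphic generalized zigzag covers of type $(0,(1^m),(1^m),\undl z)$, where $\undl z$ is an $(m-1,0)$-distribution. In such a cover all ends have weight $1$. Genus $0$ and Riemann--Hurwitz give $2s=2m-2$, so $s=m-1$ and there are exactly $2(m-1)$ inner vertices, grouped into $m-1$ consecutive pairs, each joined by a contractible edge. Since $\lambda=\mu=(1^m)$, every tail in Figure \ref{fig:gzc-tail} would need a symmetric fork of weight $1$ or a symmetric cycle of weight $1$.

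\textbf{Building block.} I would use the configuration of Figure \ref{fig:pair-EC}(ii) with $k=1$, $o=1$: an inward symmetric fork of two weight-$1$ ends meets at a vertex, which emits the even contractible edge of weight $2$. That edge joins a second vertex, which also receives an odd edge of weight $w$ and emits an edge of weight $w+2$. Weight $1$ is the only possibility here, since the fork (or, mirrored, the outward fork) consists of leaves of weight $1$, so the contractible edge has weight $2$.

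\textbf{Spine.} I would build a ``spine'' string of odd weights. Inward forks raise the weight by $2$ at each step (type (ii)), and outward forks lower it (type (iv)). In addition, pairs of types (v)--(xii) with odd contractible edges can merge or split single weight-$1$ ends. This gives a genus-$0$ graph $C$ in which $S$ is the spine together with all contractible edges. The only even edges are contractible, so Definition \ref{def:gen-zig-cov} holds with the forks in $\sym_3$, as in the proof of Theorem \ref{thm:non-vanish-UEC}.

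\textbf{Counting.} Take $n=\lfloor\frac{m-1}{3}\rfloor$. I would group the pairs into $n$ ``triples of pairs'', each consuming $3$ of the $m-1$ pairs. Each triple is a module on the spine that raises the weight and then lowers it again, for example an inward fork, an odd merge and an outward fork. Modules must be arranged so that they can be placed in any order along the spine, while leftover pairs (at most $2$, plus adjustments to end counts) are placed at fixed positions at the start and end. A cleaner alternative is to let the modules differ: module $j$ carries the spine weight up to $2j+1$ and back. Then the $n!$ orderings of the modules along the spine give non-isomorphic weighted oriented graphs, because the sequence of spine weights read along the orientation differs. For each ordering, Remark \ref{rmk-combin1} supplies a total order, placing the two endpoints of each contractible edge consecutively, and hence a tropical cover of type $(0,(1^m),(1^m),\undl z)$.

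\textbf{Main obstacle.} The hard part is making the modules genuinely distinct, with weight-$1$ ends only, while keeping the total counts exact: $m$ inward ends, $m$ outward ends, and exactly $m-1$ pairs. I would first try distinguishing the orderings without varying the weights. Instead, I would record the order in which the modules appear relative to the increasing labels of the points of $\undl z$: the total order on vertices is part of the cover, and covers with the same underlying graph but different vertex-to-position assignments are non-isomorphic as maps. Automorphisms swapping identical forks do not identify distinct module orderings, since the spine is a directed path and the modules sit at distinct positions on it. This would give $n!$ distinct covers directly. I would check that each module uses exactly three pairs and balances end counts ($2$ in, $2$ out, plus one through-end), then absorb the remainder $m-1-3n\le 2$ with fixed initial and terminal pairs.
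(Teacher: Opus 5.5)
\textbf{The counting step does not work.} Your setup is correct: $s=m-1$, every inner vertex lies in a pair whose two vertices occupy consecutive positions $x'_{2i-1}<x'_{2i}$, only weight-$1$ forks are available, and you use $n=\lfloor\frac{m-1}{3}\rfloor$ modules of three pairs with the remainder absorbed at the ends. None of your three mechanisms for producing $n!$ covers succeeds.

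\begin{itemize}
\item \emph{Modules on one directed spine.} If every pair has a vertex on the spine (as in your inward fork / merge / outward fork module), the orientation totally orders the pairs. Since the pairs occupy consecutive slots, exactly one total order extends $\pl$, so each such graph gives one cover. Your own remark that ``the spine is a directed path and the modules sit at distinct positions on it'' is exactly why their order relative to $\undl z$ cannot be varied. Permuting identical modules along the spine does not change the graph either.
\item \emph{Making the modules differ.} In your scheme module $j$ needs $j$ pairs of type (ii) and $j$ of type (iv). That is $n(n+1)$ pairs in total, which yields only about $\lfloor\sqrt m\rfloor!$ covers. More generally, modules of bounded size give only exponentially many spine graphs, while $n!$ grows faster.
\item \emph{A side inconsistency.} By parity, a pair with odd contractible edge (types (vi), (vii), (ix), (xi)) has two even non-contractible edges. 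So ``the only even edges are contractible'' is incompatible with using those types.
\end{itemize}

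\textbf{Missing idea.} The factorial must come from freedom in the linear extensions, so the modules must be partly incomparable in the poset. The paper does this as follows.
\begin{itemize}
\item Each module $C_i$ is a cover of type $(0,(1^4),(1^4))$ with two strings. It consists of pairs of types (ii), (xii), (iv) of Figure~\ref{fig:pair-EC}, with edge weights $1,2,3$.
\item Each module leaves four free weight-$1$ ends: inward $e_i,\bar e_i$ and outward $e_i',\bar e_i'$.
\item Consecutive modules are glued in one of two ways. Gluing $\bar e_i'$ to $e_{i+1}$ forces all of $C_i$ before $C_{i+1}$. Gluing $e'_{i+1}$ to $\bar e_i$ only forces the middle pair of $C_{i+1}$ before the middle pair of $C_i$.
\item For a permutation $\sigma$, one uses the first gluing iff $i+1$ appears after $i$ in $\sigma$. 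Then the block order $O(C_{\sigma(1)}),\ldots,O(C_{\sigma(n)})$ extends the partial order.
\item Distinct $\sigma$ give distinct (graph, total order) data. Arguing as in the last paragraph of Lemma~\ref{lem:glue1}, this gives $n!$ non-isomorphic covers for $m=3n+1$.
\end{itemize}
For $m=3n+2$ and $m=3n+3$, one module is replaced by a $(1^5)$ or $(1^6)$ version, which matches your plan for the remainder.
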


\begin{proof}
We first consider the case when $m=3n+1$.
We show that there are at least $n!$ generalized zigzag
covers of type $(0,(1^{3n+1}),(1^{3n+1}),\undl z)$.
Let $\varphi_i:C_i\to T\pb^1$ be a generalized zigzag cover
of type $(0,\lambda_i,\mu_i,\undl z_i)$ depicted
in Figure \ref{fig:component-UEC1}, where $i=1,\ldots,n$, and $\lambda_i=\mu_i=(1^4)$.
\begin{figure}[ht]
    \centering
    \begin{tikzpicture}
    \draw[line width=0.3mm] (-6,2)--(-4,1)--(6,1);
    \draw[line width=0.3mm,gray] (-6,1.7)--(-5.5,1.5)--(-6,1.3);
    \draw[line width=0.3mm,gray] (-5.5,1.5)--(-5,1.5);
    \draw[line width=0.3mm,gray] (-4,1)--(-3.5,0.5);
    \draw[line width=0.3mm] (-6,0)--(-3.5,0.5)--(4,0.5)--(6,-0.5);
    \draw[line width=0.3mm,gray] (4,0.5)--(5,0.5);
    \draw[line width=0.3mm,gray] (6,0.7)--(5,0.5)--(6,0.3);
    \draw (-6,-1)--(6,-1);
    \foreach \Point in {(-5.5,-1),(-5,-1),(-4,-1),(-3.5,-1),(4,-1),(5,-1)}
    \draw[fill=black] \Point circle (0.03);
    \draw (-5.2,1.8) node {\tiny $e_i$}  (-5.2,-0.1) node {\tiny $\bar e_i$} (5,1.2) node {\tiny $e_i'$} (5,-0.3) node {\tiny $\bar e_i'$};
    \end{tikzpicture}
    \caption{generalized zigzag cover $\varphi_i:C_i\to T\pb^1$ of type $(0,1^4,1^4,\undl x_i)$.}
    \label{fig:component-UEC1}
\end{figure}
Let $e_i,e_i', \bar e_i,\bar e_i'$ be the four ends of $C_i$ depicted in Figure \ref{fig:component-UEC1}.
From Remark \ref{rmk-combin1}, the map $\varphi_i$ induces an orientation $\ol_i$ on $C_i$.
Let $\pl_i$ be the partial order on inner vertices of $C_i$ determined by $\ol_i$.
The map $\varphi_i$ and $\undl z_i$ also determine a total order of inner vertices of $C_i$.
Let $O(C_i)$ denote this total order of inner vertices of $C_i$. 
Note that the total order is an extension of $\pl_i$.
Every edge of $C_i$ is weighted by the map $\varphi_i$, and
the weight of an edge can be obtained by using the balancing conditions at every vertices of $C_i$.

Let $C$ be a graph obtained by gluing $C_1,\ldots, C_n$ as follows.
For each $i\in\{1,\ldots,n-1\}$, we glue $C_i$ with $C_{i+1}$ via either surgery $(1)$ or surgery $(2)$ as described below:
\begin{itemize}
\item[(1)]  glue the end $e_{i+1}$ of $C_{i+1}$ with
the end $\bar e_i'$ of $C_i$;
\item[(2)] glue the end $e_{i+1}'$ of $C_{i+1}$ with
the end $\bar e_i$ of $C_i$.
\end{itemize}
The gluing procedure is compatible with orientations and weights on $C_1,\ldots,C_n$. 
In other words, if an edge $e$ in $C$ is  is an edge in $C_i$, then $e$ is oriented and weighted according to the 
orientation and weight function on $C_i$.
Hence, the graph $C$ is oriented and weighted. The orientation $\ol$ on $C$
induces a partial order $\pl$ on the inner vertices of $C$.
By remark \cite[Remark 5.3.]{rau2019}, any choice of a total order on inner vertices of $C$ that extends 
$\pl$ gives rise to a tropical cover $\varphi:C\to T\pb^1$ of type $(0,(1^{3n+1}),(1^{3n+1}),\undl z)$.

Let $\sigma$ be a permutation of the set $\{1,\ldots,n\}$.
For any $i\in\{1,\ldots,n\}$, if $i+1$ follows $i$ in the sequence
$\sigma(1),\ldots,\sigma(n)$, we apply type $(1)$ surgery to $C_i$ and $C_{i+1}$.
Otherwise, we apply type $(2)$ surgery to $C_i$ and $C_{i+1}$.
Let $C_\sigma$ denote the graph obtained through this process.
The graph $C_\sigma$ is oriented and weighted as described in the preceding paragraph.
We denote the orientation of $C_\sigma$ by $\ol_\sigma$,
and the partial order on its inner vertices, determined by $\ol_\sigma$, by $\pl_\sigma$.
The total order of the inner vertices of $C_\sigma$, given by $(O(C_{\sigma(1)}),\ldots,O(C_{\sigma(n)}))$,
is an extension of $\pl_\sigma$.
Let $\varphi_\sigma:C_\sigma\to T\pb^1$ be the unique tropical cover of type $(0,(1^{3n+1}),(1^{3n+1}),\undl z)$ that
corresponds to the total order $(O(C_{\sigma(1)}),\ldots,O(C_{\sigma(n)}))$
(see Figure \ref{fig:example-UEC2} for an example).
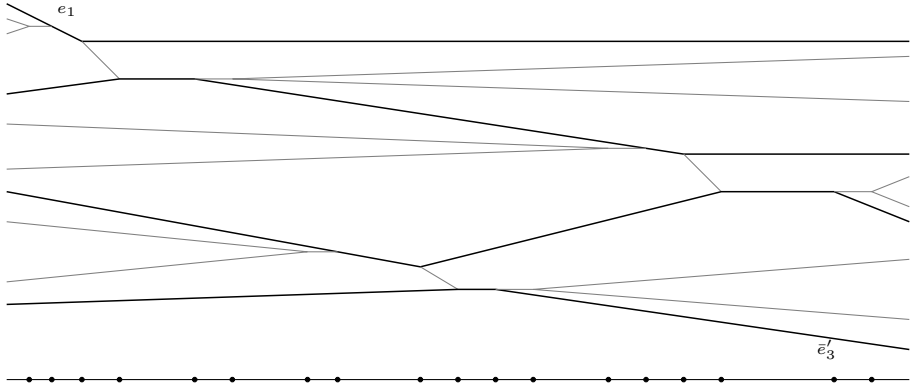
\begin{figure}[ht]
    \centering
    \begin{tikzpicture}
    \draw[line width=0.2mm] (-6,2)--(-5,1.5)--(6,1.5);
    \draw[gray] (-6,1.8)--(-5.7,1.7)--(-6,1.6);
    \draw[gray] (-5.7,1.7)--(-5.4,1.7);
    \draw[gray] (-5,1.5)--(-4.5,1);
    \draw[line width=0.2mm] (-6,0.8)--(-4.5,1)--(-3.5,1)--(3,0)--(6,0);
    \draw[gray] (-3.5,1)--(-3,1);
    \draw[gray] (6,1.3)--(-3,1)--(6,0.7);
    \draw[gray] (2,0.077)--(2.5,0.077);
    \draw[gray] (-6,0.4)--(2,0.077)--(-6,-0.2);
    \draw[gray] (3,0)--(3.5,-0.5);
    \draw[line width=0.2mm] (-6,-0.5)--(-0.5,-1.5)--(3.5,-0.5)--(5,-0.5)--(6,-0.9);
    \draw[gray] (6,-0.3)--(5.5,-0.5)--(6,-0.7);
    \draw[gray] (5,-0.5)--(5.5,-0.5);
    \draw[gray] (-6,-0.9)--(-2,-1.3)--(-6,-1.7);
    \draw[gray] (-2,-1.3)--(-1.6,-1.3);
    \draw[gray] (-0.5,-1.5)--(0,-1.8);
    \draw[line width=0.2mm] (-6,-2)--(0,-1.8)--(0.5,-1.8)--(6,-2.6);
    \draw[gray] (6,-1.4)--(1,-1.8)--(6,-2.2);
    \draw[gray] (1,-1.8)--(0.5,-1.8);
    \draw (-6,-3)--(6,-3);
    \foreach \Point in {(-5.7,-3),(-5.4,-3),(-5,-3),(-4.5,-3),(-3.5,-3),(-3,-3),(-2,-3),(-1.6,-3),(-0.5,-3),(0,-3),(0.5,-3),(1,-3),(2,-3),(2.5,-3),(3,-3),(3.5,-3),(5,-3),(5.5,-3)}
    \draw[fill=black] \Point circle (0.03);
    \draw (-5.2,1.9) node {\tiny $e_1$} (4.9,-2.6) node {\tiny $\bar e_3'$};
    \end{tikzpicture}
    \caption{An example: the permutation is $O(C_1),O(C_3),O(C_2)$.}
    \label{fig:example-UEC2}
\end{figure}
Since $\varphi_1,\ldots,\varphi_n$ are generalized zigzag covers,
the cover $\varphi_\sigma$ satisfies the conditions in Definition \ref{def:gen-zig-cov}.
Thus,  for each permutation $\sigma$ of the set $\{1,\ldots,n\}$, we obtain a generalized zigzag cover $\varphi_\sigma$.
There are $n!$ permutations of the set $\{1,\ldots,n\}$,
so we have $n!$ generalized zigzag covers.
An argument similar to the one presented in the last paragraph of the proof of Lemma \ref{lem:glue1}
shows that if $\sigma_1,\sigma_2$ are two different permutations, then $\varphi_{\sigma_1}$ is not isomorphic to $\varphi_{\sigma_2}$.
We omit the details of this argument here for brevity.

The same argument as in the previous case provides the proof for the lower bounds of the 
numbers of generalized zigzag covers of type
$(0,(1^{3n+2}),(1^{3n+2}),\undl z)$ and $(0,(1^{3n+3}),(1^{3n+3}),\undl z)$ for any $n>0$.
This holds when $\varphi_1:C_1\to T\pb^1$ in the previous case is replaced by the
generalized zigzag covers $\varphi_1':C_1'\to T\pb^1$ of type $(0,(1^5),(1^5),\undl z_1')$
and $\varphi_1'':C_1''\to T\pb^1$ of type $(0,1^6,1^6,\undl z_1'')$, respectively, 
as depicted in Figure \ref{fig:component-UEC2}.
\begin{figure}[ht]
    \centering
    \begin{tikzpicture}
    \draw[line width=0.2mm] (-6,2)--(-5,1.5)--(-2,1.5);
    \draw[gray] (-6,1.8)--(-5.7,1.7)--(-6,1.6);
    \draw[gray] (-5.7,1.7)--(-5.4,1.7);
    \draw[gray] (-5,1.5)--(-4.5,1);
    \draw[line width=0.2mm] (-6,0.8)--(-4.5,1)--(-2,1);
    \draw[gray] (-4,1)--(-3.5,0.5);
    \draw[line width=0.2mm] (-6,0.2)--(-3.5,0.5)--(-3,0.5)--(-2,0);
    \draw[gray] (-2,0.7)--(-2.5,0.5)--(-2,0.3);
    \draw[gray] (-3,0.5)--(-2.5,0.5);
    \draw (-6,-1)--(-2,-1);
    \foreach \Point in {(-5.7,-1),(-5.4,-1),(-5,-1),(-4.5,-1),(-4,-1),(-3.5,-1),(-3,-1),(-2.5,-1)}
    \draw[fill=black] \Point circle (0.03);
    \draw (-5.3,1.9) node {\tiny $e_1$} (-5.1,0) node {\tiny $\bar e_1$} (-2.2,1.6) node {\tiny $e_1'$} (-2.5,0) node {\tiny $\bar e_1'$} (-4,-1.5) node{\tiny $(1)$ $\varphi_1':C_1'\to T\pb^1$};
    \draw[line width=0.2mm] (2,2)--(3,1.5)--(6,1.5);
    \draw[gray] (2,1.8)--(2.3,1.7)--(2,1.6);
    \draw[gray] (2.3,1.7)--(2.6,1.7);
    \draw[gray] (3,1.5)--(3.5,1);
    \draw[line width=0.2mm] (2,0.8)--(3.5,1)--(6,1);
    \draw[gray] (4,1)--(4.5,0.5);
    \draw[line width=0.2mm] (2,0.2)--(4.5,0.5)--(6,0.5);
    \draw[gray] (4.7,0.5)--(5,0.1);
    \draw[line width=0.2mm] (2,-0.1)--(5,0.1)--(5.4,0.1)--(6,-0.4);
    \draw[gray] (6,0.3)--(5.7,0.1)--(6,-0.1);
    \draw[gray] (5.7,0.1)--(5.4,0.1);
    \draw (2,-1)--(6,-1);
    \foreach \Point in {(2.3,-1),(2.6,-1),(3,-1),(3.5,-1),(4,-1),(4.5,-1),(4.7,-1),(5,-1),(5.4,-1),(5.7,-1)}
    \draw[fill=black] \Point circle (0.03);
    \draw (2.6,1.9) node {\tiny $e_1$} (2.6,-0.3) node {\tiny $\bar e_1$} (5.6,1.6) node {\tiny $e_1'$} (5.5,-0.3) node {\tiny $\bar e_1'$} (4,-1.5) node{\tiny $(2)$ $\varphi_1'':C_1''\to T\pb^1$};
    \end{tikzpicture}
    \caption{Generalized zigzag covers $\varphi_1'$ and $\varphi_1''$.}
    \label{fig:component-UEC2}
\end{figure}
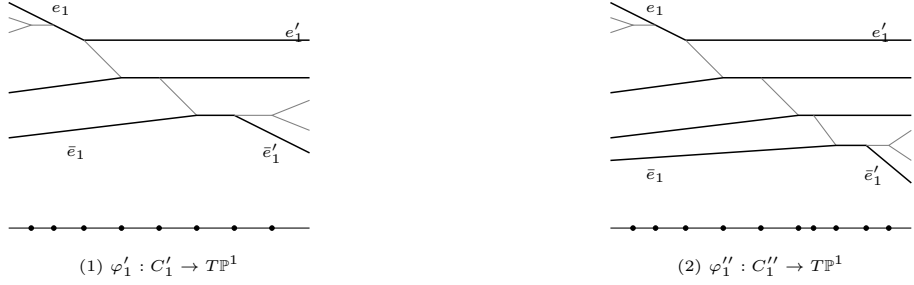
\end{proof}

\begin{theorem}
\label{thm:asymp1}
Given an integer $g\geq0$ and two partitions $\lambda,\mu$ such that $|\lambda|=|\mu|$ and $l(\lambda_e)=l(\mu_e)=0$,
there exists an integer $n_0$ that depends on $\lambda,\mu$ such that for any $h>n_0+3$, we have
\[
|\zl_g((\lambda,1^{h}),(\mu,1^{h});\Lambda_{s,0})|\geq|\zl_g((1^{h-n_0}),(1^{h-n_0});\Lambda_{h-n_0-1,0})|,
\]
where $2s=l(\lambda)+l(\mu)+2g-2+2h$.
\end{theorem}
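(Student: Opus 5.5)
We will prove the inequality by building an injective map from $\zl_g((1^{h-n_0}),(1^{h-n_0});\Lambda_{h-n_0-1,0})$ into $\zl_g((\lambda,1^{h}),(\mu,1^{h});\Lambda_{s,0})$. The tool is the gluing formula of Lemma \ref{lem:glue1}, applied along an odd end of weight $o=1$.

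\textbf{Step 1: the fixed piece.} Enlarge $\lambda,\mu$ by a bounded number of ones so that a construction in the spirit of Theorem \ref{thm:non-vanish-UEC} applies. Since $l(\lambda_e)=l(\mu_e)=0$, every even entry occurs in pairs, and the even pairs $\lambda_{e,e}^2,\mu_{e,e}^2$ can be attached as symmetric-fork tails to a string exactly as in step $(b)$ of that proof.

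Concretely, choose a constant $n_0$ depending only on $\lambda,\mu$. Take $\lambda''=(\lambda,1^{n_0})$ and $\mu''=(\mu,1^{n_0})$, so that after adjoining one extra entry $1$ on the outgoing side, the pair $(\lambda'',(\mu'',1))$ satisfies the hypotheses of Theorem \ref{thm:non-vanish-UEC}. The enlargement must make:
\begin{itemize}
\item[] $\min(l(\lambda''_o),l(\mu''_o))>0$,
\item[] the parity condition hold,
\item[] $|l(\lambda''_o)-l(\mu''_o)|$ small compared with $2\min(l(\lambda''_{o,o}),l(\mu''_{o,o}))$.
\end{itemize}
Adding pairs of ones increases $l(\cdot_{o,o})$, which secures the third condition, and a single extra one adjusts parity. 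This yields a genus $g$ generalized zigzag cover $\varphi_1$ of type $(g,\lambda'',(\mu'',1),\cdot)$. The genus is inserted as contractible circles on an odd end, as at the end of that proof.

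In $\varphi_1$ we must keep an outward end $l_1$ of weight $1$ inside $S_1$ and not in a symmetric fork. We arrange this by making $1$ one of the $\mu''_o$ entries placed on a string rather than on a fork tail.

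\textbf{Step 2: the glued covers.} For every generalized zigzag cover $\varphi_2$ of type $(0,(1^{h-n_0}),(1^{h-n_0}),\cdot)$ we need an inward end $l_2$ of weight $1$ in $S_2$ not lying in a symmetric fork. Here we will have to argue that at least one such end exists in every such cover. If it does not, we replace $\varphi_2$ by an index bookkeeping that only keeps covers with such an end, using that the counted family from Lemma \ref{lem:asymp1} has one.

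Gluing with Lemma \ref{lem:glue1} then gives a cover of type
\[
(g,(\lambda'',1^{h-n_0-1}),(\mu'',1^{h-n_0-1}))=(g,(\lambda,1^{h-1}),(\mu,1^{h-1})).
\]
We adjust $n_0$ by one so the ones count exactly $h$. The injectivity statement in Lemma \ref{lem:glue1} shows that distinct $\varphi_2$ give non-isomorphic glued covers. The number of pairs $x_i$ adds up by the Riemann–Hurwitz count, so the distribution has type $(s,0)$ with $2s=l(\lambda)+l(\mu)+2g-2+2h$.

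\textbf{Main obstacle.} The delicate point is guaranteeing the weight-$1$ non-fork end in $S_2$ for every cover being counted. I would first try to show this directly. In type $((1^m),(1^m))$ with $m>3$, not all inward ends can lie in symmetric forks, because a fork of weight-one ends feeds a weight-$2$ contractible edge into a vertex which must then also have an odd incoming string edge.

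A second concern is the choice of which end to glue. Lemma \ref{lem:glue1} fixes specific ends, but different choices could create isomorphic results. To avoid this, I would fix a canonical choice, namely the inward weight-one non-fork end whose inner vertex is smallest in the total order, and verify that injectivity still holds.
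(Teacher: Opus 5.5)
Your skeleton is the paper's: a fixed genus-$g$ piece from Theorem~\ref{thm:non-vanish-UEC}, glued by Lemma~\ref{lem:glue1} to an arbitrary genus-$0$ cover of type $((1^m),(1^m))$. However, your shortcut of gluing the fixed piece directly along a weight-$1$ end fails in Step 1.

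There is first a bookkeeping slip. With $\mu''=(\mu,1^{n_0})$ the type $(g,\lambda'',(\mu'',1))$ has unequal degrees, and the glued type is $((\lambda'',1^{h-n_0-1}),(\mu'',1^{h-n_0}))$. You want $\mu''=(\mu,1^{n_0-1})$, i.e.\ a fixed piece of type $(g,(\lambda,1^{n_0}),(\mu,1^{n_0}))$.

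The real problem is that you cannot always put $1$ on an outgoing string end while keeping the hypotheses of Theorem~\ref{thm:non-vanish-UEC}. Take $\lambda=(3,3)$, $\mu=(5,1)$ and the pair $((3,3,1^N),(5,1^{N+1}))$. For $N$ even, $\lambda''_o=\emptyset$, so $\min(l(\lambda''_o),l(\mu''_o))>0$ fails. For $N$ odd, $\lambda''_o=(1)$ and $\mu''_o=(5)$, so $a=0$. The construction then has a single string with ends $1$ (inward) and $5$ (outward), and every other $1$ sits in a fork tail.

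The paper never needs a weight-$1$ end on the fixed piece. It glues along the string end of weight $\mu_o^{\max}$ into a converter cover of type $(0,(\mu_o^{\max}),(1^{\mu_o^{\max}}))$, then the $((1^m),(1^m))$ cover, then the reverse converter of type $(0,(1^{\mu_o^{\max}}),(\mu_o^{\max}))$. This gives $n_0=2a+\mu_o^{\max}-2$. The cost is that the middle piece must supply both an inward and an outward non-fork weight-$1$ end.

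You could also repair your version in either of two ways:
\begin{itemize}
\item Allow gluing on the incoming side. A parity argument shows that for one of the two parities of $N$ the hypotheses hold and $1$ lies in $\lambda''_o$ or $\mu''_o$.
\item Check that the string construction still works when $\lambda''_o=\emptyset$.
\end{itemize}
As written, though, Step 1 does not go through.

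On Step 2, your fallback of keeping only the family from Lemma~\ref{lem:asymp1} would not prove the stated inequality. Its right-hand side is the full count $|\zl_0((1^{h-n_0}),(1^{h-n_0});\Lambda_{h-n_0-1,0})|$, which is genus $0$ since $s=h-n_0-1$. Note that the paper's proof also only reads the needed ends off the construction of Lemma~\ref{lem:asymp1}. That suffices for Corollary~\ref{cor:asymp1}, but not literally for the statement.

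Your suggested direct argument is not enough as phrased, because the partner of an inward fork vertex can receive an inner edge rather than an end. Use the first pair $x_1$ instead.
\begin{itemize}
\item The vertex over $x'_1$ has only ends as incoming edges. If it has one, that end works.
\item If it has two, they form an inward fork. By Figure~\ref{fig:pair-EC} the pair is then of type (ii).
\item The second incoming edge of the partner over $x'_2$ can only be an end, since the only earlier vertex has a single outgoing edge. This end has weight $1$ and lies in no fork.
\item It lies in $S$, because a lone odd end is not one of the tails of Figure~\ref{fig:gzc-tail}.
\end{itemize}
The last pair gives an outward end symmetrically.

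With this, injectivity is as in Lemma~\ref{lem:glue1}. Cut the glued cover at the unique edge between the vertices over $\undl x_1\sqcup\undl y_1$ and those over $\undl x_2'\sqcup\undl y_2'$; this recovers both pieces. Your canonical choice of end is therefore only needed to make the map well defined.
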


\begin{proof}
Since $l(\lambda_e)=l(\mu_e)=0$, we conclude that $l(\lambda)\equiv l(\mu)\mod 2$.
Suppose that $|l(\lambda_o)-l(\mu_o)|=2b$ and $\min(l(\lambda_o),l(\mu_o))>0$.
Then the partitions $(\lambda,1^{2a}), (\mu,1^{2a})$,
where $a>b$, satisfy the conditions of Theorem \ref{thm:non-vanish-UEC}.
If, on the other hand, $|l(\lambda_o)-l(\mu_o)|=2b$ and $\min(l(\lambda_o),l(\mu_o))=0$,
then the partitions $(\lambda,1^{2a+1}), (\mu,1^{2a+1})$,
where $a>b$, satisfy the conditions of Theorem \ref{thm:non-vanish-UEC}.
Since the proofs for these two cases are identical, we will only consider the case where $\min(l(\lambda_o),l(\mu_o))>0$.

First, we construct four types of generalized zigzag covers.
Then, we apply Lemma \ref{lem:glue1} to obtain the required type of generalized zigzag covers.
\begin{itemize}
\item Let $\varphi_1:C_1\to T\pb^1$ be a generalized zigzag cover
of type $(g,(\lambda,1^{2a}), (\mu,1^{2a}),\undl z_1)$ constructed by Theorem \ref{thm:non-vanish-UEC}.
Note that, according to the proof of Theorem \ref{thm:non-vanish-UEC}, 
there exists an outward end $l_1$ in the non-empty subgraph $S_1\subset C_1$
which is weighted by $\mu_o^{max}$ and is not in a symmetric fork. Here, $\mu_o^{max}$ denotes the largest entry in $\mu_o$.
\item Let $\varphi_2:C_2\to T\pb^1$ be a generalized zigzag cover
of type $(0,(\mu_o^{max}), (1^{\mu_o^{max}}),\undl z_2)$ constructed according to Figure \ref{fig:asymp1}(1).
In the non-empty subgraph $S_2\subset C_2$, there exists an inward end $l_2'$ weighted by $\mu_o^{max}$ 
and an outward end $l_2$ weighted by $1$. Furtherover,  neither $l_2'$ nor $l_2$ is located in a symmetric fork.
\item Choose an integer $h>2a+\mu_o^{max}+1$.
Let $\varphi_3:C_3\to T\pb^1$ be a generalized zigzag cover 
of type $(0,(1^m),(1^m),\undl z_3)$ obtained by Lemma \ref{lem:asymp1},
where $m=h-2a-\mu_o^{max}+2>3$.
According to the proof of Lemma \ref{lem:asymp1}, in the non-empty subgraph $S_3\subset C_3$,
there are at least two outward ends, $l_3$ and $\bar l_3$, and two inward ends, $l_3'$ and $\bar l_3'$,
each of which is weighted by 1 and is not located in a symmetric fork.
\item Let $\varphi_4:C_4\to T\pb^1$ be a generalized zigzag cover
of type $(0,(1^{\mu_o^{max}}),(\mu_o^{max}), \undl z_4)$ constructed according to Figure \ref{fig:asymp1}(2).
In the non-empty subgraph $S_4\subset C_4$, there exists an inward end $l_4'$ weighted by $1$ and not located in a symmetric fork,
as well as an outward end $l_4$ weighted by $\mu_o^{max}$ and not in a symmetric fork.
\end{itemize}
\begin{figure}[ht]
    \centering
    \begin{tikzpicture}
    \draw[line width=0.3mm] (-1,1)--(3,-1);
    \draw[line width=0.3mm] (-0.5,0.75)--(-0.2,0.75)--(3,1);
    \draw[line width=0.3mm] (-0.2,0.75)--(3,0.5);
    \draw[line width=0.3mm] (2,-0.5)--(2.5,-0.5)--(3,-0.25);
    \draw[line width=0.3mm] (2.5,-0.5)--(3,-0.75);
    \draw[line width=0.3mm,dotted] (2,0.5)--(2,-0.3);
    \draw[line width=0.2mm,gray] (3.2,0.75)--(3.3,0.75)--(3.3,-0.5)--(3.2,-0.5);
    \foreach \Point in {(-0.5,-1.5),(-0.2,-1.5),(2,-1.5),(2.5,-1.5)}
    \draw[fill=black] \Point circle (0.03);
    \draw[line width=0.3mm] (-1.2,1) node{\tiny $\mu_o^{max}$} (3.1,1)
    node{\tiny $1$} (3.1,0.5)  node{\tiny $1$} (3.1,-0.25) node{\tiny $1$}
   (3.1,-0.75) node{\tiny $1$} (3.1,-1) node{\tiny $1$} (1,-1.8) node{\tiny $(1)$ $\varphi_2:C_2\to T\pb^1$};
    \draw[line width=0.3mm,blue] (3,0) node{\tiny $\frac{\mu_o^{max}-1}{2}$ tails};
    \draw[line width=0.3mm] (-1,-1.5)--(3,-1.5);
    \draw[line width=0.3mm] (5,1)--(9,-1);
    \draw[line width=0.3mm] (8.5,-0.75)--(8,-0.75)--(5,-1);
    \draw[line width=0.3mm] (8,-0.75)--(5,-0.5);
    \draw[line width=0.3mm] (6,0.5)--(5.5,0.5)--(5,0.25);
    \draw[line width=0.3mm] (5.5,0.5)--(5,0.75);
    \draw[line width=0.3mm,dotted] (5.5,-0.5)--(5.5,0.3);
    \draw[line width=0.2mm,gray] (4.8,-0.75)--(4.7,-0.75)--(4.7,0.5)--(4.8,0.5);
    \foreach \Point in {(5.5,-1.5),(6,-1.5),(8,-1.5),(8.5,-1.5)}
    \draw[fill=black] \Point circle (0.03);
    \draw[line width=0.3mm] (9.2,-0.8) node{\tiny $\mu_o^{max}$} (4.9,-1)
    node{\tiny $1$} (4.9,-0.5)  node{\tiny $1$} (4.9,0.25) node{\tiny $1$}
   (4.9,0.75) node{\tiny $1$} (4.9,1) node{\tiny $1$} (7,-1.8) node{\tiny $(2)$ $\varphi_4:C_4\to T\pb^1$};
    \draw[line width=0.3mm,blue] (5,0) node{\tiny $\frac{\mu_o^{max}-1}{2}$ tails};
    \draw[line width=0.3mm] (5,-1.5)--(9,-1.5);
    \end{tikzpicture}
    \caption{\small Two generalized zigzag covers $\varphi_2$ and $\varphi_4$.}
    \label{fig:asymp1}
\end{figure}
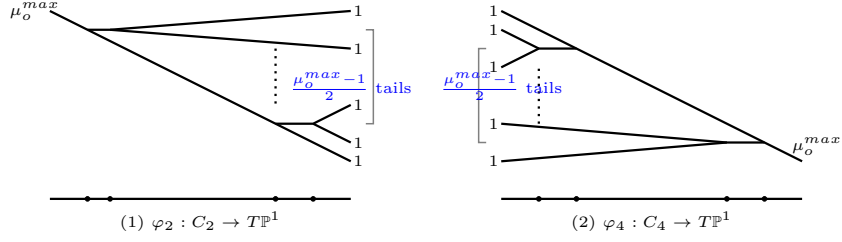

We apply Lemma \ref{lem:glue1} to the generalized zigzag covers $\varphi_1$ and $\varphi_2$.
Then, we obtain a generalized zigzag cover $\varphi':C'\to T\pb^1$ of type
$(g,(\lambda,1^{2a}),(\mu\setminus\mu_o^{max},1^{2a+\mu_o^{max}}),\undl z')$.
There is an outward end $l_2\subset C'$ weighted by $1$ and not located in a symmetric fork.
Next, we glue $\varphi'$ with $\varphi_3$ along $l_2$ and $l_3'$.
By applying Lemma \ref{lem:glue1} again, we obtain a generalized zigzag cover $\varphi'':C''\to T\pb^1$
of type $(g,(\lambda,1^{2a+m-1}),(\mu\setminus\mu_o^{max},1^{2a+\mu_o^{max}+m-1}),\undl z'')$.
In the non-empty subset $S''\subset C''$, there is an outward end $l_3$ weighted by $1$ and not in a symmetric fork.
Finally, we glue $\varphi''$ with $\varphi_4$ along $l_3$ and $l_4'$ to get a generalized zigzag cover $\varphi:C\to T\pb^1$ of type
$(g,(\lambda,1^{2a+m-1+\mu_o^{max}-1}),(\mu,1^{2a+\mu_o^{max}+m-2}),\undl z)$
by Lemma \ref{lem:glue1}.
Since there are at least two outward ends and two inward ends, each weighted by 1 and not located in a symmetric fork, in the non-empty subgraph
$S_3\subset C_3$, it follows that there is also an inward end, weighted by 1 and not located in a symmetric fork, in the non-empty subgraph
$S\subset C$.
See Figure \ref{fig:example-UEC3} for an example.
    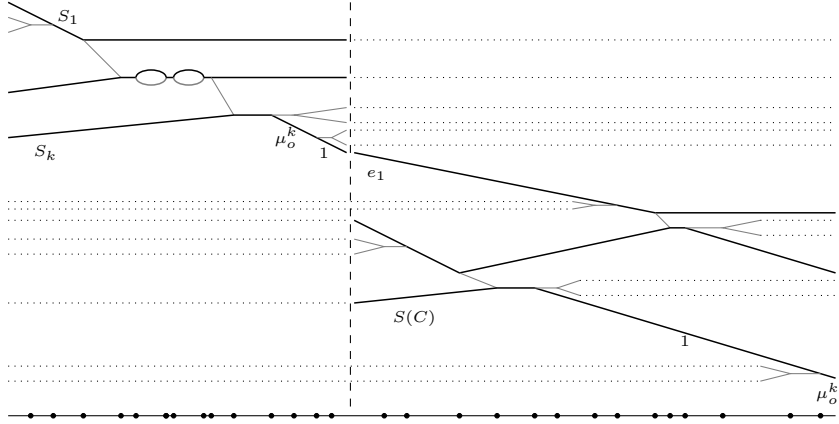
\begin{figure}[ht]
    \centering
    \begin{tikzpicture}
    \draw[line width=0.2mm] (-6,2)--(-5,1.5)--(-1.5,1.5);
    \draw[gray] (-6,1.8)--(-5.7,1.7)--(-6,1.6);
    \draw[gray] (-5.7,1.7)--(-5.4,1.7);
    \draw[gray] (-5,1.5)--(-4.5,1);
    \draw[line width=0.2mm] (-6,0.8)--(-4.5,1)--(-4.3,1);
    \draw[line width=0.2mm] (-4.1,1) +(0:0.2 and 0.1) arc (0:180:0.2 and 0.1);
    \draw[line width=0.2mm,gray] (-4.1,1) +(0:0.2 and 0.1) arc (0:-180:0.2 and 0.1);
    \draw[line width=0.2mm] (-3.9,1)--(-3.8,1);
    \draw[line width=0.2mm] (-3.6,1) +(0:0.2 and 0.1) arc (0:180:0.2 and 0.1);
    \draw[line width=0.2mm,gray] (-3.6,1) +(0:0.2 and 0.1) arc (0:-180:0.2 and 0.1);
    \draw[line width=0.2mm] (-3.4,1)--(-1.5,1);
    \draw[gray] (-3.3,1)--(-3,0.5);
    \draw[line width=0.2mm] (-6,0.2)--(-3,0.5)--(-2.5,0.5)--(-1.5,0);
    \draw[gray] (-1.5,0.6)--(-2.2,0.5)--(-1.5,0.4);
    \draw[gray] (-2.5,0.5)--(-2.2,0.5);
    \draw[gray] (-1.5,0.3)--(-1.7,0.2)--(-1.5,0.1);
    \draw[gray] (-1.9,0.2)--(-1.7,0.2);
    \draw (-2.3,0.2) node{\tiny $\mu_o^k$} (-1.8,0) node{\tiny $1$};
    \draw (-5.2,1.8) node {\tiny $S_1$} (-5.5,0) node {\tiny $S_k$};
    \draw[line width=0.2mm] (-1.4,0)--(2.6,-0.8)--(5,-0.8);
    \draw[gray] (1.8,-0.7)--(2.1,-0.7);
    \draw[gray] (1.5,-0.65)--(1.8,-0.7)--(1.5,-0.75);
    \draw[gray] (2.6,-0.8)--(2.8,-1);
    \draw[line width=0.2mm] (-1.4,-0.9)--(0,-1.6)--(2.8,-1)--(3,-1)--(5,-1.6);
    \draw[gray] (3,-1)--(3.5,-1);
    \draw[gray] (4,-0.9)--(3.5,-1)--(4,-1.1);
    \draw[gray] (-1,-1.25)--(-0.7,-1.25);
    \draw[gray] (-1.4,-1.35)--(-1,-1.25)--(-1.4,-1.15);
    \draw[gray] (0,-1.6)--(0.5,-1.8);
    \draw[line width=0.2mm] (-1.4,-2)--(0.5,-1.8)--(1,-1.8)--(5,-3);
    \draw[gray] (1,-1.8)--(1.3,-1.8);
    \draw[gray] (1.6,-1.7)--(1.3,-1.8)--(1.6,-1.9);
    \draw[gray] (4.8,-2.94)--(4.4,-2.94);
    \draw[gray] (4,-2.84)--(4.4,-2.94)--(4,-3.04);
    \draw (4.9,-3.2) node{\tiny $\mu_o^k$} (3,-2.5) node{\tiny $1$};
    \draw (-1.1,-0.3) node {\tiny $e_1$} (-0.6,-2.2) node {\tiny $S(C)$};
    \draw[dotted] (-1.4,1.5)--(5,1.5);
    \draw[dotted] (-1.4,1)--(5,1);
    \draw[dotted] (-1.4,0.6)--(5,0.6);
    \draw[dotted] (-1.4,0.4)--(5,0.4);
    \draw[dotted] (-1.4,0.3)--(5,0.3);
    \draw[dotted] (-1.4,0.1)--(5,0.1);
    \draw[dotted] (-6,-0.65)--(1.5,-0.65);
    \draw[dotted] (-6,-0.75)--(1.5,-0.75);
    \draw[dotted] (-6,-0.9)--(-1.5,-0.9);
    \draw[dotted] (-6,-1.35)--(-1.5,-1.35);
    \draw[dotted] (-6,-1.15)--(-1.5,-1.15);
    \draw[dotted] (-6,-2)--(-1.5,-2);
    \draw[dotted] (-6,-2.84)--(4,-2.84);
    \draw[dotted] (-6,-3.04)--(4,-3.04);
    \draw[dotted] (4,-0.9)--(5,-0.9);
    \draw[dotted] (4,-1.1)--(5,-1.1);
    \draw[dotted] (1.6,-1.7)--(5,-1.7);
    \draw[dotted] (1.6,-1.9)--(5,-1.9);
    \draw (-6,-3.5)--(5,-3.5);
    \draw[dashed] (-1.45,2)--(-1.45,-3.4);
    \foreach \Point in {(-5.7,-3.5),(-5.4,-3.5),(-5,-3.5),(-4.5,-3.5),(-4.3,-3.5),(-3.9,-3.5),(-3.8,-3.5),(-3.4,-3.5),(-3.3,-3.5),(-3,-3.5),(-3.9,-3.5),(-2.5,-3.5),(-2.2,-3.5),(-1.9,-3.5),(-1.7,-3.5),(2.6,-3.5),(2.8,-3.5),(2.1,-3.5),(1.8,-3.5),(0,-3.5),(0.5,-3.5),(3,-3.5),(3.5,-3.5),(-0.7,-3.5),(-1,-3.5),(1,-3.5),(1.3,-3.5),(4.8,-3.5),(4.4,-3.5)}
    \draw[fill=black] \Point circle (0.03);
    \end{tikzpicture}
    \caption{An example of glued universally enhanced cover.}
    \label{fig:example-UEC3}
\end{figure}
When $\mu_o^{max}=1$, we get a generalized zigzag cover $\varphi$ by
gluing $\varphi_1$ with $\varphi_3$. Let $n_0=2a+\mu_o^{max}-2$.
Since $m=h-2a-\mu_o^{max}+2>3$, we know $h=2a+m+\mu_o^{max}-2>n_0+3$.
Hence, from Lemma \ref{lem:glue1}, the number of generalized zigzag covers
of type $(g,(\lambda,1^h),(\mu,1^h),\undl z)$ satisfies
$$
|\zl_g((\lambda,1^{h}),(\mu,1^{h});\Lambda_{s,0})|\geq |\zl_g((1^{h-n_0}),(1^{h-n_0});\Lambda_{h-n_0-1,0})|.
$$
\end{proof}

We put
$$
z_{g,\lambda,\mu}(h)=Z_g((\lambda,(1^h)),(\mu,(1^h));\Lambda_{s,0}),
$$
where $2s=l(\lambda)+l(\mu)+2g-2+2h$. 
\begin{corollary}
\label{cor:asymp1}
Fix an integer $g\geq0$ and two partitions $\lambda$ and $\mu$ such that $|\lambda|=|\mu|$ and $l(\lambda_e)=l(\mu_e)=0$.
Then, the logarithmic asymptotics for the number $z_{g,\lambda,\mu}(h)$ is at least $\frac{h}{3}\ln h$ as $h\to\infty$.
\end{corollary}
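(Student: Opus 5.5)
The plan is to chain Theorem~\ref{thm:asymp1}, Lemma~\ref{lem:asymp1} and Stirling's formula. By Definition~\ref{def:gen-zig-num} every isomorphism class contributes $2^{m(\varphi)}\geq 1$, so
\[
z_{g,\lambda,\mu}(h)\;\geq\;|\zl_g((\lambda,1^{h}),(\mu,1^{h});\Lambda_{s,0})|.
\]
For $h>n_0+3$, with $n_0$ the constant of Theorem~\ref{thm:asymp1} (depending only on $\lambda,\mu$; I will take $g$ fixed as in the statement), Theorem~\ref{thm:asymp1} bounds the right-hand side below by $|\zl_g((1^{h-n_0}),(1^{h-n_0});\Lambda_{h-n_0-1,0})|$.

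One point needs care here. Lemma~\ref{lem:asymp1} is a genus-zero statement, but in the proof of Theorem~\ref{thm:asymp1} the genus is carried entirely by the cover $\varphi_1$ of Theorem~\ref{thm:non-vanish-UEC}. The covers $\varphi_3$ coming from Lemma~\ref{lem:asymp1} enter through the gluing of Lemma~\ref{lem:glue1}, which is injective on isomorphism classes. So the count that actually comes out of that proof is at least the number of genus-zero generalized zigzag covers of type $(0,(1^{m}),(1^{m}),\undl z)$ with $m=h-n_0$. I will read the inequality of Theorem~\ref{thm:asymp1} in this sense.

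Applying Lemma~\ref{lem:asymp1} with $m=h-n_0>3$ then gives
\[
z_{g,\lambda,\mu}(h)\geq \Big\lfloor \tfrac{h-n_0-1}{3}\Big\rfloor !\,.
\]
Set $N(h)=\lfloor (h-n_0-1)/3\rfloor$. Since $n_0$ is fixed, $N(h)=\frac{h}{3}+O(1)$. Stirling's formula $\ln N!=N\ln N-N+O(\ln N)$ then yields
\[
\ln z_{g,\lambda,\mu}(h)\geq \tfrac{h}{3}\ln h-O(h)
\]
as $h\to\infty$. Dividing by $\frac h3\ln h$ gives $\liminf_{h\to\infty}\ln z_{g,\lambda,\mu}(h)/(\frac h3\ln h)\geq1$, which is the claimed logarithmic lower bound.

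The only real obstacle is the bookkeeping just described: the composite covers built in Theorem~\ref{thm:asymp1} must be pairwise non-isomorphic, and the genus-$g$ part must not spoil the factorial count. Both follow from the injectivity statement at the end of the proof of Lemma~\ref{lem:glue1}, applied with $\varphi_1,\varphi_2,\varphi_4$ held fixed and only $\varphi_3$ varying. The remaining step is a routine Stirling estimate.
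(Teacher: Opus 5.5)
Your proposal is correct and follows the paper's proof. Like the paper, you chain Theorem~\ref{thm:asymp1} with Lemma~\ref{lem:asymp1} to get $z_{g,\lambda,\mu}(h)\geq\lfloor (h-n_0-1)/3\rfloor!$ and finish with Stirling's formula. Reading the right-hand side of Theorem~\ref{thm:asymp1} as a genus-zero count is also right: the subscript $g$ there must be $0$ to be consistent with $\Lambda_{h-n_0-1,0}$, as the later use of $\mathcal{Z}_0$ in Theorem~\ref{thm:asymp2} confirms, and your explicit $2^{m(\varphi)}\geq 1$ step is implicit in the paper.
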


\begin{proof}
If $h$ is large enough, from Lemma \ref{lem:asymp1} and Theorem \ref{thm:asymp1}, the generalized zigzag number
$$
z_{g,\lambda,\mu}(h)\geq \left\lfloor\frac{h-n_0-1}{3}\right\rfloor!,
$$
where $n_0$ is the number determined by Theorem \ref{thm:asymp1}.
Hence, we have
$$
\ln z_{g,\lambda,\mu}(h)\geq \ln\left\lfloor\frac{h}{3}-\frac{n_0+1}{3}\right\rfloor!.
$$
As $\ln x!\sim\ln x^x$ when $x\to\infty$,
we obtain the required lower bound of the logarithmic
asymptotics for $z_{g,\lambda,\mu}(h)$ when $h\to\infty$.
\end{proof}


\section{Lower bounds of real double Hurwitz numbers with triple ramification}\label{sec:6}
When $s\neq0$ and $t\neq0$, the generalized zigzag number 
$Z_g(\lambda,\mu;\Lambda_{s,t})$ depends on the sequence of partitions $\Lambda_{s,t}$.
In this section, we introduce the proper zigzag number 
$Z_g(\lambda,\mu;s,t)\leq Z_g(\lambda,\mu;\Lambda_{s,t})$ 
that does not depend on the sequence $\Lambda_{s,t}$.

\subsection{Proper zigzag numbers}
Let $g\geq0$, $d\geq1$ be two integers,
and suppose that $\lambda$, $\mu$ are two partitions of $d$.
Let $s,t$ be two non-negative integers such that $2s+t=l(\lambda)+l(\mu)+2g-2$.
Choose a set $\undl w=\{w_1,\ldots,w_{2s+t}\}\subset\rb$ of $2s+t$ points whose elements
satisfy $w_1<\cdots<w_{2s+t}$.
Let $\undl z=\undl x\sqcup\undl y$ be the $(s,t)$-distribution given by
$\undl x=\{x_1=(w_{1},w_2),\ldots,x_{s}=(w_{2s-1},w_{2s})\}$ and $\undl y=\{w_{2s+1},\ldots,w_{2s+t}\}$.
We call $\undl z=\undl x\sqcup\undl y$ a trivial $(s,t)$-distribution.
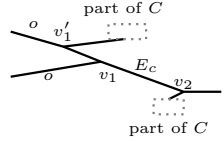
\begin{figure}[ht]
    \centering
    \begin{tikzpicture}
    \draw[line width=0.3mm] (-1.3,0.3)--(-0.6,0.1)--(-0.1,-0.1)--(1,-0.5)--(1.5,-0.5);
    \draw[line width=0.3mm] (-0.6,0.1)--(0.2,0.2);
    \draw[line width=0.3mm] (-1.3,-0.3)--(-0.1,-0.1);
    \draw[line width=0.3mm] (1,-0.5)--(0.8,-0.6);
    \draw[line width=0.3mm,gray,dotted] (0,0.2)--(0.5,0.2)--(0.5,0.4)--(0,0.4)--(0,0.2);
    \draw[line width=0.3mm,gray,dotted] (0.6,-0.6)--(1,-0.6)--(1,-0.8)--(0.6,-0.8)--(0.6,-0.6);
    \draw[line width=0.3mm] (-1,0.4) node{\tiny $o$} (-0.8,-0.3)
    node{\tiny $o$} (0.5,-0.15) node{\tiny $E_c$} (0.2,0.6) node{\tiny part of $C$}
 (0.8,-1) node{\tiny part of $C$} (1,-0.4) node{\tiny $v_2$} (0,-0.3) node{\tiny $v_1$} (-0.6,0.3) node{\tiny $v_1'$};
    \end{tikzpicture}
    \caption{\small Characteristic edge $E_c$ of a properly mixed generalized zigzag cover.}
    \label{fig:refine-zig}
\end{figure}

\begin{definition}
\label{def:ref-zig-cov}
Let $\undl z=\undl x\sqcup\undl y$ be a trivial $(s,t)$-distribution.
A generalized zigzag cover $\varphi:C\to T\pb^1$ of type $(g,\lambda,\mu,\undl x\sqcup\undl y)$
is called \textit{properly mixed} if it satisfies the following conditions.
\begin{enumerate}[$(1)$]
    \item There exists an odd inner edge $E_c\subset S$ such that
$C\setminus E_c^\circ$ consists of two connected components.
Moreover, vertices of one component of $C\setminus E_c^\circ$ are mapped onto $\undl x$ by $\varphi$,
and vertices of the other component of $C\setminus E_c^\circ$ are mapped onto $\undl y$ by $\varphi$.
The inner edge $E_c$ is called the {\it characteristic} edge.
    \item Let $v_1, v_2$ be the two endpoints of $E_c$, and suppose that $E_c$ is oriented from $v_1$ to $v_2$ by $\varphi$.
There exists a vertex $v_1'\in C$ such that $(v_1',v_1)$ is mapped to the pair $x_1\in\undl x$ by $\varphi$.
\item The local picture of $C$ at $(v_1',v_1)$ is depicted in Figure $\ref{fig:pair-EC}(\textrm{xii})$.
Moreover, the two odd incoming edges of $(v_1',v_1)$ are ends of $C$ with the same weight.
\item
The vertex $v_2$ is mapped to $w_{2s+t}$ by $\varphi$, and $v_2$ has two odd incoming edges.
(see Figure \ref{fig:refine-zig}).
\end{enumerate}
\end{definition}

\begin{definition}
\label{def:ref-zig-num}
Let $\zl_g(\lambda,\mu;s,t)$ be the set of properly
mixed generalized zigzag covers of type $(g,\lambda,\mu,\undl x\sqcup\undl y)$.
The number of properly mixed generalized zigzag covers of type $(g,\lambda,\mu,\undl x\sqcup\undl y)$
is called the \textit{properly zigzag number}, and it is denoted by $Z_g(\lambda,\mu;s,t)$.
\end{definition}

\subsection{Wall-crossing and lower bounds for generalized zigzag numbers}
We study the wall-crossing phenomenon that occurs when a simple branch point crosses a triple branch point. 
Then we show that properly zigzag numbers
are lower bounds for generalized zigzag numbers for any $(s,t)$-tuple $\Lambda_{s,t}$.

\begin{theorem}
\label{thm:lower-bounds}
Let $g\geq0$ be an integer,
and let $\lambda$, $\mu$ be two partitions such that $|\lambda|=|\mu|$.
Let $s,t$ be two non-negative integers satisfying $2s+t=l(\lambda)+l(\mu)+2g-2$.
Then, for any $(s,t)$-tuple $\Lambda_{s,t}$, we have the inequality
\[
Z_g(\lambda,\mu;s,t)\leq Z_g(\lambda,\mu;\Lambda_{s,t}).
\]
\end{theorem}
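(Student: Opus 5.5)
We would prove Theorem \ref{thm:lower-bounds} by an explicit injection. Fix an $(s,t)$-tuple $\Lambda_{s,t}$ and an $(s,t)$-distribution $\undl z=\undl x\sqcup\undl y$ compatible with it. We will construct an injective map from the set $\zl_g(\lambda,\mu;s,t)$ of properly mixed generalized zigzag covers (built over the trivial distribution, where all pairs precede all simple points) into $\zl_g(\lambda,\mu;\Lambda_{s,t})$. Since each cover in $\zl_g(\lambda,\mu;\Lambda_{s,t})$ is weighted by $2^{m(\varphi)}\geq1$, injectivity gives the inequality.

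Let $\varphi$ be properly mixed, with characteristic edge $E_c$ from $v_1$ to $v_2$. By Definition \ref{def:ref-zig-cov}, removing $E_c^\circ$ splits $C$ into an ``$x$-part'' $C_x$ and a ``$y$-part'' $C_y$. The combinatorial structure (orientation, weights, contractible pairs) carries no positional data. By Remark \ref{rmk-combin1}, any total order of the inner vertices that extends the orientation partial order and keeps the two vertices of each contractible pair adjacent defines a resolving tropical cover. The aim is to find such an order realising the pattern of pair positions and simple-point positions prescribed by $\Lambda_{s,t}$.

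The key observation is that, by conditions (2)--(4), the only edge joining $C_x$ and $C_y$ is $E_c$. Its tail $v_1$ belongs to the first pair $x_1$, whose two incoming odd edges are ends. Its head $v_2$ has two odd incoming edges, one being $E_c$ and the other lying in $C_y$. So the partial order imposes no constraint between $C_x$ and $C_y$ except $v_1<v_2$. Moreover $v_1'<v_1$, and both have only ends or $E_c$ below them in $C_x$, so the pair $x_1$ can be placed first.

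The construction proceeds in three steps. First, we keep the internal order of $C_x$ (pairs $x_1,\dots,x_s$ in order) and the internal order of $C_y$ (points $y_1,\dots,y_t$ in order). Second, we interleave these two chains as blocks according to the pattern of triple and simple entries in $\Lambda_{s,t}$. Third, we check that the interleaving is a linear extension. This holds because the first entry of $C_x$ is $x_1$, which contains $v_1$. If $\Lambda_{s,t}$ begins with simple points, then we must show those $y$-vertices are not above $v_2$ in the partial order, or else reorder $C_y$ by a different linear extension so that $v_2$ comes after at least as many $y$-points as precede the first triple. This is the step I expect to be the main obstacle. The fix I would try first is to use the fact that $v_2$ is mapped to $w_{2s+t}$, the maximum of the trivial distribution. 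Hence $v_2$ is a maximal vertex of $C_y$, all other vertices of $C_y$ are incomparable to or below it, and every $y$-vertex other than $v_2$ is free relative to $C_x$. The only remaining constraint is that $v_2$ comes after $x_1$. This always holds if we place $v_2$ last among the $y$'s, which forces $\Lambda_{s,t}$ to have at least one triple entry before its final simple entry. If instead all triple entries come after all simple ones, then we swap the roles via the symmetry reversing the orientation of $T\pb^1$, or handle the case $t=0$ or $s=0$ trivially.

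The resulting cover satisfies Definition \ref{def:gen-zig-cov} with the same subgraph $S$, because the conditions are independent of positions, as noted in Remark \ref{rem:inv-gen-zig-num}. It is also compatible with $\Lambda_{s,t}$. Injectivity follows as in the last paragraph of the proof of Lemma \ref{lem:glue1}: the underlying weighted oriented graph together with the relative orders on $C_x$ and on $C_y$ is recovered from the image, and these determine $\varphi$ up to isomorphism.
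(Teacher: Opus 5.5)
Your interleaving argument is correct, and it is essentially the paper's treatment of every tuple $\Lambda_{s,t}$ other than $\Lambda$, the tuple whose $t$ simple entries all precede its $s$ triple entries. The only relation between the $x$-part and the $y$-part is $v_1<v_2$, so as soon as some triple entry precedes the last simple entry you can merge the two chains with $\varphi^{-1}(x_1)$ before $v_2$.

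The gap is the case $\Lambda_{s,t}=\Lambda$ with $s,t\geq1$. Here no linear extension on the same curve $C$ with the same pairs can work. Every $y$-vertex, in particular $v_2$, must come before every pair, while $v_1\in\varphi^{-1}(x_1)$ must come before $v_2$. Choosing another linear extension of $C_y$ changes nothing.

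Your fallback of reversing the orientation of $T\pb^1$ does not repair this. The reflection $x\mapsto -x$ turns covers of type $(g,\lambda,\mu)$ into covers of type $(g,\mu,\lambda)$ and reverses the tuple. Combined with your first case, it only gives $Z_g(\mu,\lambda;s,t)\leq Z_g(\lambda,\mu;\Lambda)$. Nothing relates $Z_g(\mu,\lambda;s,t)$ to $Z_g(\lambda,\mu;s,t)$, because properly mixed covers are defined asymmetrically. The pairs sit to the left of all simple points in the trivial distribution, and condition (3) of Definition~\ref{def:ref-zig-cov} concerns two \emph{inward} ends of equal odd weight, i.e.\ a repeated odd entry of $\lambda$.

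What is missing is a wall-crossing that changes the source curve. The paper proceeds as follows.
\begin{enumerate}
\item Contract $E_c$ together with the contractible edge $E$ at $v_1$, so that $v_1',v_1,v_2$ collapse to a $5$-valent vertex.
\item Replace the two inward ends of weight $o$ by an inward tail with a symmetric fork; its vertex $\tilde v$ serves as a new simple point.
\item Resolve the remaining $4$-valent vertex into a pair $(\tilde v',\tilde v'')$ of type Figure~\ref{fig:pair-EC}(vii). Its incoming edges are the edge of weight $2o$ and the odd edge from $C_y$; its outgoing edges are the odd edge into $C_x$ and the even end.
\item Order the vertices as the old $t-1$ simple vertices, then $\tilde v$, then $(\tilde v',\tilde v'')$, then the remaining $s-1$ pairs.
\end{enumerate}
This gives a generalized zigzag cover compatible with $\Lambda$. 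Since $C$ and its order can be read off from the result, the map is injective.

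This construction is exactly where the equal-weight requirement in condition (3) and the parity requirement in condition (4) are used. Your argument never uses either, which is a symptom of the missing case.
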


\begin{proof}
We suppose that $\zl_g(\lambda,\mu;s,t)\neq\emptyset$.
To establish our claim, it suffices to show that for any $(s,t)$-tuple $\Lambda_{s,t}$, there exists an injective map
$$
\Phi:\zl_g(\lambda,\mu;s,t)\to\zl_g(\lambda,\mu;\Lambda_{s,t}).
$$
Let $\varphi:C\to T\pb^1\in\zl_g(\lambda,\mu;s,t)$ be a properly mixed generalized zigzag cover
of type $(g,\lambda,\mu,\undl x\sqcup\undl y)$.
In this paragraph, we recall some properties of $\varphi$.
Let $\undl w=\{w_1,\ldots,w_{2s+t}\}\subset\rb$ be the set that determines the trivial $(s,t)$-distribution $\undl z=\undl x\sqcup\undl y$.
Let $v_1'$, $v_1$ and $v_2$ be the vertices of $C$ as given in Definition \ref{def:ref-zig-cov}.
Then $(v_1',v_1)=\varphi^{-1}(x_1)$ and $v_2=\varphi^{-1}(w_{2s+t})$.
The tropical curve $C$ is oriented and weighted by $\varphi$ according to
Remark \ref{rmk-combin1}.
The orientation $\ol$ of $C$ induces a partial order $\pl$ on the inner vertices of $C$.
Let $W$ be the weight function on $C$.
From Remark \ref{rmk-combin1}, the cover $\varphi$ and $\undl z$ induce a total order $O(C)$ on
the inner vertices of $C$. More precisely, $O(C)$ is the sequence $\varphi^{-1}(x_1),\ldots,\varphi^{-1}(x_s),\varphi^{-1}(w_{2s+1}),\ldots,\varphi^{-1}(w_{2s+t})$.
The total order $O(C)$ is an extension of the partial order $\pl$.
Denote by $\undl v_y$ the set of all inner vertices of $C$ that are mapped onto the set $\undl y$,
and by $\undl v_x$ the set of all pairs $(v,v')$ of inner vertices of $C$ that are mapped onto $\undl x$.
Let $O(C)|_{\undl v_y}$ and $O(C)|_{\undl v_x}$
be the restricted total orders of $O(C)$ on $\undl v_y$ and $\undl v_x$, respectively.
Note that $O(C)|_{\undl v_y}$ is the sequence $\varphi^{-1}(w_{2s+1}),\ldots,\varphi^{-1}(w_{2s+t})$,
and $O(C)|_{\undl v_x}$ is the sequence $\varphi^{-1}(x_1),\ldots,\varphi^{-1}(x_s)$.

Let $\Lambda=(\Lambda_1',\ldots,\Lambda_{s+t}')$ be
an $(s,t)$-tuple such that $\Lambda_i'=(2,1,\ldots,1)$ for $i=1,\dots,t$
and $\Lambda_j'=(3,1,\ldots,1)$ for $j=t+1,\ldots,t+s$.
We first consider the case where $\Lambda_{s,t}\neq\Lambda$.
In what follows, we construct a new total order $O_{\Lambda_{s,t}}(C)$ on the inner vertices of $C$
such that $O_{\Lambda_{s,t}}(C)$ is an extension of $\pl$ and corresponds to a generalized
zigzag cover in the set $\zl_g(\lambda,\mu;\Lambda_{s,t})$.
Let $\undl z'=\undl x'\sqcup\undl y'$ be an $(s,t)$-distribution that is compatible with $\Lambda_{s,t}$,
\textit{i.e.}, $\Lambda_i=(3,1,\ldots,1)$ iff $z_i'\in\undl x'$, and $\Lambda_j=(2,1,\ldots,1)$ iff $z_j'\in\undl y'$.
Suppose that the elements of $\undl z'$ satisfy $z'_1<\cdots<z'_{s+t}$ 
({\it i.e.}, if $z_i'=(x_{2i}',x_{2i+1}')$ is a pair, then $z_i'<z_j'$ means $x_{2i}'<x_{2i+1}'<z_j'$ in the natural order of real numbers).
Let $O_1(C)=u_1,\ldots,u_{s+t}$ be the total order determined as follows.
If $z_1'\in\undl x'$, let $u_1=\varphi^{-1}(x_1)$. Otherwise, let $u_1=\varphi^{-1}(w_{2s+1})$.
Assume that $u_i$ is already defined for some $i\in\{1,\ldots,s+t-1\}$.
Suppose that $\varphi^{-1}(x_1),\ldots,\varphi^{-1}(x_{i_0}),\varphi^{-1}(w_{2s+1}),\ldots,\varphi^{-1}(w_{2s+i_1})$
have already been used to define $u_1,\ldots,u_i$.
We set $u_{i+1}=\varphi^{-1}(x_{i_0+1})$ if $z_{i+1}'\in\undl x'$. Otherwise,
we set $u_{i+1}=\varphi^{-1}(w_{2s+i_1+1})$.
By induction, we construct a total order $O_{\Lambda_{s,t}}(C)$ on the inner vertices of $C$.

We now prove that the total order $O_{\Lambda_{s,t}}(C)$ is an extension of $\pl$,
and corresponds to a generalized
zigzag cover in the set $\zl_g(\lambda,\mu;\Lambda_{s,t})$.
Since $O_{\Lambda_{s,t}}(C)|_{\undl v_x}=O(C)|_{\undl v_x}$ and $O_{\Lambda_{s,t}}(C)|_{\undl v_y}=O(C)|_{\undl v_y}$,
the total order $O_{\Lambda_{s,t}}(C)$ is an extension of $\pl|_{\undl v_x}$ and $\pl|_{\undl v_y}$.
From Definition \ref{def:ref-zig-cov}(2) and (4),
the restriction $\pl|_{\{\varphi^{-1}(x_i),\varphi^{-1}(w_{2s+j})\}}$ of $\pl$ to 
the set $\{\varphi^{-1}(x_i),\varphi^{-1}(w_{2s+j})\}$ is a total order, if $i=1$ and $j=t$.
Otherwise, $\pl|_{\{\varphi^{-1}(x_i),\varphi^{-1}(w_{2s+j})\}}$ is an empty order.
Suppose that the $(s,t)$-tuple $\Lambda_{s,t}=(\Lambda_1,\ldots,\Lambda_{s+t})$.
Since $\Lambda_{s,t}\neq\Lambda$, 
there exists a pair $(k,l)$ such that $1\leq k<l\leq s+t$ and $\Lambda_k=(3,1,\ldots,1), \Lambda_l=(2,1,\ldots,1)$.
From the construction of $O_{\Lambda_{s,t}}(C)$ we know that
$\varphi^{-1}(x_1)$ precedes $\varphi^{-1}(w_{2s+t})$ in the total order $O_{\Lambda_{s,t}}(C)$.
Hence, $O_{\Lambda_{s,t}}(C)$ is indeed an extension of $\pl$.
Let $\varphi_{\Lambda_{s,t}}:C\to T\pb^1$ be the unique tropical cover corresponding to the
total order $O_{\Lambda_{s,t}}(C)$ according to \cite[Remark 5.3]{rau2019}.
Since $\varphi:C\to T\pb^1$ is a generalized zigzag cover,
the cover $\varphi_{\Lambda_{s,t}}:C\to T\pb^1$ is also a generalized zigzag cover by definition.
We define the map $\Phi$ by setting $\Phi(\varphi)=\varphi_{\Lambda_{s,t}}$.
The image $\varphi_{\Lambda_{s,t}}$ of $\Phi$ is a generalized zigzag cover of type
$(g,\lambda,\mu;\undl z'=\undl x'\sqcup\undl y')$.

In this paragraph, we prove that $\Phi$ is an injective map.
Let $\varphi_1:C_1\to T\pb^1$ and $\varphi_2:C_2\to T\pb^1$ be two non-isomorphic, properly mixed generalized zigzag covers.
If there is no isomorphism between the tropical curves $C_1$ and $C_2$,
then the images $\Phi(\varphi_1)$ and $\Phi(\varphi_2)$ are obviously not isomorphic.
Suppose that there is an isomorphism between the tropical curves $C_1$ and $C_2$.
Let $\undl v_y'$ (resp. $\undl v_x'$) be the set of vertices (resp. pairs of vertices) 
that are mapped onto $\undl y$ (resp. $\undl x$) by $\varphi_1$ or $\varphi_2$.
Then, the total order $O_1(C_1)=(O_1(C_1)|_{\undl v_x'}, O_1(C_1)|_{\undl v_y'})$ induced by $\varphi_1$ is not
the same as the total order $O_2(C_2)=(O_2(C_2)|_{\undl v_x'}, O_2(C_2)|_{\undl v_y'})$ induced by $\varphi_2$.
By definitions, the total orders $O_{\Lambda_{s,t}}(C_1)$ and $O_{\Lambda_{s,t}}(C_2)$ are different.
Therefore, the two covers $\Phi(\varphi_1)$ and $\Phi(\varphi_2)$ are not isomorphic.
Hence, the map $\Phi$ is an injective map.

When $\Lambda_{s,t}=\Lambda$, we define the map $\Phi$ by
considering wall-crossing behavior of properly mixed generalized zigzag covers.
Let $E_c$ be the characteristic odd edge with endpoints $v_1$ and $v_2$ as given in Definition \ref{def:ref-zig-cov} (see Figure \ref{fig:refine-zig}).
Let $E$ be the contractible edge of $C$ adjacent to $v_1$.
When the lengths of the edges $E_c$ and $E$ are shrunk to $0$,
the edges $E_c$ and $E$ degenerate to a $5$-valent vertex $v'$
in the shrunk tropical curve $C_0$ (see Figure \ref{fig:degenerate-curve}).
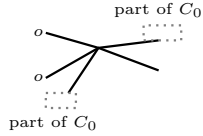
\begin{figure}[ht]
    \centering
    \begin{tikzpicture}
    \draw[line width=0.3mm] (-1.3,0.3)--(-0.6,0.1)--(0.2,0.2);
    \draw[line width=0.3mm] (-1.3,-0.3)--(-0.6,0.1)--(0.2,-0.2);
    \draw[line width=0.3mm] (-1,-0.5)--(-0.6,0.1);
    \draw[line width=0.3mm,gray,dotted] (0,0.2)--(0.5,0.2)--(0.5,0.4)--(0,0.4)--(0,0.2);
    \draw[line width=0.3mm,gray,dotted] (-1.3,-0.5)--(-0.9,-0.5)--(-0.9,-0.7)--(-1.3,-0.7)--(-1.3,-0.5);
    \draw[line width=0.3mm] (-1.4,0.3) node{\tiny $o$} (-1.4,-0.3)
    node{\tiny $o$} (0.2,0.6) node{\tiny part of $C_0$} (-1.2,-0.9) node{\tiny part of $C_0$};
    \end{tikzpicture}
    \caption{\small Degenerate curve $C_0$.}
    \label{fig:degenerate-curve}
\end{figure}
Since every edge in $C_0$ is also an edge in $C$,
the restrictions $\ol|_{C_0}$ and $W|_{C_0}$ of the orientation $\ol$
and weight function $W$ on $C$ provide an orientation and a weight function on $C_0$.
The balancing conditions at the vertices $v_1',v_1,v_2$ of $C$ imply the balancing condition at $v'$ of $C_0$.
Let $\undl v_y'=\undl v_y\setminus\{v_2\}$ and $\undl v_x'=\undl v_x\setminus\{(v_1',v_1)\}$.

Now, we construct an oriented and weighted graph with only $1$-valent and $3$-valent vertices that resolves the degenerate curve $C_0$.
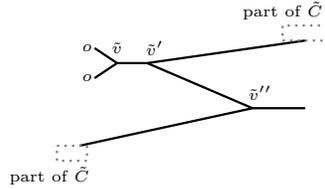
\begin{figure}[ht]
    \centering
    \begin{tikzpicture}
    \draw[line width=0.3mm] (-1.3,0.3)--(-1,0.1)--(-0.6,0.1)--(1.5,0.4);
    \draw[line width=0.3mm] (-1.3,-0.1)--(-1,0.1);
    \draw[line width=0.3mm] (-0.6,0.1)--(0.8,-0.5)--(1.5,-0.5);
    \draw[line width=0.3mm] (-1.5,-1)--(0.8,-0.5);
    \draw[line width=0.3mm,gray,dotted] (1.2,0.4)--(1.8,0.4)--(1.8,0.6)--(1.2,0.6)--(1.2,0.4);
    \draw[line width=0.3mm,gray,dotted] (-1.8,-1)--(-1.4,-1)--(-1.4,-1.2)--(-1.8,-1.2)--(-1.8,-1);
    \draw[line width=0.3mm] (-1.4,0.3) node{\tiny $o$} (-1.4,-0.1)
    node{\tiny $o$} (-1,0.3) node{\tiny$\tilde v$} (-0.5,0.3) node{\tiny$\tilde v'$}
    (0.9,-0.3) node{\tiny$\tilde v''$} (1.2,0.8) node{\tiny part of $\tilde C$} (-1.9,-1.4) node{\tiny part of $\tilde C$};
    \end{tikzpicture}
    \caption{\small tropical curve $\tilde C$.}
    \label{fig:wall-crossing}
\end{figure}
Let $l_1,l_2$ be the two inward ends of the same odd weight in $C_0$ that are adjacent to $v'$.
First, we replace $l_1$ and $l_2$ with
an inward tail $T$ that has a symmetric fork of the first type in Figure \ref{fig:gzc-tail}.
Let $\tilde v$ denote the inner vertex of the tail $T$ (see Figure \ref{fig:wall-crossing}).
The $5$-valent vertex $v'\in C_0$ is resolved to become a $4$-valent vertex.
Next, we resolve the $4$-valent vertex as illustrated in Figure \ref{fig:pair-EC}(\textrm{vii}),
and consider the two resolving $3$-valent vertices $(\tilde v',\tilde v'')$ as a pair of vertices of the new graph.
Let $\tilde C$ be the graph obtained through this process (see Figure \ref{fig:wall-crossing}).
In the above procedure, all vertices and edges of $C_0$, except for $E, E_c, (v_1',v_1)$ and $v_2$, remain changed.
We show below that the graph $\tilde C$ is oriented and weighted by $\ol$ and $W$.
The edge $E'$ with endpoints $\tilde v',\tilde v''$ is oriented from $\tilde v'$ to $\tilde v''$, and it is a contractible edge.
Note that the inward tail $T$ is oriented from its leaves to $\tilde v'$.
The restricted orientation $\ol|_{\tilde C\setminus(T\cup E')}$
and the restricted weight function $W|_{\tilde C\setminus(T\cup E')}$ provide
an orientation and a weight function on $\tilde C\setminus(T\cup E')$.
The graph $\tilde C$ is oriented, and let $\tilde\ol$ denote the orientation on $\tilde C$.
By applying the balancing conditions at $\tilde v,\tilde v', \tilde v''$,
we can determine the weights of the two new inner edges adjacent to $\tilde v'$ in $\tilde C$.
Therefore, every edge of $\tilde C$ is weighted.
Let $\tilde\pl$ denote the partial order on inner vertices of $\tilde C$ that is determined by $\tilde\ol$.

We construct a total order $O_{\Lambda}(\tilde C)$ on the inner vertices of
$\tilde C$ such that $O_{\Lambda}(\tilde C)$ is an extension of $\tilde\pl$ and corresponds to a generalized
zigzag cover in the set $\zl_g(\lambda,\mu;\Lambda)$.
Let $O_{\Lambda}(\tilde C)$ be the sequence $O(C)|_{\undl v_y'}, \tilde v, \tilde v',\tilde v'', O(C)|_{\undl v_x'}$.
This sequence defines a total order on the inner vertices of $\tilde C$.
Note that $\tilde\pl|_{\undl v_x'}=\pl|_{\undl v_x'}$ and $\tilde\pl|_{\undl v_y'}=\pl|_{\undl v_y'}$,
so we can conclude that the total order $O_{\Lambda}(\tilde C)$ is an extension of $\tilde\pl$,
as illustrated in Figure \ref{fig:wall-crossing}.
Let $\tilde\varphi: \tilde C\to T\pb^1$ be the unique tropical cover corresponding to the total order $O_{\Lambda}(\tilde C)$ according to \cite[Remark 5.3]{rau2019}.
From the construction, it is easy to see that $\tilde\varphi$ is a generalized zigzag cover
of type $(g,\lambda,\mu;\Lambda)$.
We define the map $\Phi$ as follows:
$$
\begin{aligned}
\Phi:\zl_g(\lambda,\mu;s,t)&\to\zl_g(\lambda,\mu;\Lambda)\\
\varphi&\mapsto\tilde\varphi.
\end{aligned}
$$
Let $\varphi_1:C_1\to T\pb^1$ and $\varphi_2:C_2\to T\pb^1$ be two covers in $\zl_g(\lambda,\mu;s,t)$ such that
$\Phi(\varphi_1)$ is isomorphic to $\Phi(\varphi_2)$.
Then the tropical curve $\tilde C_1$ is isomorphic to $\tilde C_2$,
and the total order $O_{\Lambda}(\tilde C_1)$ is identical to $O_{\Lambda}(\tilde C_2)$.
From the construction and Defintion \ref{def:ref-zig-cov}, it is clear that
$\tilde C_1\setminus(T_1\cup E'_1)$ is isomorphic to $\tilde C_2\setminus(T_2\cup E'_2)$
and $O(\tilde C_1\setminus(T_1\cup E'_1))=O(\tilde C_2\setminus(T_2\cup E'_2))$,
where $T_i$ and $E_i'$ are the tails and contractible edges, respectively, in the previous construction.
Consequently, we deduce that $C_1$ is isomorphic to $C_2$ and $O(C_1)=O(C_2)$.
By \cite[Remark 5.3]{rau2019}, the two covers $\varphi_1$ and $\varphi_2$ are isomorphic.
Therefore, the map $\Phi$ is injective when $\Lambda_{s,t}=\Lambda$.
Hence, we have completed the proof.
\end{proof}

\section{Uniform asymptotics}
\label{sec:7}

We apply proper zigzag numbers to investigate the uniform asymptotic growth of
real double Hurwitz numbers with triple ramification,
as both the degree and the genus tend to infinity, while adding either simple or triple branch points.

\subsection{Uniform asymptotics for (real) double Hurwitz numbers}
In this section, we study the uniform bound for the logarithmic asymptotics of real double Hurwitz numbers
as both the degree and the genus tend to infinity.
\begin{lemma}
\label{lem:asymp2}
Let $m\geq1$ and $g\geq0$ be two integers.
Then, the zigzag number
$$
Z_g((1^{2m+1}),(1^{2m+1});\Lambda_{0,4m+2g})\geq
\frac{(2g)!}{\left((2\left\lfloor\frac{g}{m}\right\rfloor+2)!\right)^{m}}\cdot(m!)^4.
$$
\end{lemma}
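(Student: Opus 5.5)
We build many pairwise non-isomorphic generalized zigzag covers of type $(g,(1^{2m+1}),(1^{2m+1}),\undl y)$ with $s=0$ and $t=4m+2g$ simple branch points, in the spirit of Lemma \ref{lem:asymp1} and \cite[Section 5]{rau2019}. Since $s=0$, every such cover has odd multiplicity (Remark \ref{rmk:mult-parity}), so $Z_g$ is at least the number of covers constructed. There are two independent sources of freedom, which we treat as separate factors: the combinatorial shape of a genus-zero skeleton, which will give $(m!)^4$, and the placement of the $g$ loops together with the interleaving of their vertices in the total order, which will give $(2g)!/\bigl((2\lfloor g/m\rfloor+2)!\bigr)^m$.

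\textbf{Genus-zero skeleton.} Take one odd weight-$1$ string, i.e.\ the subgraph $S$, and attach $m$ inward tails of the first type of Figure \ref{fig:gzc-tail} (symmetric forks of weight $1$, giving even edges of weight $2$) and $m$ outward tails of the same type. This uses $2m$ plus $2m$ ends of weight $1$, together with the two ends of the string, hence $2m+1$ ends on each side. The string then carries odd weights alternating between $1$ and $3$, depending on the position of the tails. The inner vertices are the $m$ fork vertices on the left, the $m$ fork vertices on the right, and the $2m$ attaching vertices on the string, for $4m$ vertices in total. To get $(m!)^4$, use the freedom in the total order of Remark \ref{rmk-combin1}: the $m$ inward fork vertices are only constrained to precede their attaching points, and the outward ones to follow theirs. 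Choosing the interleavings as in the proof of Lemma \ref{lem:asymp1}, we get $m!$ orderings for each of four groups: the left forks, the right forks, and the two classes of string vertices, where we would use a zigzag (back-and-forth) pattern of the string between the tails. Distinctness follows as at the end of Lemma \ref{lem:glue1}. Should this shape fail, we would instead attach odd-weight cycles, letting the string pass alternately above and below, which also yields factorial counts.

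\textbf{Adding genus.} Distribute the $g$ loops into $m$ blocks of at most $\lfloor g/m\rfloor+1$ each. Each loop is a symmetric cycle of odd weight $1$ on an outward weight-$2$ tail, or alternatively a contractible circle on an odd end as in Theorem \ref{thm:non-vanish-UEC}, and each loop contributes $2$ vertices. The $2g$ loop vertices are ordered among themselves by an arbitrary total order, subject only to the constraint that within each block the order is forced, up to the $(2\lfloor g/m\rfloor+2)!$ arrangements of that block. Because blocks sit on different, mutually incomparable tails, any shuffle of the blocks extends the partial order. This gives at least $(2g)!/\prod_i(2g_i)!\ge(2g)!/\bigl((2\lfloor g/m\rfloor+2)!\bigr)^m$ distinct covers for each skeleton, since the shuffles give non-isomorphic covers.

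\textbf{Main obstacle.} The hardest step is verifying that the total orders counted in the two factors are simultaneously realisable, so that the counts multiply. This requires that the loop blocks be incomparable both with each other and with the skeleton choices, and that the resulting covers remain pairwise non-isomorphic despite the symmetric forks and cycles, whose automorphisms could identify orderings. To handle this, we would place each block on a distinct tail with a distinct role in the skeleton, which rigidifies it, and check non-isomorphism via the induced orders as in Lemma \ref{lem:glue1}.
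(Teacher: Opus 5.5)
Your overall plan matches the paper's: one weight-$1$ string, $m$ inward and $m$ outward tails with weight-$1$ symmetric forks, and the $g$ weight-$1$ symmetric cycles split into $m$ chains of length at most $\lfloor g/m\rfloor+1$, counted as $(m!)^4\cdot(2g)!/\prod_i(2g_i)!$. However, the skeleton you actually write down cannot produce $(m!)^4$.

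\textbf{The skeleton.} If the string weights alternate between $1$ and $3$, the string is monotone. Each string edge points from one attaching vertex to the next, so the $2m$ attaching vertices form a chain. The number of total orders on the $4m$ skeleton vertices is then at most $(4m)!/(2m)!$, whose logarithm is $2m\ln m+O(m)$ rather than $4m\ln m$. Your passing remark about a zigzag pattern points the right way, but it contradicts the weights you state, and you never check the property you need. In the correct zigzag \emph{every} string edge has weight $1$, since at each bend balancing reads $2=1+1$. The string runs left end $\to R_1\leftarrow L_1\to R_2\leftarrow\cdots\to R_m\leftarrow L_m\to$ right end, with inward tails attached at the $L_i$ and outward tails at the $R_i$. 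The only relations among string vertices are $L_i<R_i$ and $L_i<R_{i+1}$, so $\{L_i\}$ and $\{R_i\}$ are antichains. Lemma \ref{lem:asymp1} does not supply this, since its gluing yields only about $\lfloor (m-1)/3\rfloor!$ covers. Your fallbacks are also unavailable. With $s=0$ there are no contractible edges, so condition (2) of Definition \ref{def:gen-zig-cov} forbids a cycle on the string: such a cycle on an odd edge contains an even edge of $S$. Moreover, a weight-$1$ edge carries no cycle at all.

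\textbf{The main obstacle.} Once the zigzag is in place, the step you single out as the main obstacle disappears, and the paper's argument is exactly a layered order. The paper puts the cycles on the inward tails. It orders all inward fork vertices first, then the $2g$ cycle vertices, then the $L_i$, then the $R_i$, then the outward fork vertices. With your choice of outward tails, put the cycle layer between the $R_i$ and the outward fork vertices instead; this is the mirror image. Every relation of the partial order either goes from an earlier layer to a later one or lies inside a single cycle chain. Hence any order within each of the four antichain layers, combined with any shuffle of the $m$ mutually incomparable chains, extends the partial order. The counts therefore multiply with nothing further to verify, giving at least $(m!)^4(2g)!/\bigl((2\lfloor g/m\rfloor+2)!\bigr)^m$ total orders. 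Distinct orders give non-isomorphic covers for the following reason. An automorphism of $C$ fixes the unique inward end not lying in a symmetric fork. It then fixes the whole weight-$1$ string, because at each string vertex the tail edge is the only edge of weight $2$. So it fixes every inner vertex.
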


\begin{proof}
We first construct a zigzag cover $\varphi:C\to T\pb^1$ of type $(g,(1^{2m+1}),(1^{2m+1}),\undl z)$ in Figure \ref{fig:zig1},
which is defined as \textit{unmixed} according to \cite[Definition 5.5]{rau2019}.
\begin{figure}[ht]
    \centering
    \begin{tikzpicture}
    \draw[line width=0.4mm] (-1,-2)--(7,-2.5);
    \draw[line width=0.4mm] (7,-2.5)--(4,-3)--(5,-3.5)--(4,-4);
    \draw[line width=0.4mm,dotted] (4,-4)--(3,-4.5);
    \draw[line width=0.4mm] (3,-4.5)--(2,-5)--(5.5,-5.5)--(3.5,-6)--(11,-6.5);
\draw[line width=0.3mm,gray] (10.3,-2.5)--(11,-2.2);
\draw[line width=0.3mm,gray] (7,-2.5)--(10.3,-2.5)--(11,-2.8);
\draw[line width=0.3mm,gray] (4,-3)--(1.8,-3);
\draw[line width=0.3mm,gray] (1.2,-3)--(-0.5,-3)--(-1,-2.7);
\draw[line width=0.3mm,gray] (-0.5,-3)--(-1,-3.3);
\draw[line width=0.3mm,gray] (5,-3.5)--(10.5,-3.5)--(11,-3);
\draw[line width=0.3mm,gray] (10.5,-3.5)--(11,-4);
\draw[line width=0.3mm,gray] (2,-5)--(1.5,-5);
\draw[line width=0.3mm,gray] (0.9,-5)--(-0.8,-5)--(-1,-4.5);
\draw[line width=0.3mm,gray] (-0.8,-5)--(-1,-5.5);
\draw[line width=0.3mm,gray] (5.5,-5.5)--(10,-5.5)--(11,-5);
\draw[line width=0.3mm,gray] (10,-5.5)--(11,-6);
\draw[line width=0.3mm,gray] (3.5,-6)--(-0.3,-6)--(-1,-5.8);
\draw[line width=0.3mm,gray] (-0.3,-6)--(-1,-6.2);
\draw[line width=0.3mm,dotted] (5.4,-3.8)--(5.4,-5.3);
\draw[line width=0.2mm,dotted] (0.5,-4.7)--(0.5,-3.2);
\draw[line width=0.2mm,gray] (-1.2,-3)--(-1.3,-3)--(-1.3,-6)--(-1.2,-6);
\draw[line width=0.2mm,blue] (-1.2,-5) node{\tiny $m$ tails};
\draw[line width=0.2mm,gray] (11.2,-2.5)--(11.3,-2.5)--(11.3,-5.5)--(11.2,-5.5);
\draw[line width=0.2mm,blue] (11.2,-4.5) node{\tiny $m$ tails};
\draw[line width=0.3mm] (-1,-7)--(11,-7);
    \foreach \Point in {(-0.3,-7),(-0.5,-7),(-0.8,-7),(0.9,-7),(1.2,-7),(1.5,-7),(1.8,-7),(2,-7),(3.5,-7),(4,-7),(5,-7),(5.5,-7),(7,-7),(10,-7),(10.3,-7),(10.5,-7)}
    \draw[fill=black] \Point circle (0.03);
\draw (1.5,-3) ellipse (0.3 and 0.17);
\draw (1.2,-5) ellipse (0.3 and 0.17);
\draw[line width=0.3mm] (11.2,-2.2) node{\tiny $1$} (11.2,-2.8) node{\tiny $1$} (11.2,-3) node{\tiny $1$} (11.2,-4) node{\tiny $1$} (11.2,-5) node{\tiny $1$} (11.2,-6) node{\tiny $1$}(11.2,-6.5) node{\tiny $1$} (-1.2,-2) node{\tiny $1$}  (-1.2,-2.7) node{\tiny $1$} (-1.2,-3.3) node{\tiny $1$} (-1.2,-4.5) node{\tiny $1$} (-1.2,-5.5) node{\tiny $1$} (-1.2,-5.8) node{\tiny $1$} (-1.2,-6.2) node{\tiny $1$};
    \end{tikzpicture}
    \caption{\small zigzag cover of type $(g,(1^{2m+1}),(1^{2m+1}),\undl z)$.}
    \label{fig:zig1}
\end{figure}
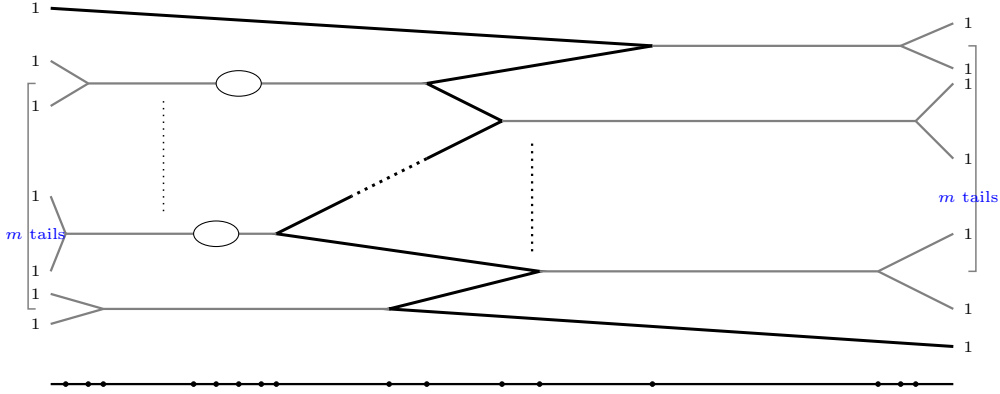
We now describe the characteristics of this unmixed zigzag cover $\varphi:C\to T\pb^1$ depicted in Figure \ref{fig:zig1}.
The string $S\subset C$ is depicted in black, with each edge in $S$ having a weight of $1$.
All symmetric cycles are positioned on inward tails.
There are $m$ inward tails and $m$ outward tails, both of type $1,1$.
There are $m$ bends in $S$ with their peaks pointing to the left, and $m$ bends in $S$ with their peaks pointing to the right.
Suppose that the elements of $\undl z\subset\rb$ satisfy $z_1<\cdots<z_{4m+2g}$.
The inner vertices of $C$ are mapped to $\undl z$ as follows:
\begin{itemize}
\item $m$ inward (resp. outward) symmetric fork vertices are mapped onto the set $\{z_1,\ldots,z_m\}$
(resp. $\{z_{3m+2g+1},\ldots,z_{4m+2g}\}$);
\item $m$ bends with peaks pointing to the left (resp. right) are mapped onto the set $\{z_{m+2g+1},\ldots,z_{2m+2g}\}$
(resp. $\{z_{2m+2g+1},\ldots,z_{3m+2g}\}$);
\item the vertices of symmetric cycles are mapped onto the set $\{z_{m+1},\ldots,z_{m+2g}\}$.
\end{itemize}
By division algorithm, there exists a unique integer $r$ such that
$g=\left\lfloor\frac{g}{m}\right\rfloor m+r$ and $0\leq r<m$.
We select $r$ inward tails and place $\left\lfloor\frac{g}{m}\right\rfloor+1$ symmetric cycles on each of these inward tails.
On the remaining $m-r$ inward tails, we place $\left\lfloor\frac{g}{m}\right\rfloor$ symmetric cycles on each.

From Remark \ref{rmk-combin1}, the tropical curve $C$ is oriented and weighted.
The orientation $\ol$ on $C$ induces a partial order $\pl$ on the inner vertices of $C$.
Furthermore, any total order on the inner vertices of $C$ that extends this partial order $\pl$
determines a unique tropical cover according to \cite[Remark 5.3]{rau2019}.
To find a lower bound for the number of tropical covers with the same source curve $C$,
we only need to determine a lower bound for the number of total orders on the inner vertices of $C$ that extend the partial order $\pl$.
The partial order $\pl$ restricts to the empty order on each of the four sets $W_1=\varphi^{-1}(\{z_1,\ldots,z_m\})$,
$W_2=\varphi^{-1}(\{z_{m+2g+1},\ldots,z_{2m+2g}\})$, 
$W_3=\varphi^{-1}(\{z_{2m+2g+1},\ldots,z_{3m+2g}\})$ and 
$W_4=\varphi^{-1}(\{z_{3m+2g+1},\ldots,z_{4m+2g}\})$.
Hence, any permutation of the elements within one of these  four sets provides a valid total order on $\cup_{i=1}^4W_i$ 
(as proved in \cite[Lemma 5.6]{rau2019}).
Let $V_i$ be the set of symmetric cycle vertices located on a tail, where $i=1,\ldots, \min(g,m)$.
When the partial order $\pl$ restricts to $V_i$, it becomes a total order.
Let $v_i\in V_i$ be a vertex in $V_i$, $i=1,\ldots, \min(g,m)$.
The partial order $\pl$ restricts to the empty order on $\{v_i,v_j\}$ if $i\neq j$.
We suppose that $V_i$ for $i=1,\ldots,r$ contains $2\left\lfloor\frac{g}{m}\right\rfloor+2$ vertices,
and $V_j$ for $j=r+1,\ldots,m$ contains $2\left\lfloor\frac{g}{m}\right\rfloor$ vertices.
The vertices of $V_1$ can be arranged in $\binom{2g}{2\left\lfloor\frac{g}{m}\right\rfloor+2}$ ways within the sequence $1,\ldots,2g$.
Once the vertices of $V_1$ are placed, the vertices of $V_2$ can be arranged in $\binom{2g-2\left\lfloor\frac{g}{m}\right\rfloor-2}{2\left\lfloor\frac{g}{m}\right\rfloor+2}$ ways,
and so on for the remaining sets $V_i$.
We get at least
$$
\begin{aligned}
&\binom{2g}{2\left\lfloor\frac{g}{m}\right\rfloor+2}\cdots\binom{2g-2(r-1)\left\lfloor\frac{g}{m}\right\rfloor-2(r-1)}{2\left\lfloor\frac{g}{m}\right\rfloor+2}
\cdot\binom{2g-2r\left\lfloor\frac{g}{m}\right\rfloor-2r}{2\left\lfloor\frac{g}{m}\right\rfloor}\cdots
\binom{2\left\lfloor\frac{g}{m}\right\rfloor}{2\left\lfloor\frac{g}{m}\right\rfloor}\\
&=\frac{(2g)!}{\left((2\left\lfloor\frac{g}{m}\right\rfloor+2)!\right)^{r}\cdot\left((2\left\lfloor\frac{g}{m}\right\rfloor)!\right)^{m-r}}
\end{aligned}
$$
total orders on $W_5=\varphi^{-1}(\undl z)\setminus(\cup_{i=1}^4W_i)$.
Hence, there are at least
$\frac{(2g)!}{\left((2\left\lfloor\frac{g}{m}\right\rfloor+2)!\right)^{m}}\cdot(m!)^4$
total orders on inner vertices of $C$.
\end{proof}

\begin{theorem}
\label{thm:zigzag1}
Let $\lambda$ and $\mu$ be two partitions such that $|\lambda|=|\mu|$ and $l(\lambda_o,\mu_o)\leq2$.
Then, there exists a fixed integer $N_0$ that depends on $\lambda$ and $\mu$ such that, for any $m>N_0$, we have
$$
Z_g((\lambda,1^{2m}),(\mu,1^{2m});\Lambda_{0,r_0})\geq Z_g((1^{2m-2N_0+1}),(1^{2m-2N_0+1});\Lambda_{0,r_1}),
$$
where $r_0=l(\lambda)+l(\mu)+2g-2+4m$, $r_1=4m-4N_0+2g$.
\end{theorem}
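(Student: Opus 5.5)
We mimic the proof of Theorem \ref{thm:asymp1}, now with $s=0$ and simple branch points only, and construct an injective map from zigzag covers of type $(g,(1^{2m-2N_0+1}),(1^{2m-2N_0+1}),\undl z)$ into generalized zigzag covers of type $(g,(\lambda,1^{2m}),(\mu,1^{2m}),\undl z')$. The map is given by gluing along ends of weight $1$ via Lemma \ref{lem:glue1}. Since $s=0$, every generalized zigzag cover has $m(\varphi)=0$ (Remark \ref{rmk:mult-parity}), so $Z_g$ just counts isomorphism classes. An injection therefore gives the inequality.

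\textbf{Step 1: a fixed genus-zero core.} The hypothesis $l(\lambda_o,\mu_o)\leq 2$ means at most two odd entries occur an odd number of times in total. I construct a genus $0$ zigzag cover $\varphi_1$ of type $(0,(\lambda,1^{2a}),(\mu,1^{2a}),\undl z_1)$ for a fixed $a$ depending only on $\lambda,\mu$, in the spirit of Theorem \ref{thm:non-vanish-UEC}. Its structure is as follows.
\begin{itemize}
\item Entries of $\lambda_{o,o},\lambda_{e,e},\mu_{o,o},\mu_{e,e}$ become tails with symmetric forks (types one and three in Figure \ref{fig:gzc-tail}).
\item Each even entry of $\lambda_e,\mu_e$ becomes a tail of the fourth type, a single $2e$ end.
\item The at most two odd singletons, together with enough $1$'s from $1^{2a}$, form a single string $S_1$ of odd weights.
\end{itemize}
Padding with $1$'s on both sides handles the cases $l(\lambda_o,\mu_o)=0$ or $1$, and also the parity mismatch. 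The key feature I need is that $S_1$ contains an outward end $l_1$ of weight $1$ that is not in a symmetric fork. The string is oriented so that its last outgoing end has weight $1$, with balancing kept by the tails attaching to it. Unlike Theorem \ref{thm:non-vanish-UEC}, even ends are allowed here, because Definition \ref{def:gen-zig-cov} permits the $2e$ tail.

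\textbf{Step 2: gluing and bookkeeping.} Let $\varphi_3$ be any zigzag cover of type $(g,(1^{2m-2N_0+1}),(1^{2m-2N_0+1}),\undl z_3)$. All its ends have weight $1$, so its subgraph $S_3$ contains an inward end $l_3'$ of weight $1$ that is not in a symmetric fork; if it did not, every end would lie in a fork and $S_3$ would be degenerate. Lemma \ref{lem:glue1} then glues $\varphi_1$ to $\varphi_3$ along $l_1,l_3'$ and produces a generalized zigzag cover of genus $g$ with the following data.
\begin{itemize}
\item Profile over $-\infty$: $(\lambda,1^{2a+2m-2N_0+1})$.
\item Profile over $+\infty$: $(\mu,1^{2a-1+2m-2N_0+1})$.
\end{itemize}
I choose $N_0=a$ to match $(\lambda,1^{2m})$ on the left, and fix the parity on the right by choosing the core appropriately: the core should be of type $(\lambda,1^{2a-1})\to(\mu,1^{2a})$, or of the symmetric type. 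This requires $l(\lambda)\equiv l(\mu)$ up to the correct shift, which follows from the fact that $|\lambda|=|\mu|$ forces the parity of $l(\lambda_o)+l(\mu_o)$. I expect this index bookkeeping to be routine.

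Next I check that the counts of branch points agree: $r_0=l(\lambda)+l(\mu)+2g-2+4m$ must equal $(\#\text{vertices of }\varphi_1)+r_1$. This follows from additivity of Euler characteristics under gluing one edge. Injectivity follows from the last paragraph of the proof of Lemma \ref{lem:glue1}, with $\varphi_1$ held fixed: distinct $\varphi_3$ yield non-isomorphic glued covers.

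\textbf{Main obstacle.} The delicate step is Step 1: building the core with an outward weight-$1$ end that is not in a fork, uniformly in all cases of $l(\lambda_o,\mu_o)\in\{0,1,2\}$, while keeping every weight positive. My first attempt will adapt the inequality argument of Theorem \ref{thm:non-vanish-UEC}, with the single string run from the larger odd entry down through successive attached tails. Positivity of the outgoing weights is ensured by placing all inward tails before the outward ones and choosing $a$ large relative to $|\lambda|$.
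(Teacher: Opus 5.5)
Your Step~1 asks for a core that does not exist in general, and this is where the paper needs an extra idea. Since $s=0$ there are no contractible edges, so $S$ consists of odd edges only. Balancing at a trivalent vertex forces an even number of odd edges there. A tail meets $S$ only through its even base edge, so the odd edges at a vertex of $S$ lie in $S$. Hence $S$ is a path (or a cycle, or a point) and carries at most two ends, while every odd end off $S$ lies in a symmetric fork of a tail. Because $l(\lambda_o)+l(\mu_o)\equiv|\lambda|+|\mu|\equiv 0 \pmod 2$, the hypothesis means $l(\lambda_o,\mu_o)\in\{0,2\}$. Padding both sides by $1^{2a}$ does not change which odd entries occur an odd number of times. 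So when $l(\lambda_o,\mu_o)=2$, the two ends of the string are forced to be the two odd singletons, and all added $1$'s sit in $(1,1)$-forks; they cannot be put ``into the string''. For example, for $\lambda=\mu=(3)$, every zigzag cover with only simple branch points of type $((3,1^{2a}),(3,1^{2a}))$ has its string running from the inward $3$-end to the outward $3$-end. There is then no outward weight-$1$ end outside a fork, and Lemma~\ref{lem:glue1} cannot attach $\varphi_3$. Your core exists only when $l(\lambda_o,\mu_o)=0$ (a string with two outward $1$-ends) or when $1\in\mu_o$.

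The missing idea, which the paper uses, is to leave the string ends of the core alone and insert converters. Take Rau's genus-$0$ zigzag cover of type $((\lambda,1,1),(\mu,1,1))$ whose string ends are, say, the inward $\lambda_o$-end and the outward $\mu_o$-end. Take zigzag covers of types $(0,(\mu_o),(1^{\mu_o}))$ and $(0,(1^{\mu_o}),(\mu_o))$, whose strings run from $\mu_o$ to $1$ and from $1$ to $\mu_o$. Glue the core, the first converter, $\varphi_3$, and the second converter in turn by Lemma~\ref{lem:glue1}. This gives $N_0=(\mu_o+1)/2$, or $N_0=1$ if $\mu_o=1$; when both singletons are inward, one runs the mirror-image chain on the inward side.

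Separately, your Step~2 bookkeeping is off. Gluing uses up an inward $1$-end of $\varphi_3$, so the profile over $-\infty$ is $(\lambda,1^{2a+2m-2N_0})$, which already equals $(\lambda,1^{2m})$ for $N_0=a$. There is no parity mismatch to fix, and your proposed replacement core $(\lambda,1^{2a-1})\to(\mu,1^{2a})$ does not even have equal degrees.
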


\begin{proof}
From \cite[Proposition 5.2]{rau2019}, there exists a zigzag cover $\varphi_1:C_1\to T\pb^1$
of type $(0,(\lambda,1,1), (\mu,1,1);\undl x_1)$,
where $\undl x_1$ is a set of $l(\lambda)+l(\mu)+2$ simple branch points.
According to the construction detailed in the proof of \cite[Proposition 5.2]{rau2019}, we have the following cases.
\begin{enumerate}[$(1)$]
\item $l(\lambda_o)=l(\mu_o)=1$: The string $S_1\subset C_1$ has an inward end $l_1$ weighted by $\lambda_o$
and an outward end $l_1'$ weighted by $\mu_o$, both of which are not part of any symmetric fork.
In the case $\mu_o\neq1$, we construct the following three types of zigzag covers.
\begin{itemize}
\item According to \cite[Proposition 5.2]{rau2019},
we have at least one zigzag cover $\varphi_2$ of type $(0,\mu_o,(1^{\mu_o}),\undl x_2)$,
where the string has an inward end $l_2$ weighted by $\mu_o$ and an outward end $l_2'$ weighted by $1$,
both of which are not part of any symmetric fork.
Let $N_0=\frac{\mu_o+1}{2}$.
\item From Lemma \ref{lem:asymp2}, there exist $Z_g((1^{2m-2N_0+1}),(1^{2m-2N_0+1});\Lambda_{0,r_1})$
zigzag covers $\varphi_3$ of type $(g,(1^{2m-2N_0+1}),(1^{2m-2N_0+1}),\undl x_3)$.
Furthermore, the string of every zigzag cover has an inward end $l_3$ and an outward end $l_3'$,
both weighted by 1 and not part of any symmetric fork.
\item From \cite[Proposition 5.2]{rau2019}, we also have at least one zigzag cover $\varphi_4$ of type $(0,(1^{\mu_o}),\mu_o,\undl x_4)$
whose string has an inward end $l_4$ weighted by $1$ and an outward end $l_4'$ weighted by $\mu_o$,
both of which are not part of any symmetric fork.
\end{itemize}
Let $\varphi_1'$ be the zigzag cover obtained by gluing $\varphi_1$ with $\varphi_2$ along $l_1'$ and $l_2$, according to Lemma \ref{lem:glue1}.
Denote by $\varphi_2'$ the zigzag cover obtained by gluing $\varphi_1'$ with $\varphi_3$ along $l_2'$ and $l_3$.
At last, we glue $\varphi_2'$ with $\varphi_4$ along $l_3'$ and $l_4$.
By applying Lemma \ref{lem:glue1}, we obtain $Z_g((1^{2m-2N_0+1}),(1^{2m-2N_0+1});\Lambda_{0,r_1})$
zigzag covers of type $(g,(\lambda,1^{2m}),(\mu,1^{2m}),\undl x)$.
In the case $\mu_o=1$, let $N_0=1$. We then glue $\varphi_1$ with zigzag cover $\varphi_3$ along $l_1'$ and $l_3$,
which results in the required inequality.
\item $l(\mu_o)=2$: Suppose that $\mu_o=(\mu_o^1,\mu_o^2)$. The string $S_1\subset C_1$ has two outward ends $E_1$, $E_2$ 
which are weighted by $\mu_o^1$ and $\mu_o^2$, respectively, and are not part of any symmetric fork. 
In this case, the required inequality can be obtained using an argument similar to that of case $(1)$, and thus we omit the details here.
\item $l(\lambda_o,\mu_o)=0$: The string $S_1\subset C_1$ has two outward ends $E_1$, $E_2$ which are both weighted by $1\in(\mu,1,1)$
and not part of any symmetric fork.
An argument similar to that of the case $(1)$ implies the required inequality, so we omit the details here.
\item $l(\lambda_o)=2$: Suppose that $\lambda_o=(\lambda_o^1,\lambda_o^2)$. 
The string $S_1\subset C_1$ has two inward ends $E_1$, $E_2$ which are weighted by $\lambda_o^1$ and $\lambda_o^2$, respectively,
and are not part of any symmetric fork.
An argument similar to that of the case $(1)$ implies the required inequality, so we omit the details here.
\end{enumerate}
\end{proof}

\begin{proof}[Proof of Theorem $\ref{thm:main2}$]
From Theorem \ref{thm:zigzag1}, there exists a fixed integer $N_0$ depending on $\lambda$ and $\mu$ such that
for any $m>N_0$, we have
$$
z_{\lambda,\mu}(g,m)\geq Z_g((1^{2m-2N_0+1}),(1^{2m-2N_0+1});\Lambda_{0,t}),
$$
where $t=4m-4N_0+2g$.  
From Lemma \ref{lem:asymp2} and Lemma \ref{lem:limit1}, we obtain that
$$
\liminf_{g,m\to\infty}\frac{\ln z_{\lambda,\mu}(g,m)}{2g\ln m+4m\ln m}
\geq1.
$$
%
%
%
\end{proof}

\begin{proof}[Proof of Theorem $\ref{coro:main3}(2)$]
From equation $(3)$ in \cite{dyz-2017} we know
$H_g^\cb(1^{d},1^d)\sim\frac{2}{d!^2}\binom{d}{2}^{2g+2d-2}$ as $g\to\infty$,
so $\ln H_g^\cb(1^{d},1^d)\sim2g\ln\frac{d(d-1)}{2}$ when $g$ tends to infinity.
It follows from the proof of \cite[Theorem 5.10]{rau2019} that
$H_g^\cb((\lambda,1^{2m}),(\mu,1^{2m}))\leq H_g^\cb(1^{|\lambda|+2m},1^{|\mu|+2m})$.
From Lemma \ref{lem:asymp2}, Lemma \ref{lem:limit1}(1) and Theorem \ref{thm:zigzag1}, we obtain  
$$
\liminf_{g\to\infty}\frac{\ln z_{\lambda,\mu}(g,m)}{\ln h^\cb_{\lambda,\mu}(g,m)}\geq \frac{\ln (m-N_0)}{\ln(\frac{(|\lambda|+2m)(|\lambda|+2m-1)}{2})}.
$$
\end{proof}

\subsection{Uniform asymptotics for real double Hurwitz numbers with triple ramification}

Let $\lambda$ and $\mu$ be two partitions such that $|\lambda|=|\mu|$.
Suppose that there exists an odd number $o\neq1$ that appears at least twice in $\lambda$.
Let $2c(\lambda,\mu)=\big||\lambda_e|+2o-|\mu_e|\big|$,
$\lambda'=\lambda\setminus(\lambda_e,o,o)$ and $\mu'=\mu\setminus\mu_e$.
We put
$$
\begin{aligned}
z_{\lambda,\mu}(g,h,m)=Z_g((\lambda,1^{2c+h+2m-1}),(\mu,1^{2c+h+2m-1});s(h),t(m)),\\
h_{\lambda,\mu}(g,h,m)=H^\cb_g((\lambda,1^{2c+h+2m-1}),(\mu,1^{2c+h+2m-1});s(h),t(m)),
\end{aligned}
$$
where $s(h)=\frac{l(\lambda')+l(\mu')}{2}+c+h$, $t(m)=l(\lambda_e)+l(\mu_e)+2g-2+4m+2c$.

\begin{theorem}
\label{thm:asymp2}
Let $g\geq0$ be an integer,
and let $\lambda$ and $\mu$ be two partitions such that $|\lambda|=|\mu|$.
Suppose that there exists an odd integer $o\neq1$ that appears at least twice in $\lambda$
and there exists an even integer $e\geq2o$ in $\mu$.
Then, there exists a fixed integer $n_0$ that depends on $\lambda$ and $\mu$ such that
for any $h>n_0+3$ and $m>1$, we have
$$
z_{\lambda,\mu}(g,h,m)\geq |\zl_0((1^{h-n_0}),(1^{h-n_0});\Lambda_{h-n_0-1,0})|\cdot Z_g((1^{2m-1}),(1^{2m-1});\Lambda_{0,4m+2g-4}).
$$
\end{theorem}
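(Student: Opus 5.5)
We will prove the inequality by constructing, for each pair consisting of a cover counted by $|\zl_0((1^{h-n_0}),(1^{h-n_0});\Lambda_{h-n_0-1,0})|$ and a zigzag cover counted by $Z_g((1^{2m-1}),(1^{2m-1});\Lambda_{0,4m+2g-4})$, a distinct properly mixed generalized zigzag cover of type $(g,(\lambda,1^{2c+h+2m-1}),(\mu,1^{2c+h+2m-1}),\undl x\sqcup\undl y)$ with $\undl x\sqcup\undl y$ a trivial $(s(h),t(m))$-distribution. Injectivity will come from Lemma \ref{lem:glue1}. The cover splits along the characteristic edge $E_c$ into a \emph{triple part}, whose vertices map to $\undl x$, and a \emph{simple part}, whose vertices map to $\undl y$. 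The triple part carries the factor $|\zl_0(\cdots)|$, and the simple part carries the zigzag factor.

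\emph{Triple part.} Put $\lambda'=\lambda\setminus(\lambda_e,o,o)$ and $\mu'=\mu\setminus\mu_e$; both have no unpaired even entries. We apply Theorem \ref{thm:asymp1} (genus $0$) to $\lambda'$ and $\mu'$, padded with appropriately many $1$'s. Its proof glues a Theorem \ref{thm:non-vanish-UEC} cover with $\varphi_2,\varphi_3,\varphi_4$, so we obtain at least $|\zl_0((1^{h-n_0}),(1^{h-n_0});\Lambda_{h-n_0-1,0})|$ generalized zigzag covers using only pairs. By the remark at the end of that proof, each has a free inward end of weight $1$ in $S$ not lying in a symmetric fork. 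We then adjoin, in first position of the total order, the pair $x_1=(v_1',v_1)$ of Figure \ref{fig:pair-EC}(\textrm{xii}): two inward ends of weight $o$ (the two copies of $o$ in $\lambda$) together with an even contractible edge $e_1'$. The outgoing odd edge of $v_1$ is $E_c$, of odd weight $o_2+e_1'$. The even entries of $\lambda_e$ must also be absorbed. We will attach these, and the surplus $2c=\big||\lambda_e|+2o-|\mu_e|\big|$, on the simple side. The hypothesis $e\geq 2o$ in $\mu$ is what lets the even end $e\in\mu_e$ be produced from $E_c$ together with odd strands, so that $E_c$ can be odd with the correct weight. The constant $n_0$ absorbs all the bounded bookkeeping: the $1$'s spent here and the degree of the gadget.

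\emph{Simple part.} The vertex $v_2$ receives $E_c$ and one further odd incoming edge, and it maps to the last point $w_{2s+t}$. So the simple part must be built with $v_2$ as a \emph{maximal} vertex for its orientation; this is Definition \ref{def:ref-zig-cov}(4). I would build it by taking a Lemma \ref{lem:asymp2}-type zigzag cover of type $(g,(1^{2m-1}),(1^{2m-1}))$, which has free weight-$1$ inward and outward ends on its string. Its inward end of weight $1$ is fed from the triple part through $E_c$, or through a bounded gadget in the style of \cite[Proposition 5.2]{rau2019} that converts $E_c$ and the ends of $\lambda_e,\mu_e$ into weight-$1$ strands. These fixed gadgets use the extra $2c$ ones and the $l(\lambda_e)+l(\mu_e)+2c$ simple points. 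Each gluing is an instance of Lemma \ref{lem:glue1} or a direct application of \cite[Remark 5.3]{rau2019}. Because all triple-part vertices precede all simple-part vertices in the chosen total order, $\undl z$ is trivial, condition (1) holds, and the counts multiply, giving the stated product.

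The main obstacle is making conditions (1)--(4) of Definition \ref{def:ref-zig-cov} compatible with the gluing. The difficulty is arranging that $E_c$ is a bridge whose lower endpoint $v_2$ is simultaneously the \emph{last} vertex overall, while the simple part is glued to the triple part only through $E_c$. Lemma \ref{lem:glue1} glues a last-outgoing end to a first-incoming end, which is the opposite of what we need here. I would first try reversing the roles: construct the simple part so that $v_2$ is the unique sink vertex, collecting $E_c$ and the final odd strand, and glue the zigzag cover onto the other incoming side before $v_2$. If ordering forces trouble, I would reflect the zigzag cover horizontally, which preserves the zigzag count, so that its free end points into $v_2$. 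Degree and weight balancing, together with verifying that $E_c$ stays odd, are then routine parity checks that use $e\geq 2o$.
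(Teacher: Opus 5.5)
The overall shape is right. The triple part is the cover from Theorem \ref{thm:asymp1}, preceded by the pair of Figure \ref{fig:pair-EC}(\textrm{xii}) whose odd outgoing edge of weight $1$ is glued into the free inward weight-$1$ end; this forces $e_1'=o-1$ and gives $E_c$ weight $2o-1$. You also correctly locate where $e\geq 2o$ enters.

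\textbf{The gap.} The gap is the simple part, the only new construction the proof needs, which you never build; the version you do state cannot work. Since $v_2$ maps to the largest point $w_{2s+t}$ and already has two odd incoming edges, its only outgoing edge must be an end of even weight $\geq 2o$, namely the end $e$. So nothing lies downstream of $E_c$. It can neither feed the zigzag cover's inward end nor be ``converted'' by a gadget into weight-$1$ strands. Feeding the zigzag cover from the triple part along any other edge would instead break the bridge condition in Definition \ref{def:ref-zig-cov}(1).

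\textbf{How to close it.} The zigzag cover must sit upstream of $v_2$, on its second odd incoming strand. This is your fallback idea, and it is realized by an explicit gadget of type $(0,(\lambda_e,2o-1,1),(\mu_e,1^{2c}))$ (Figure \ref{fig:proper-zig2}, case $|\lambda_e|+2o-|\mu_e|=2c\geq 0$). The gadget is an odd string that:
\begin{enumerate}
\item starts at an inward weight-$1$ end;
\item first takes in the even ends of $\lambda_e$;
\item then sheds the ends of $\mu_e\setminus\{e\}$ and $c$ symmetric $(1,1)$-forks, so all weights stay positive;
\item reaches the last vertex $v$ with weight $e-2o+1\geq 1$, meets there an inward end of weight $2o-1$, and leaves as the end $e$.
\end{enumerate}
Glue a free outward weight-$1$ end of each zigzag cover of type $(g,(1^{2m}),(1^{2m}))$ into the gadget's weight-$1$ end. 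By Theorem \ref{thm:zigzag1} with $N_0=1$ there are at least $Z_g((1^{2m-1}),(1^{2m-1});\Lambda_{0,4m+2g-4})$ such covers.

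\textbf{Lemma \ref{lem:glue1} is not an obstacle.} The difficulty you single out comes from misreading the lemma. It does not require $l_1$ to sit at the last vertex of $C_1$ or $l_2$ at the first vertex of $C_2$. For any outward end $l_1\subset S_1$ and inward end $l_2\subset S_2$ outside symmetric forks, it glues them and orders all vertices of $C_1$ before all vertices of $C_2$. So apply it once more: glue the triple part's outward $(2o-1)$-end, attached to $v_1$ (the very first pair), to the simple part's inward $(2o-1)$-end, attached to $v$ (the very last vertex). This produces $E_c$ with all required properties at once:
\begin{enumerate}
\item $E_c$ is a bridge;
\item every pair precedes every simple point, so the distribution is trivial;
\item conditions (2)--(4) of Definition \ref{def:ref-zig-cov} hold by construction;
\item the counts multiply as in the lemma.
\end{enumerate}
No reversal of roles and no reflection of the zigzag cover is needed.
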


\begin{proof}
Let $\lambda'=\lambda\setminus(\lambda_e,(o,o))$ and $\mu'=\mu\setminus\mu_e$,
then $l(\lambda'_e)=l(\mu'_e)=0$.
Suppose that $||\lambda_e|+2o-|\mu_e||=2c$.
There are three cases: $|\lambda_e|+2o-|\mu_e|=2c>0$, $|\lambda_e|+2o-|\mu_e|=-2c<0$
or $|\lambda_e|+2o-|\mu_e|=0$.
We prove Theorem \ref{thm:asymp2} by considering each case separately.

Case $(1)$: $|\lambda_e|+2o-|\mu_e|=2c>0$.
We consider three pairs of partitions:
\begin{itemize}
\item $(\lambda',1^{2c})$ and $\mu'$;
\item $(o,o)$ and $(2o-1,1)$;
\item $(\lambda_e,2o-1,1^{2m})$ and $(\mu_e,1^{2m+2c-1})$.
\end{itemize}
Let $n_0$ be the integer obtained from Theorem \ref{thm:asymp1} for the partitions $(\lambda',1^{2c})$ and $\mu'$.
Let $h$ be a positive integer satisfying $h>n_0+3$.
In the following, we construct three types of tropical covers.
\begin{enumerate}[$(a)$]
\item Let $\varphi_1:C_1\to T\pb^1$
be a generalized zigzag cover of type $(0,(\lambda',1^{2c+h}), (\mu',1^{h}),\undl z_1)$
obtained from Theorem \ref{thm:asymp1}.
Let $l_1$ be a weighted 1 inward end that is not part of any symmetric fork in the non-empty subgraph $S_1\subset C_1$.
The existence of such an end is guaranteed by the proof of Theorem \ref{thm:asymp1}
(see also the construction in Lemma \ref{lem:asymp1}).
From Theorem \ref{thm:asymp1}, we have at least $|\zl_0((1^{h-n_0}),(1^{h-n_0});\Lambda_{h-n_0-1,0})|$
generalized zigzag covers of type $(0,(\lambda',1^{2c+h}), (\mu',1^{h}),\undl z_1)$.
\item A generalized zigzag cover $\varphi_2:C_2\to T\pb^1$ of type $(0,(o,o), (2o-1,1),\undl z_2)$
is given in Figure \ref{fig:proper-zig1}.
\begin{figure}[ht]
    \centering
    \begin{tikzpicture}
    \draw[line width=0.3mm] (-1,0.8)--(0,0.5)--(1,0.8);
    \draw[line width=0.3mm] (0,0.5)--(0.6,0);
    \draw[line width=0.3mm] (-1,-0.4)--(0.6,0)--(1,-0.2);
    \draw[line width=0.3mm] (-1,-1)--(1,-1);
    \foreach \Point in {(0,-1),(0.6,-1)}
    \draw[fill=black] \Point circle (0.03);
    \draw[line width=0.3mm] (-0.8,0.9) node{\tiny $o$} (-0.8,-0.2)
    node{\tiny $o$} (-0.1,0.25)  node{\tiny $o-1$} (1,-0.4) node{\tiny $2o-1$} (0.9,1) node{\tiny $1$};
    \end{tikzpicture}
    \caption{\small Tropical cover $\varphi_2:C_2\to T\pb^1$ .}
    \label{fig:proper-zig1}
\end{figure}
The weight of every edge of $C_2$ is marked in Figure \ref{fig:proper-zig1}.
The orientation on $C_2$ is induced by the covering map $\varphi_2$,
so the weight $1$ end and weight $2o-1$ end are oriented outward.
\item We construct zigzag covers $\varphi_3:C_3\to T\pb^1$ of type
$(g,(\lambda_e,2o-1,1^{2m}), (\mu_e,1^{2m+2c-1}),\undl z_3)$ here.
Let $\varphi_3':C_3'\to T\pb^1$ be the zigzag cover of type
$(0,(\lambda_e,2o-1,1), (\mu_e,1^{2c}),\undl z_3')$ given in Figure \ref{fig:proper-zig2}.
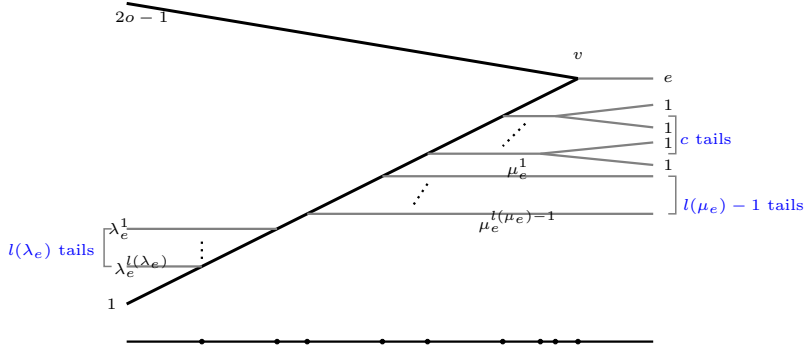
\begin{figure}[ht]
    \centering
    \begin{tikzpicture}
    \draw[line width=0.4mm] (4,1)--(10,0);
    \draw[line width=0.4mm] (10,0)--(4,-3);
\draw[line width=0.3mm,gray] (10,0)--(11,0);
\draw[line width=0.3mm,gray] (9,-0.5)--(9.7,-0.5)--(11,-0.35);
\draw[line width=0.3mm,gray] (9.7,-0.5)--(11,-0.65);
\draw[line width=0.3mm,gray] (8,-1)--(9.5,-1)--(11,-0.85);
\draw[line width=0.3mm,gray] (9.5,-1)--(11,-1.15);
\draw[line width=0.3mm,gray] (7.4,-1.3)--(11,-1.3);
\draw[line width=0.3mm,gray] (6.4,-1.8)--(11,-1.8);
\draw[line width=0.3mm,gray] (5,-2.5)--(4,-2.5);
\draw[line width=0.3mm,gray] (6,-2)--(4,-2);
\draw[line width=0.3mm,dotted] (9.3,-0.6)--(9,-0.9);
\draw[line width=0.3mm,dotted] (8,-1.4)--(7.8,-1.7);
\draw[line width=0.3mm,dotted] (5,-2.4)--(5,-2.1);
\draw[line width=0.2mm,gray] (11.2,-0.5)--(11.3,-0.5)--(11.3,-1)--(11.2,-1);
\draw[line width=0.2mm,blue] (11.7,-0.8) node{\tiny $c$ tails};
\draw[line width=0.2mm,gray] (11.2,-1.3)--(11.3,-1.3)--(11.3,-1.8)--(11.2,-1.8);
\draw[line width=0.2mm,blue] (12.2,-1.7) node{\tiny $l(\mu_e)-1$ tails};
\draw[line width=0.2mm,gray] (3.8,-2)--(3.7,-2)--(3.7,-2.5)--(3.8,-2.5);
\draw[line width=0.2mm,blue] (3,-2.3) node{\tiny $l(\lambda_e)$ tails};
\draw[line width=0.3mm] (4,-3.5)--(11,-3.5);
    \foreach \Point in {(5,-3.5),(6,-3.5),(6.4,-3.5),(7.4,-3.5),(8,-3.5),(9.5,-3.5),(9,-3.5),(9.7,-3.5),(10,-3.5)}
    \draw[fill=black] \Point circle (0.03);
    \draw[line width=0.3mm] (4.2,0.8) node{\tiny $2o-1$} (3.9,-2)
    node{\tiny $\lambda_e^1$} (4.2,-2.5)  node{\tiny $\lambda_e^{l(\lambda_e)}$} (11.2,0) node{\tiny $e$} (11.2,-0.35) node{\tiny $1$}
(11.2,-0.65) node{\tiny $1$}  (11.2,-0.85) node{\tiny $1$} (11.2,-1.15) node{\tiny $1$} (9.2,-1.2) node{\tiny $\mu_e^1$} (9.2,-1.9) node{\tiny $\mu_e^{l(\mu_e)-1}$} (3.8,-3) node{\tiny $1$} (10,0.3) node{\tiny $v$};
    \end{tikzpicture}
    \caption{\small zigzag cover of type $(0,(\lambda_e,2o-1,1), (\mu_e,1^{2c}),\undl z_3')$.}
    \label{fig:proper-zig2}
\end{figure}
From Theorem \ref{thm:zigzag1}, there are at least $Z_g((1^{2m-1}),(1^{2m-1});\Lambda_{0,4m+2g-4})$
zigzag covers of type $(g,(1^{2m}), (1^{2m}),\undl z_3'')$.
Note that in this case, the integer $N_0$ given by Theorem \ref{thm:zigzag1} is equal to $1$.
Moreover, every such zigzag cover has a string with two weighted $1$ outward ends that are not part of any symmetric fork.
From Lemma \ref{lem:glue1}, we obtain at least $Z_g((1^{2m-1}),(1^{2m-1});\Lambda_{0,4m+2g-4})$
zigzag covers $\varphi_3:C_3\to T\pb^1$ of type $(g,(\lambda_e,2o-1,1^{2m}), (\mu_e,1^{2m+2c-1}),\undl z_3)$.
\end{enumerate}

We first glue the weighted $1$ outward end in $C_2$ with the weighted 1 inward end $l_1\subset C_1$.
From Lemma \ref{lem:glue1}, we get a generalized zigzag cover $\varphi_1':C_1\to T\pb^1$ of type
$(0,(\lambda',o,o,1^{2c+h-1}),(\mu',2o-1,1^h),\undl z_1')$.
Furthermore, there exists a weighted $2o-1$ outward end in the non-empty subgraph $S_1'\subset C_1'$.
Then we glue the weighted $2o-1$ outward end in $C_1'$ with the weighted $2o-1$ end in the string of $C_3$.
By Lemma \ref{lem:glue1} again, we know that there are at least
$$
|\zl_0((1^{h-n_0}),(1^{h-n_0});\Lambda_{h-n_0-1,0})|\cdot Z_g((1^{2m-1}),(1^{2m-1});\Lambda_{0,4m+2g-4})
$$
generalized zigzag covers of type $(g,(\lambda,1^r),(\mu,1^r);s,t)$, where
$$
\begin{aligned}
r&=2c+h-1+2m, s=\frac{1}{2}(l(\lambda')+l(\mu'))+c+h,\\
t=l(\lambda)+l(\mu)&+2g-2+2r-2s=l(\lambda_e)+l(\mu_e)+2g-2+2c+4m.
\end{aligned}
$$
Note that from Lemma \ref{lem:glue1}, all the glued generalized zigzag covers are properly mixed.

Case $|\lambda_e|+2o-|\mu_e|=-2c<0$:
We consider three pairs of partitions:  $\lambda'$ and $(\mu',1^{2c})$, $(o,o)$ and $(2o-1,1)$,
$(\lambda_e,2o-1,1^{2m+2c})$ and $(\mu_e,1^{2m-1})$.
Denote by $n_0$ the integer obtained from Theorem \ref{thm:asymp1} for the partitions $\lambda'$ and $(\mu',1^{2c})$.
Let $h$ be a positive integer satisfying $h>n_0+3$.
The construction in the first case also gives us three types of tropical covers.
\begin{itemize}
\item According to Theorem \ref{thm:asymp1}, there are at least
$|\zl_0((1^{h-n_0}),(1^{h-n_0});\Lambda_{h-n_0-1,0})|$ generalized zigzag covers
$\varphi_1:C_1\to T\pb^1$ of type $(0,(\lambda',1^{h}), (\mu',1^{2c+h}),\undl z_1)$.
\item A generalized zigzag cover $\varphi_2:C_2\to T\pb^1$ of type $(0,(o,o), (2o-1,1),\undl z_2)$.
\item $Z_g((1^{2m-1}),(1^{2m-1});\Lambda_{0,4m+2g-4})$ zigzag covers $\varphi_3:C_3\to T\pb^1$ of type $(g,(\lambda_e,2o-1,1^{2m+2c}), (\mu_e,1^{2m-1}),\undl z_3)$.
\end{itemize}
Since the rest of the proof is nearly identical to that of the first case, we omit it for brevity.

The case that $-|\lambda_e|-2o+|\mu_e|=2c=0$ can be proved in a similar way,
so we omit the details here.
\end{proof}

\begin{proof}[Proof of Theorem $\ref{coro:main2}$]
From the proof of \cite[Theorem 5.10]{rau2019}, 
the number
$$
h^\cb_{\lambda,\mu}(g,h,m)\leq H^\cb_g(1^{|\lambda|+2c+h+2m-1},1^{|\mu|+2c+h+2m-1}).
$$
Let $c_0=n_0+3$, where $n_0$ is the fixed integer determined in Theorem $\ref{thm:asymp2}$.
Since $\ln x!\sim x\ln x$ as $x\to\infty$,
Theorem $\ref{coro:main2}$ follows from Theorem \ref{thm:asymp2}, Lemma \ref{lem:asymp2}
and equation $(5)$ in \cite{dyz-2017}.
\end{proof}

\begin{proof}[Proof of Theorem $\ref{coro:main3}(1)$]
Let $c_0=n_0+3$, where $n_0$ is the fixed integer determined in Theorem $\ref{thm:asymp2}$.
From Theorem \ref{thm:asymp2},
Lemma \ref{lem:asymp2}, Lemma \ref{lem:limit1}(1)
and equation $(3)$ in \cite{dyz-2017} we know that
if $h>c_0$, $m>1$ are fixed, we have
$$
\liminf_{g\to\infty}\frac{\ln z_{\lambda,\mu}(g,h,m)}{\ln h^\cb_{\lambda,\mu}(g,h,m)}\geq
\frac{\ln(m-1)}{ \ln(\frac{(|\lambda|+2c+h+2m-1)(|\lambda|+2c+h+2m-2)}{2})}.
$$
\end{proof}



\begin{proof}[Proof of Theorem $\ref{thm:main1}$]
From Theorem \ref{thm:asymp2}, Theorem \ref{thm:asymp1}, Lemma \ref{lem:asymp1} and Lemma \ref{lem:asymp2}, the number
$z_{\lambda,\mu}(g,h,m)\geq \frac{(2g)!}{\left((2\left\lfloor\frac{g}{m-1}\right\rfloor+2)!\right)^{m-1}}\cdot(m-1)!^4\cdot\left\lfloor\frac{h}{3}-\frac{n_0+1}{3}\right\rfloor!$.
Let $S(g,h,m)$ be defined in equation $(\ref{eq:asymp-main2})$,
then we have $\frac{\ln z_{\lambda,\mu}(g,h,m)}{2g\ln m+4m\ln m+\frac{h}{3}\ln h}\geq S(g,h,m)$.
From Lemma \ref{lem:limit2}, we obtain that $\lim\limits_{g,h,m\to\infty}S(g,h,m)=1$.
We have completed the proof of Theorem \ref{thm:main1}.
\end{proof}

\appendix
\section{Proofs of two uniform limits}
\label{sec:a}
In this appendix, we prove two lemmas about uniform limits.

\begin{lemma}\label{lem:limit1}
Let $m, g$ be two integers with $m>1$ and $g\geq0$.
Then the following statements are true.
\begin{enumerate}[$(1)$]
\item If $m$ is fixed and $g$ tends to infinity, we have
$$
\ln\left(\frac{(2g)!}{\left((2\left\lfloor\frac{g}{m}\right\rfloor+2)!\right)^{m}}\cdot(m!)^4\right)
\sim 2g\ln m.
$$
\item As $g$ and $m$ both tend to infinity we have
$$
\ln\left(\frac{(2g)!}{\left((2\left\lfloor\frac{g}{m}\right\rfloor+2)!\right)^{m}}\cdot(m!)^4\right)
\sim 2g\ln m+4m\ln m.
$$
\end{enumerate}
\end{lemma}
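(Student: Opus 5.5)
The plan is to set $q=\left\lfloor\frac{g}{m}\right\rfloor$ and $x=2q+2$, and to write
$$
L(g,m)=\ln(2g)!-m\ln x!+4\ln m!.
$$
I will bound $L$ above and below using only the elementary Stirling-type inequalities
$$
n\ln n-n\leq\ln n!\leq n\ln n-n+\ln n+1\qquad(n\geq1),
$$
with the convention $0\ln 0=0$. These bounds handle both regimes at once, since every error term will be explicit in $g$ and $m$. The key arithmetic input is that $q\leq g/m<q+1$ gives $\frac{2g}{m}\leq x\leq\frac{2g}{m}+2$, and therefore $2g\leq mx\leq 2g+2m$.

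\emph{Upper bound.} From $\ln x!\geq x\ln x-x$ and $x\geq 2g/m$ we get
$$
m\ln x!\geq mx\ln x-mx\geq 2g\ln\tfrac{2g}{m}-2g-2m.
$$
Combining this with $\ln(2g)!\leq 2g\ln(2g)-2g+\ln(2g)+1$ gives
$$
L\leq 2g\ln m+4\ln m!+2m+\ln(2g)+1\leq 2g\ln m+4m\ln m+O(m+\ln g).
$$
\emph{Lower bound.} Use $\ln(2g)!\geq 2g\ln(2g)-2g$, and bound $m\ln x!$ above by $m(x\ln x-x+\ln x+1)$. By the bounds on $x$ and $mx$ this is at most
$$
(2g+2m)\ln\!\left(\tfrac{2g}{m}+2\right)-2g+m\ln\!\left(\tfrac{2g}{m}+2\right)+m.
$$
Next split $\ln(\frac{2g}{m}+2)=\ln\frac{2g}{m}+\ln(1+\frac{m}{g})$ and use $2g\ln(1+\frac{m}{g})\leq 2m$. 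This gives
$$
L\geq 2g\ln m+4\ln m!-3m\ln\!\left(\tfrac{2g}{m}+2\right)-3m.
$$
For $g=0$ the factor $(2g)!$ is $1$ and the estimate is immediate, so I may assume $g\geq1$.

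Part (1) now follows directly. With $m$ fixed, both bounds read $2g\ln m+O(\ln g)$, and $\ln g=o(g)$.

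For part (2), with $m,g\to\infty$, I use $4\ln m!=4m\ln m+O(m)$. It remains to show that the errors $O(m)$, $O(\ln g)$ and $3m\ln(\frac{2g}{m}+2)$ are all $o(2g\ln m+4m\ln m)$, uniformly in how $g$ and $m$ grow. The first two are clear. The term $3m\ln(\frac{2g}{m}+2)$ is the one real obstacle, because it is only $O(m\ln m)$ when $g\approx m^2$. To handle it, set $\tau=g/m$. The ratio of this term to $m(2\tau+4)\ln m$ equals
$$
\frac{3\ln(2\tau+2)}{(2\tau+4)\ln m}\leq\frac{C}{\ln m},
$$
because $\ln(2\tau+2)/(2\tau+4)$ is bounded on $\tau\geq0$. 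This tends to $0$ uniformly in $\tau$, so the upper and lower bounds both equal $(1+o(1))(2g\ln m+4m\ln m)$, which proves (2).
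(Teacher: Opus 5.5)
Your proof is correct, and it takes a genuinely different route from the paper.

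The paper proves (1) by substituting the asymptotic Stirling formula $\ln n!=(n+\frac12)\ln n-n+\frac12\ln 2\pi+o(1)$, writing $\lfloor g/m\rfloor+1=\frac{g}{m}+\theta$ with $0<\theta\le1$, and computing the limit directly. For (2) it argues by contradiction. It extracts sequences $(g_i,m_i)$ along which the ratio stays $\delta$-away from $1$, then treats three regimes separately: $g_i/m_i\to\infty$, $g_i/m_i\to b\in(0,\infty)$, and $g_i/m_i\to0$ (where eventually $\lfloor g_i/m_i\rfloor=0$).

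You instead use non-asymptotic bounds on $\ln n!$. Together with $\ln m!\ge m\ln m-m$, they give, for all $g\ge1$ and $m\ge2$,
\[
-3m\ln\!\left(\tfrac{2g}{m}+2\right)-7m\;\le\;L-(2g\ln m+4m\ln m)\;\le\;2m+\ln(2g)+1 .
\]
The single observation that $\ln(2\tau+2)/(2\tau+4)$ is bounded on $\tau\ge0$ then replaces the whole three-case analysis.

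What your approach buys:
\begin{itemize}
\item Both (1) and (2) come out of one inequality.
\item The relative error in (2) is explicitly $O(1/\ln m)$, uniformly in $g\ge1$, so the joint limit needs no subsequence extraction.
\item The same bookkeeping carries over directly to Lemma~\ref{lem:limit2}, which just adds one more factorial with its own explicit bounds.
\end{itemize}
The paper's regime-by-regime computation is less quantitative, but it shows which of $2g\ln m$ and $4m\ln m$ dominates in each regime.

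A minor point worth stating: in your upper bound, the inequality $mx\ln x\ge 2g\ln\frac{2g}{m}$ is trivial when $2g<m$. In that case the right-hand side is negative, while $x\ge2$ makes the left-hand side positive.
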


\begin{proof}
It is easy to see that
$$
\ln\left(\frac{(2g)!}{\left((2\left\lfloor\frac{g}{m}\right\rfloor+2)!\right)^{m}}\cdot(m!)^4\right)
=\ln (2g)!+4\ln m!-m\ln\left (2\left\lfloor\frac{g}{m}\right\rfloor+2\right)!.
$$
Note that $\frac{g}{m}+\theta=\left\lfloor\frac{g}{m}\right\rfloor+1$, where $0<\theta\leq1$.
From the Stirling's formula (see equation (1.4) on page 2 of \cite{wong-2001} for example) we know
$$
\ln n!=(n+\frac{1}{2})\ln n-n+\frac{1}{2}\ln 2\pi +o(1).
$$
Then
$$
\begin{aligned}
&\lim_{g\to\infty}\frac{\ln\left(\frac{(2g)!}{\left((2\left\lfloor\frac{g}{m}\right\rfloor+2)!\right)^{m}}\cdot(m!)^4\right)}{2g\ln m}\\
=&\lim_{g\to\infty}\frac{(2g+\frac{1}{2})\ln (2g)-2g-m\left (2\left\lfloor\frac{g}{m}\right\rfloor+2+\frac{1}{2}\right)\ln\left (2\left\lfloor\frac{g}{m}\right\rfloor+2\right)+m\left (2\left\lfloor\frac{g}{m}\right\rfloor+2\right)+O(1)}{2g\ln m}\\
=&\lim_{g\to\infty}\frac{2g\ln (2g)-(2g+2m\theta)\ln\left (\frac{2g}{m}+2\theta\right)+\frac{1}{2}\ln(2g)-\frac{m}{2}\ln\left (\frac{2g}{m}+2\theta\right)+2m\theta+O(1)}{2g\ln m}\\
=&\lim_{g\to\infty}\frac{2g\ln (2g)-2g\ln\left (\frac{2g}{m}+2\theta\right)}{2g\ln m}+0\\
=&1+\lim_{g\to\infty}\frac{\ln \frac{2g}{m}-\ln\left (\frac{2g}{m}+2\theta\right)}{\ln m}\\
=&1+\lim_{g\to\infty}\frac{\ln \frac{1}{1+\frac{2m\theta}{2g}}}{\ln m}\\
=&1.
\end{aligned}
$$
We have completed the proof of the statement (1).

Suppose that the conclusion in statement (2) is not right. 
Then there exist a positive constant $\delta$ and two sequences $\{g_i\}_{i=1}^\infty$, $\{m_i\}_{i=1}^\infty$ 
such that
$\lim\limits_{i\rightarrow\infty}g_i=\lim\limits_{i\rightarrow\infty}m_i=\infty$ and
$$
\left|\frac{\ln S(g_i,m_i)}{2g_i\ln m_i+4m_i\ln m_i}-1\right|\geq\delta,~ \forall i\in\nb^+ ,
$$
where $S(g_i,m_i)=\frac{(2g_i)!}{\left((2\left\lfloor\frac{g_i}{m_i}\right\rfloor+2)!\right)^{m_i}}\cdot(m_i!)^4$.

Then we have
$$
\begin{aligned}
\frac{\ln S(g_i,m_i)}{2g_i\ln m_i+4m_i\ln m_i}
=&\frac{\ln (2g_i)!+4\ln m_i!-m_i\ln\left (2\left\lfloor\frac{g_i}{m_i}\right\rfloor+2\right)!}{2g_i\ln m_i+4m_i\ln m_i}\\
=&\frac{\frac{\ln (2g_i)!}{2g_i\ln m_i}+\frac{4\ln m_i!}{4m_i\ln m_i}\frac{4m_i}{2g_i}
-\frac{m_i\ln\left(2\left\lfloor\frac{g_i}{m_i}\right\rfloor+2\right)!}{2g_i\ln m_i}}{1+\frac{4m_i}{2g_i}}.
\end{aligned}
$$

If $\{\frac{g_i}{m_i}\}_{i=1}^\infty$ is unbounded, up to a subsequence, 
we may assume that $\lim\limits_{i\rightarrow\infty}\frac{g_i}{m_i}=\infty$.
Then we obtain
$$
\begin{aligned}
&\lim_{i\to\infty}\frac{\ln S(g_i,m_i)}{2g_i\ln m_i+4m_i\ln m_i}\\
=&\lim_{i\to\infty}\left(\frac{\ln (2g_i)!}{2g_i\ln m_i}+\frac{4\ln m_i!}{4m_i\ln m_i}\frac{4m_i}{2g_i}
-\frac{m_i\ln\left(2\left\lfloor\frac{g_i}{m_i}\right\rfloor+2\right)!}{2g_i\ln m_i}\right)\\
=&\lim_{i\to\infty}\frac{\ln (2g_i)!-m_i\ln \left(2\left\lfloor\frac{g_i}{m_i}\right\rfloor+2\right)!}{2g_i\ln m_i}.
\end{aligned}
$$
From the Stirling's formula we obtain that
$$
\begin{aligned}
&\lim_{i\to\infty}\frac{\ln S(g_i,m_i)}{2g_i\ln m_i+4m_i\ln m_i}\\
=&\lim_{i\to\infty}\frac{(2g_i+\frac{1}{2})\ln (2g_i)-m_i\left(2\left\lfloor\frac{g_i}{m_i}\right\rfloor+2+\frac{1}{2}\right)
\ln\left(2\left\lfloor\frac{g_i}{m_i}\right\rfloor+2\right)+O(g_i)+O(\left\lfloor\frac{g_i}{m_i}\right\rfloor)}{2g_i\ln m_i}\\
=&\lim_{i\to\infty}\frac{2g_i\ln (2g_i)-(2m_i\left\lfloor\frac{g_i}{m_i}\right\rfloor+2m_i)\ln\left(2\left\lfloor\frac{g_i}{m_i}\right\rfloor+2\right)}{2g_i\ln m_i}.
\end{aligned}
$$
Note that $\frac{g_i}{m_i}+\theta_i=\left\lfloor\frac{g_i}{m_i}\right\rfloor+1$, where $0<\theta_i\leq1$.
Then
$$
\begin{aligned}
&\lim_{i\to\infty}\frac{\ln S(g_i,m_i)}{2g_i\ln m_i+4m_i\ln m_i}\\
=&\lim_{i\to\infty}\frac{2g_i\ln (2g_i)-(2g_i+2m_i\theta_i)\ln\left (\frac{2g_i}{m_i}+2\theta_i\right)}{2g_i\ln m_i}\\
=&\lim_{i\to\infty}\left(\frac{2g_i\ln (2g_i)-2g_i\ln\left (\frac{2g_i}{m_i}+2\theta_i\right)}{2g_i\ln m_i}+\frac{2m_i\theta_i\ln\left (\frac{2g_i}{m_i}+2\theta_i\right)}{2g_i\ln m_i}\right)\\
=&1+\lim_{i\to\infty}\frac{\theta_i\ln\left (\frac{2g_i}{m_i}+2\theta_i\right)}{\frac{g_i}{m_i}\ln m_i}\\
=&1.
\end{aligned}
$$
This is a contradiction.

If $\{\frac{g_i}{m_i}\}_{i=1}^\infty$ is bounded, then up to a subsequence, 
we may assume that $\lim\limits_{i\rightarrow\infty}\frac{g_i}{m_i}=b$ for some $b\in[0,\infty)$.
We first consider the case that $b\in(0,\infty)$. In this case,
we have $\lim\limits_{i\to\infty}\frac{\ln g_i}{\ln m_i}=1$ and
$
\lim\limits_{i\to\infty}\left(\frac{m_i\ln (2\left\lfloor\frac{g_i}{m_i}\right\rfloor+2)!}{2g_i\ln m_i}\right)=0.
$
Then 
$$
\lim_{i\to\infty}\frac{\ln S(g_i,m_i)}{2g_i\ln m_i+4m_i\ln m_i}
=\lim_{i\to\infty}\frac{\frac{\ln (2g_i)!}{2g_i\ln g_i}\frac{\ln g_i}{\ln m_i}+\frac{4\ln m_i!}{4m_i\ln m_i}\frac{4m_i}{2g_i}}{1+\frac{4m_i}{2g_i}}=1.
$$
The last equality follows from the fact that $\ln x!\sim x\ln x$ when $x\to\infty$.
This is a contradiction.


Now we consider the case $b=0$. 
Up to a subsequence, 
we may assume $g_i<m_i$.
Then $S(g_i,m_i)=\frac{(2g_i)!}{2^{m_i}}\cdot (m_i!)^4$.
A direct calculation shows that
$$
\begin{aligned}
\lim_{i\to\infty}\frac{\ln S(g_i,m_i)}{2g_i\ln m_i+4m_i\ln m_i}&=\lim_{i\to\infty}\frac{\ln (2g_i)!+4\ln m_i!-m_i\ln2}{2g_i\ln m_i+4m_i\ln m_i}\\
&=\lim_{i\to\infty}\frac{\frac{\ln (2g_i)!}{2g_i\ln g_i}\frac{\ln g_i}{\ln m_i}\frac{2g_i}{4m_i}+\frac{4\ln m_i!}{4m_i\ln m_i}-\frac{m_i\ln2}{4m_i\ln m_i}}{\frac{2g_i}{4m_i}+1}=1.
\end{aligned}
$$
This is a contradiction.
Therefore, we have completed the proof of statement (2).
\end{proof}

\begin{lemma}\label{lem:limit2}
Let
\begin{equation}\label{eq:asymp-main2}
S(g,h,m):=\frac{\ln (2g)!+4\ln(m-1)!-(m-1)\ln (2\left\lfloor\frac{g}{m-1}\right\rfloor+2)!+\ln\left\lfloor\frac{h}{3}-\frac{n_0+1}{3}\right\rfloor!}{2g\ln m+4m\ln m+\frac{h}{3}\ln h}.
\end{equation}
Then $\lim\limits_{g,h,m\to\infty}S(g,h,m)=1$.
\end{lemma}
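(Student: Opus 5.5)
We must show $S(g,h,m)\to1$ as $g,h,m\to\infty$ jointly. Write the numerator as $A(g,m)+B(h)$ with
$$A(g,m)=\ln (2g)!+4\ln(m-1)!-(m-1)\ln \Bigl(2\Bigl\lfloor\tfrac{g}{m-1}\Bigr\rfloor+2\Bigr)!,\qquad B(h)=\ln\Bigl\lfloor\tfrac{h}{3}-\tfrac{n_0+1}{3}\Bigr\rfloor!,$$
and the denominator as $D_1(g,m)+D_2(h)$ with $D_1=2g\ln m+4m\ln m$ and $D_2=\frac h3\ln h$. Everything rests on the elementary fact that if $a_i/b_i\to1$ with $b_i>0$, then $(a_1+a_2)/(b_1+b_2)\to1$: indeed $|a_1+a_2-b_1-b_2|\le \epsilon(b_1+b_2)$ once each $|a_j-b_j|\le\epsilon b_j$. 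So it suffices to prove separately that $A(g,m)/D_1(g,m)\to1$ as $g,m\to\infty$ and $B(h)/D_2(h)\to1$ as $h\to\infty$.

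For $B$, put $x=\lfloor h/3-(n_0+1)/3\rfloor$. Stirling gives $\ln x!\sim x\ln x$, and $x\sim h/3$ with $\ln x=\ln h-\ln 3+o(1)\sim\ln h$, so $B(h)\sim\frac h3\ln h$.

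For $A$, set $m'=m-1$, so that $A(g,m)$ is exactly the logarithm of the quantity in Lemma \ref{lem:limit1} evaluated at $(g,m')$. Lemma \ref{lem:limit1}(2) then gives $A(g,m)\sim 2g\ln m'+4m'\ln m'$ as $g,m'\to\infty$. It remains to compare $2g\ln(m-1)+4(m-1)\ln(m-1)$ with $2g\ln m+4m\ln m$. Their ratio tends to $1$ uniformly in $g$, because $\ln(m-1)/\ln m\to1$ and $(m-1)/m\to1$, and the ratio of two positive combinations lies between the minimum and maximum of the termwise ratios.

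The only subtle point is uniformity, since the limit is joint in $(g,h,m)$ with no relation among the variables. The splitting above handles this: the fact about $(a_1+a_2)/(b_1+b_2)$ needs only each piece's ratio to be within $\epsilon$ of $1$, and this holds once $h$ and $\min(g,m)$ exceed a threshold. Lemma \ref{lem:limit1}(2) already supplies the uniformity in $(g,m)$, since it is proved by the subsequence argument over all joint sequences. If one worries about that lemma's boundary case $g<m$, the same subsequence contradiction argument can be rerun directly for $S$, splitting into the cases $g_i/m_i\to\infty$, $g_i/m_i\to b\in(0,\infty)$ and $g_i/m_i\to 0$ exactly as before.
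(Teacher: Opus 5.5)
Your proof is correct, but it is organized differently from the paper's. The paper argues by contradiction along sequences $(g_i,h_i,m_i)$ and rewrites
\[
S(g_i,h_i,m_i)=\frac{S(g_i,m_i)-a(h_i)-\frac{4m_i}{2g_i}a(h_i)}{1+\frac{4m_i}{2g_i}+b(h_i,m_i)\frac{m_i}{g_i}}+a(h_i),
\]
with $a(h)=B(h)/D_2(h)$. It then splits into the three regimes $g_i/m_i\to\infty$, $g_i/m_i\to c\in(0,\infty)$ and $g_i/m_i\to 0$. In each regime it re-runs the Stirling estimates from the proof of Lemma~\ref{lem:limit1}, normalizing by $4m\ln m$ in the last one and absorbing the shift $m\mapsto m-1$ implicitly.

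That identity is exactly your weighted average in disguise: $S$ is the mediant of $A/D_1$ and $B/D_2$ with weights $D_1$ and $D_2$. You state the bound $\bigl|\frac{a_1+a_2}{b_1+b_2}-1\bigr|\le\max_j\bigl|\frac{a_j}{b_j}-1\bigr|$ once. This decouples $h$ from $(g,m)$, so you can cite Lemma~\ref{lem:limit1}(2) as a black box at $m'=m-1$ and then compare $2g\ln(m-1)+4(m-1)\ln(m-1)$ with $2g\ln m+4m\ln m$. No case analysis on $g/m$ is needed, and the uniformity of the joint limit is transparent.

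The paper's route re-verifies the estimates directly in each regime, at the cost of length and of repeating the case split. Your version shows the lemma is an immediate corollary of Lemma~\ref{lem:limit1}(2).

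Your fallback remark about the regime $g<m$ is unnecessary. Lemma~\ref{lem:limit1}(2) is a genuine joint limit whose proof covers all three regimes, including $g_i/m_i\to0$, so citing it suffices.
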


\begin{proof}
The proof of Lemma \ref{lem:limit2} is similar to that of Lemma \ref{lem:limit1}(2).
Suppose that the conclusion is not right. Then there exist a positive constant $\delta$ 
and three sequences $\{g_i\}_{i=1}^\infty$, $\{m_i\}_{i=1}^\infty$, $\{h_i\}_{i=1}^\infty$ 
such that
$\lim\limits_{i\rightarrow\infty}g_i=\lim\limits_{i\rightarrow\infty}m_i=\lim\limits_{i\rightarrow\infty}h_i=\infty$ and
$$
|S(g_i,h_i,m_i)-1|\geq\delta,~ \forall i\in\nb^+.
$$
We put 
$$
\begin{aligned}
&a(h)=\frac{\ln\left\lfloor\frac{h}{3}-\frac{n_0+1}{3}\right\rfloor!}{\frac{h}{3}\ln h},~~
b(h,m)=\frac{\frac{h}{3}\ln h}{2m\ln m},\\
S(g,m)=&\frac{\ln (2g)!}{2g\ln g}\frac{\ln g}{\ln m}+\frac{4\ln(m-1)!}{4m\ln m}\frac{4m}{2g}
-\frac{(m-1)\ln (2\left\lfloor\frac{g}{m-1}\right\rfloor+2)!}{2g\ln m}.
\end{aligned}
$$
Then 
$$
S(g_i,h_i,m_i)=\frac{S(g_i,m_i)-a(h_i)-\frac{4m_i}{2g_i}a(h_i)}{1+\frac{4m_i}{2g_i}+b(h_i,m_i)\frac{m_i}{g_i}}+a(h_i).
$$

If $\{\frac{g_i}{m_i}\}_{i=1}^\infty$ is unbounded, up to a subsequence, 
we may assume that $\lim\limits_{i\rightarrow\infty}\frac{g_i}{m_i}=\infty$.
We have
$$
\begin{aligned}
&\lim_{g_i,h_i,m_i\to\infty}(S(g_i,m_i)-a(h_i)-\frac{4m_i}{2g_i}a(h_i))\\
=&\lim_{g_i,h_i,m_i\to\infty}\left(\frac{\ln (2g_i)!}{2g_i\ln m_i}-\frac{(m_i-1)\ln (2\left\lfloor\frac{g_i}{m_i-1}\right\rfloor+2)!}{2g_i\ln m_i}\right)-1.
\end{aligned}
$$
From the calculation in the proof of Lemma \ref{lem:limit1}, we know that 
$$
\lim\limits_{g_i,h_i,m_i\to\infty}(S(g_i,m_i)-a(h_i)-\frac{4m_i}{2g_i}a(h_i))=0.
$$
Since $1+\frac{4m_i}{2g_i}+b(h_i,m_i)\frac{m_i}{g_i}\geq1$,
we get 
$$
\lim_{g_i,h_i,m_i\to\infty}S(g_i,h_i,m_i)=
\lim_{g_i,h_i,m_i\to\infty}\frac{S(g_i,m_i)-a(h_i)-\frac{4m_i}{2g_i}a(h_i)}{1+\frac{4m_i}{2g_i}+b(h_i,m_i)\frac{m_i}{g_i}}+1=1.
$$
This is a contradiction.

If $\{\frac{g_i}{m_i}\}_{i=1}^\infty$ is bounded, up to a subsequence, 
we may assume that $\lim\limits_{i\rightarrow\infty}\frac{g_i}{m_i}=c\in[0,\infty)$.
In the case that $c\neq0$, one has $\lim\limits_{i\rightarrow\infty}\frac{\ln g_i}{\ln m_i}=1$.
Hence, we obtain
$$
\lim_{g_i,h_i,m_i\to\infty}(S(g_i,m_i)-a(h_i)-\frac{4m_i}{2g_i}a(h_i))
=1+\frac{2}{c}-0-1-\frac{2}{c}=0.
$$
It follows from $1+\frac{4m_i}{2g_i}+b(h_i,m_i)\frac{m_i}{g_i}\geq1$ that 
$\lim\limits_{g_i,h_i,m_i\to\infty}S(g_i,h_i,m_i)=1$. This is a contradiction.

In the case $c=0$, up to a subsequence, we may assume that $g_i<m_i-1$ for all $i$.
Then 
$$
\begin{aligned}
S(g_i,h_i,m_i)=&\frac{\ln (2g_i)!+4\ln(m_i-1)!-(m_i-1)\ln 2+\ln\left\lfloor\frac{h_i}{3}-\frac{n_0+1}{3}\right\rfloor!}{2g_i\ln m+4m_i\ln m+\frac{h_i}{3}\ln h_i}\\
=&\frac{\frac{\ln(2g_i)!}{2g_i\ln g_i}\frac{2g_i\ln g_i}{4m_i\ln m_i}+\frac{4\ln (m_i-1)!}{4m_i\ln m_i}
-\frac{(m_i-1)\ln 2}{4m_i\ln m_i}+\frac{\ln\left\lfloor\frac{h_i}{3}-\frac{n_0+1}{3}\right\rfloor!}{\frac{h_i}{3}\ln h_i}\frac{\frac{h_i}{3}\ln h_i}{4m_i\ln m_i}}{\frac{\frac{h_i}{3}\ln h_i}{4m_i\ln m_i}+1+\frac{2g_i\ln m_i}{4m_i\ln m_i}}.
\end{aligned}
$$
Let 
$$
\begin{aligned}
a(h_i)=&\frac{\ln\left\lfloor\frac{h_i}{3}-\frac{n_0+1}{3}\right\rfloor!}{\frac{h_i}{3}\ln h_i},~~
b(h_i,m_i)=\frac{\frac{h_i}{3}\ln h_i}{4m_i\ln m_i}\\
S(g_i,m_i)=&\frac{\ln(2g_i)!}{2g_i\ln g_i}\frac{2g_i\ln g_i}{4m_i\ln m_i}+\frac{4\ln (m_i-1)!}{4m_i\ln m_i}
-\frac{(m_i-1)\ln 2}{4m_i\ln m_i}.
\end{aligned}
$$
Then
$$
S(g_i,h_i,m_i)=\frac{S(g_i,m_i)-a(h_i)-\frac{g_i}{2m_i}a(h_i)}{1+\frac{g_i}{2m_i}+b(h_i,m_i)}+a(h_i).
$$
Note that
$$
\lim_{g_i,h_i,m_i\to\infty}(S(g_i,m_i)-a(h_i)-\frac{g_i}{2m_i}a(h_i))=0+1-0-1-0=0.
$$
We obtain that $\lim\limits_{g_i,h_i,m_i\to\infty}S(g_i,h_i,m_i)=1$ from $1+\frac{g_i}{2m_i}+b(h_i,m_i)\geq1$.
This is again a contradiction. We have completed the proof.
\end{proof}


\section*{Acknowledgements}
The first author is grateful to Ilia Itenberg for
pointing out the problem of asymptotic behavior of real double Hurwitz numbers
when non-simple branch points are added, during his visit to IMJ-PRG in 2020.
The authors thank Di Yang for suggesting the problem of uniform asymptotics of double Hurwitz numbers,
as well as for valuable discussions and suggestions.
The authors would like to thank Chenglang Yang and Zhiyuan Wang for their helpful discussions,
and they also thank Jianfeng Wu for carefully reading the manuscript.
Y. Ding was supported by the National Natural Science Foundation of China (No.12101565), and
the Natural Science Foundation of Henan (No. 212300410287).
H. Liu was supported by the National Natural Science Foundation of China (No.12171439).


\end{document}